\def\e#1\e{\begin{equation}#1\end{equation}}
\def\ea#1\ea{\begin{align}#1\end{align}}
\newtheorem{thm}{Theorem}[section]
\newtheorem{lem}[thm]{Lemma}
\newtheorem{prop}[thm]{Proposition}
\newtheorem{cor}[thm]{Corollary}
\newenvironment{dfn}{\medskip\refstepcounter{thm}
\noindent{\bf Definition \thesection.\arabic{thm}\ }}{\medskip}
\newenvironment{proof}[1][,]{\smallskip\ifcat,#1
\noindent{\it Proof.\ }\else\noindent{\it Proof of #1.\ }\fi}
{\relax\unskip\nobreak ~\hfill$\square$\bigskip}
\def\narrow{\par\addtolength{\leftskip}{20pt}\addtolength{\rightskip}{5pt}}
\def\oldmargins{\par\setlength{\leftskip}{0pt}\setlength{\rightskip}{0pt}}
\newcounter{quest}[section]
\newcounter{qa}[quest]
\newcounter{qi}[quest]
\def\inext{\par\ifnum\value{qi}=0 \narrow\fi \medskip\addtocounter{qi}{1}
\noindent\hbox to 0pt{\hss\bf(\roman{qi})\hskip .3em}}
\def\anext{\par\ifnum\value{qa}=0 \narrow\fi \medskip\addtocounter{qa}{1}
\noindent\hbox to 0pt{\hss\bf(\alph{qa})\hskip .3em}}
\numberwithin{equation}{section}
\newcommand{\be}{\begin{equation}}
\newcommand{\ee}{\end{equation}}
\newcommand{\bes}{\begin{equation*}}
\newcommand{\ees}{\end{equation*}}
\newcommand{\bea}{\begin{eqnarray}}
\newcommand{\eea}{\end{eqnarray}}
\def\w{\wedge}
\def\bw{\bigwedge}
\def\pa{\partial}
\def\bpa{{\bar \partial}}
\def\dpdl{d+\dl}
\def\ds{d^*}
\def\dl{d^\Lambda}
\def\dls{d^\Lambda{}^*}
\def\ss{*_s}
\def\hB{{\widehat B}}
\def\CA{\Omega}
\def\CB{\mathcal{P}}
\def\CH{\mathcal{H}}
\def\CJ{\mathcal{J}}
\def\CL{\mathcal{L}}
\def\A{\mathcal{A}}
\def\La{\Lambda}
\def\a{\alpha}
\def\b{\beta}
\def\om{\omega}
\def\s{\sigma}
\def\im{{\rm im~}}
\def\ea{e_1}
\def\eb{e_2}
\def\ec{e_3}
\def\ed{e_4}
\def\eee{e_5}
\def\ef{e_6}
\def\ars{a_{r,l}}
\def\dpp{{\partial_+}}
\def\dpm{{\partial_-}}
\def\dppm{{\partial_{\pm}}}
\def\dpps{{\partial_{+}^*}}
\def\dpms{{\partial_{-}^*}}
\def\tA{{\tilde A}}
\def\tB{{\tilde B}}
\begin{document}



\title{\bf{Cohomology and Hodge Theory on\\
Symplectic Manifolds: II}}

\author{Li-Sheng Tseng and Shing-Tung Yau \\
\\
}

\date{November 4, 2010}

\maketitle

\begin{abstract} \

We show that the exterior derivative operator on a symplectic manifold has a natural decomposition into two linear differential operators, analogous to the Dolbeault operators in complex geometry.  These operators map primitive forms into primitive forms and therefore lead directly to the construction of primitive cohomologies on symplectic manifolds.  Using these operators, we introduce new primitive cohomologies that are analogous to the Dolbeault cohomology in the complex theory.  Interestingly, the finiteness of these primitive cohomologies follows directly from an elliptic complex.  We calculate the known primitive cohomologies on a nilmanifold and show that their dimensions can vary with the class of the symplectic form.

\end{abstract}

\tableofcontents 

\setcounter{equation}{0}
\setcounter{footnote}{0}

\section{Introduction}

This paper continues the study of differential cohomologies on smooth compact symplectic manifolds that we began in Paper I \cite{TY1}.  There, we introduced a number of new finite-dimensional cohomologies defined on the space of differential forms.  These new cohomologies, dependent on the symplectic form, were shown in general to be distinct from the de Rham cohomolgy, and thus they provide new symplectic invariants.  Of particular interest for us here is the property noted in \cite{TY1} that the new symplectic cohomologies can be equivalently described by cohomologies defined only on the subset of differential forms called primitive forms.   We called this type of cohomologies ``primitive cohomologies" and they are the main focus of this paper.

The fundamental nature of primitive cohomologies in symplectic geometry can be understood simply.  Let us explain this via an analogy with complex geometry.  

On a complex space, it is standard to decompose differential forms into its $(p,q)$ components.  Let $\A^{p,q}$ be the space of smooth differential $(p,q)$-forms.  Then acting on it by the exterior derivative $d$, we have 
\be\label{ade}
d:~ \A^{p,q} ~ \to ~~\A^{p+1,q} \oplus ~\A^{p,q+1}~, 
\ee
which of course encodes the complex decomposition, $d=\pa + \bpa\,$, where for instance the Dolbeault operator $\bpa: \A^{p,q} \to \A^{p,q+1}$ projects $d (\A^{p,q})$ onto its $(p,q+1)$ component.  Thus, on a complex space, it is natural to decompose both the differential forms and exterior derivative in a complex structure dependent way.   This raises the question in the symplectic context whether a symplectic structure dependent decomposition of differential forms and the exterior derivative are also possible.

For differential forms, the decomposition in the presence of a symplectic form $\om$ is well known \cite{Weil, LM}.  This is commonly called the Lefschetz decomposition.   The elemental components, which we shall label by two indices, $(r,s)$, take the form $\frac{1}{r!}\,\om^r \w B_s\,$, where $B_s\in \CB^s$ is a primitive $s$-form.  Recall that by definition, a primitive form satisfies $\Lambda \,B_s := \dfrac{1}{2}(\om^{-1})^{ij}\,i_{\pa_{x^i}} i_{\pa_{x^j}} B_s =0\,$.  
We shall denote the space of such $(r,s)$-forms by $\CL^{r,s}$ with $0\leq r,s \leq n $ for a symplectic space of dimension $d=2n\,$. In Fig. \ref{sympr}, we have arranged the different $\CL^{r,s}${}'s into a pyramid diagram, representing the symplectic analog of the complex $(p,q)$ diamond.

\begin{figure}
$$\begin{matrix}
 & & & & \CL^{0,4}& & & & \\
 & & &\CL^{0,3}& &\CL^{1,3} & & & \\
 & &\CL^{0,2}& &\CL^{1,2}& &\CL^{2,2} & & \\
 & \CL^{0,1} & & \CL^{1,1} & &\CL^{2,1}& &\CL^{3,1} & \\
\CL^{0,0}& &\CL^{1,0}& &\CL^{2,0}& &\CL^{3,0}& &\CL^{4,0}\\
\end{matrix}$$
\caption{The $(r,s)$ pyramid decomposition of differential forms in $d=8$.  The differential forms of $\CL^{r,s}$ has degree $(2r + s)$.  In the diagram, the degree increases in increment of one, from zero to $2n$ (left to right).}
\label{sympr}
\end{figure}

What may be a bit surprising is that a symplectic decomposition of the exterior derivative is also possible.   (As far as we are aware, this has not been previously discussed in the literature.)  Consider simply the action of $d$ on $\CL^{r,s}$.  Since $d\,\om =0\,$, we have $d\left(\frac{1}{r!}\,\om^r \w B_s\right)= \frac{1}{r!}\,\om^r \w (d B_s)\,$.  By this simple relation, we see clearly that the action of the exterior derivative on $\CL^{r,s}$ is entirely determined by its action on the primitive part, i.e. $B_s\,$.  And regarding the derivative of a primitive form, there is a useful formula (see, e.g., \cite{Huy, TY1} or Section 2.2 below \eqref{dprim}): 
\be\label{dbform}
dB_s = B^0_{s+1} + \om \w B^1_{s-1}~,
\ee
where $B^0, B^1 \in \CB^*\,$, and if $s=n$, then $B_{n+1}^0 = 0$.  Taking the exterior product of \eqref{dbform} with $\frac{1}{r!}\, \om^r$, we find 
\be \label{clde}
d: \CL^{r,s} \to \CL^{r,s+1} \oplus \CL^{r+1,s-1}~,
\ee 
which gives us the symplectic analog of \eqref{ade}.  

The symplectic decomposition of the exterior derivative becomes now just a projection onto the two different spaces on the right hand side of \eqref{clde}.  But as already mentioned, the derivative action on $\CL^{r,s}$ is completely encoded in its action on the primitive forms; thus, we really only need to consider the primitive components, $\CL^{0,s}=\CB^s$.  (A complete discussion taking into account of all $\CL^{r,s}$ will be given in Section 2.)  With \eqref{dbform}, we are led to write the decomposition of $d$ as follows: 
\be\label{ddecomp1}
d = \dpp \,+\; \om \w \dpm~,
\ee
where $\dppm: \CB^s \to \CB^{s\pm 1}$.  Hence, we have seen the importance of the primitive subspace of differential forms and have defined a pair of new first-order symplectic differential operators $(\dpp,\dpm)$ that preserve the primitive property of forms.   

Just like $(\pa,\bpa)$ on a complex space,  $(\dpp,\dpm)$ has a number of desirable properties that follow directly from their definition in \eqref{ddecomp1}.  In fact, it follows from $d^2=0$ and the two decompositions - Lefschetz and the exterior derivative \eqref{ddecomp1} - that both $\dpp$ and $\dpm$ square to zero and effectively anticommute with each other (see Lemma \ref{pmprop}).  These facts suggest defining on a symplectic manifold $(M^{2n}, \om)$ the following two cohomologies:
\be\label{ppcoh}
PH^k_\dpp(M) = \frac{\ker \dpp \cap \CB^k(M)}{\im \dpp \cap \CB^k(M)}~,
\ee
\be\label{pmcoh}
PH^k_\dpm(M) = \frac{\ker \dpm \cap \CB^k(M)}{\im \dpm \cap \CB^k(M)}~,
\ee
for $k< n\,$.  The two cohomologies are not well defined for $k=n$, since, by the definition of primitivity, there are no degree $n+1$ primitive forms.  Note that these new cohomologies $PH^*_\dpp(M)$ and $PH^*_\dpm(M)$ are very different from the de Rham cohomology.  For instance, a $ \dpp$-closed form may not be $d$-closed, and moreover, $d\,\dpp = \om \w (\dpm\dpp)\,$, which is not identically zero.  Nevertheless, we will show that the two cohomologies above are indeed finite dimensional on a compact manifold and in general they give new symplectic invariants.  Interestingly, the finiteness follows directly by associating the two cohomologies with the single differential complex, 
\be\label{ecomp}   
\xymatrix@R=20pt@C=20pt{ 0~\ar[r] & ~ \CB^{0} ~ \ar[r]^{\dpp} &~ \CB^{1} ~ \ar[r]^{\dpp} &~ ~\ldots~ ~\ar[r]^{\dpp}&~ \CB^{n-1}~ \ar[r]^{\dpp} &~ \CB^{n}~\ar[r]^{\dpp\dpm}&~\CB^{n}~\ar[r]^{\dpm} & {~~~~} & \\
 & & & {~~~~~}\ar@{^{(}->}[r]^\dpm& ~ \CB^{n-1}~\ar[r]^{\dpm}& ~~\ldots~~\ar[r]^{\dpm}&~ \CB^{1} ~ \ar[r]^{\dpm} &~ \CB^{0} ~ \ar[r]^{\dpm} &~ 0  }
\ee
which we will prove is elliptic (in Section 2, Proposition \ref{ellipticsc}).  This elliptic complex can be thought of as the symplectic analog of the Dolbeault complex.  Now, if we introduce a Riemannian metric, the elliptic complex implies that the second-order Laplacians, $\Delta_{\dppm}\,$, associated with $PH^*_\dppm(M)\,$ are also elliptic, and hence the primitive cohomologies have Hodge theoretic properties.  Moreover, we will also show that these two cohomologies are actually isomorphic, i.e., $PH^k_\dpp(M) \cong PH^k_\dpm(M)\,$.

The primitive cohomologies $PH^k_{\dppm}(M)$ also have an interesting alternative description. Let us denote the space of primitive $\dpm$-closed $k$-form by $\CB'^k(M)\,$ (with an additional prime).  By demonstrating the validity of the local $\dpm$-Poincar\'e lemma, we shall show the isomorphism of $PH^k_\dpm(M)$ with the C{e}ch cohomology ${\breve H}^{n-k}(M,\CB'^n)\,$, for $0\leq k < n\,$.  (This and other properties of $PH^k_{\dppm}(M)$ will be worked out in Section 3.) It is interesting to note here that the middle degree primitive $\dpm$-closed forms plays a special role.  As pointed out in \cite{TY1}, the Poincar\'e dual currents of closed lagrangians are precisely $d$-closed (or equivalently, $\dpm$-closed) middle degree primitive currents.  

\begin{table}[t]
\begin{center}
{\it Primitive Cohomologies}
\vspace{2mm}
{\renewcommand{\arraystretch}{2.2} 
\renewcommand{\tabcolsep}{0.3cm}
\begin{tabular}{l l | l l }

& ~~~~~~~~~$0\leq k\,<\,n$ &  & ~~~~~~~~~~~~~~~~~$0\leq k\, \leq \, n$\\
\hline
$(1)$& $PH^k_\dpp(M) = \dfrac{\ker \dpp \cap \CB^k(M)}{\im \dpp \cap \CB^k(M)}$& $(3)$
& $PH^k_{\dpdl}(M) = \dfrac{\ker (\dpp + \dpm) \cap \CB^k(M)}{\dpp\dpm \CB^k(M)}$  \\ 
$(2)$& $PH^k_\dpm(M) = \dfrac{\ker \dpm \cap \CB^k(M)}{\im \dpm \cap \CB^k(M)}$ & $(4)$
& $PH^k_{d\dl}(M) = \dfrac{\ker \dpp\dpm \cap \CB^k(M)}{ \dpp \CB^{k-1} + \dpm \CB^{k+1}}$ 
\end{tabular}}
\end{center}
\
 \caption{The primitive cohomologies defined on a symplectic manifold $(M^{2n}, \,\om)$ introduced here (1-2) and in Paper I (3-4) \cite{TY1}, expressed in terms of $\dpp$ and $\dpm\,$. 
\label{tabcoh}}
 \
 
\end{table}

Concerning the middle degree, observe that in the elliptic complex \eqref{ecomp} above, the middle degree primitive forms are curiously connected by the second-order differential operator, $\dpp\dpm\,$.  With its presence, two middle-dimensional primitive cohomologies can be read off from the elliptic complex: 
$$PH^n_{d\dl}= \frac{\ker \dpp\dpm \cap \CB^n(M)}{\im \dpp \cap \CB^n(M)}~, \qquad PH^n_{d+\dl}= \frac{\ker \dpm \cap \CB^n(M)}{\im \dpp\dpm\cap \CB^n(M)} ~.$$
These two middle-dimensional cohomologies are actually special cases of the two primitive cohomologies - $PH^k_{d+\dl}(M)$, $PH^k_{d\dl}(M)$
- introduced in Paper I \cite{TY1}.  These two were obtained by Lefschetz decomposing their corresponding symplectic cohomologies - $H^*_{d+\dl}(M)$, $H^*_{d\dl}(M)$
- which are defined on the space of all differential forms.  The two primitive cohomologies from Paper I are well defined for all $k\leq n$, which includes the middle degree, and can be expressed in terms of $\dpp$ and $\dpm\,$, as presented in Table \ref{tabcoh}, where we have collected the various primitive cohomologies.

We should emphasize that the dimensions of all the primitive cohomologies are invariant under symplectomorphisms.  However, they can vary along with the de Rham class of the symplectic form.  In Section 4, we calculate the various primitive symplectic cohomologies for a six-dimensional symplectic nilmanifold.  As can be seen clearly in Table \ref{extable} in Section 4, primitive cohomologies on a symplectic manifold do contain more information than the de Rham cohomology.  In particular, we will show explicitly that the dimension of $PH^2_\dppm(M)$ does vary in this specific nilmanifold as the class of the symplectic form varies.  

This paper for the most part will be focused on introducing $PH^k_{\dppm}(M)$ and developing their properties.  A fuller discussion of applications and relations between the different primitive cohomologies will be given elsewhere \cite{TY3}.



\medskip

\noindent{\it Acknowledgements.~} 
We would like to thank C.-Y. Chi, T.-J. Li, X. Sun, C. Taubes, C.-J. Tsai, and especially V. Guillemin for helpful comments and discussions.  This work is supported in part by NSF grants 0714648, 0804454, and 0854971.


\section{Primitive Cohomologies}

We begin by discussing the primitive structures on symplectic spaces that arise due to the presence of a symplectic form.  
We then proceed to develop the primitive cohomologies and show their finiteness on compact symplectic manifolds.

\subsection{Primitive structures on symplectic manifolds}

Let ($M^{2n}, \om$) be a smooth symplectic manifold.   There is a natural $sl_2$ representation $(L,\La, H)$ that acts on the space of differential forms, $\CA(M)$.  On a differential form $A\in \CA^*(M)$, the operators act as follows:
\begin{align*}
L &: ~~ L (A) =\om \w A ~, \\
\La &:~~ \La (A_k) =\frac{1}{2}(\om^{-1})^{ij}\,i_{\pa_{x^i}} i_{\pa_{x^j}} A  ~,\\
H &:~~ H(A) =  \sum_k (n-k)\, \Pi^k  A~,
\end{align*}
where $\w$ and $i$, respectively, denote the wedge and interior product, $(\om^{-1})^{ij}$ is the inverse matrix of $\om_{ij}$, and $\Pi^k:\CA^*(M) \to \CA^k(M)$ projects onto forms of degree $k$.  These actions result in the $sl_2$ algebra 
\be\label{Lalg}
[\La, L] = H\,, \qquad [H, \La]  = 2 \La\,,\qquad [H,L] = -2 L ~ .
\ee

With this $sl_2$ action, the space of all differential forms $\CA(M)$ can be arranged in terms of irreducible modules of the $sl_2$ representation \cite{Weil}.  From this perspective, the primitive forms are precisely the highest-weight representatives of the $sl_2$ modules.  More concretely, a differential $s$-form is called a primitive form, i.e. $B_s \in \CB^s(M)$ with $s\leq n$, if it satisfies the condition $\La B_s =0$, or equivalently, $L^{n-s+1} B_s =0$.

Now we can of course also decompose any differential form $A_k \in \CA^k(M)$ into components of different $sl_2$ modules.  This is commonly called the Lefschetz decomposition (from the K\"ahler geometry literature).  Specifically, we can write
\be\label{Lefdec}
A_k = \sum_{r\geq{\rm max}(k-n,0)}\frac{1}{r!}\, L^r B_{k-2r}~.
\ee
We emphasize that the Lefschetz decomposition is unique, as the $B_{k-2r}$'s in \eqref{Lefdec} above are solely determined by $A_k$.  By a straightforward calculation, we find 
\begin{align}\label{Bdef}
B_{k-2r} 
&=  \left(\sum_{l=0} \,\ars \,\frac{1}{l\,!}\,L^l \La^{r+l}\right) \, A_k~,
\end{align}
where $\ars$ are rational coefficients given by the expression
\begin{align}\label{arsf}
\ars
 = (-1)^l \, (n-k+ 2r +1)^2 \prod_{i=0}^r \frac{1}{n-k+2r+1 -i } \,\prod_{j=0}^l \frac{1}{n-k+2r +1 +j}~.
\end{align}
Thus, for example, it follows from \eqref{Bdef} and \eqref{arsf} that the first primitive form term $B_k$ in the decomposition $A_k = B_k + L\, B_{k-2} + \ldots~ $ for $k\leq n$ has the expression
\be\label{aprojb}
B_k= \left\{1 - \frac{1}{n-k+2} L\La + \frac{1}{2!} \frac{1}{(n-k+2)(n-k+3)} L^2 \La^2 - 
\ldots \right\} A_k~.
\ee

To fully appreciate the decomposition, it is useful to see the Lefschetz decomposition applied to all differential forms of a given dimension $d=2n$.  We write out the decomposition for $d=8$ in Fig. \ref{Lefdec8} having arranged the terms in a suggestive manner.  
\begin{figure}
\begin{align*}
A_0 &= B_{0,0} \\
A_1 &= ~~~~~~~~~~~B_{0,1} \\
A_2 &= L B_{1,0} ~~~~~~~~~~~~+ B_{0,2}  \\
A_3 &= ~~~~~~~~~~~ L B_{1,1} ~~~~~~~~~~~~~+B_{0,3} \\
A_4 &= \frac{1}{2!} L^2 B_{2,0}~~~~~~~~+ L B_{1,2}~~~~~~~~~~+B_{0,4}  \\
A_5 &= ~~~~~~~~~~~\frac{1}{2!} L^2 B_{2,1}~~~~~~~~~    + L B_{1,3}\\
A_6 &= \frac{1}{3!} L^3 B_{3,0}~~~~~~~~+ \frac{1}{2!} L^2 B_{2,2}  \\
A_7 &=  ~~~~~~~~~~~\frac{1}{3!} L^3 B_{3,1} \\
A_8 &= \frac{1}{4!} L^4 B_{4,0} 
\end{align*}
\caption{Lefschetz decomposition of differential forms in dimension $d=8$.  Here, $B_{r,k-2r}$ denotes a primitive $(k-2r)$-form associated with the $\frac{1}{r!}L^r$ term.  }
\label{Lefdec8}
\end{figure}

Clearly, each term of the decomposition can be labeled by a pair $(r,s)$ corresponding to the space
\be
\CL^{r,s}(M)=\left\{A \in \CA^{2r+s}(M){\big \arrowvert} A= \frac{1}{r!}L^r B_s\, ~{\rm with}~~ \La\, B_s =0 \right\}~.
\ee
Notice that the indices $r$ and $s$ each takes value between $0$ and $n$.  And we can naturally arrange all $\CL^{r,s}$'s into a {\it pyramid} as in Fig. \ref{sympr} (in Section 1), having rotated the terms in Fig. \ref{Lefdec8} counterclockwise by $90\,^{\circ}$.  The symplectic pyramid is heuristically for our purpose the analog of the $(p,q)$ diamond of complex geometry.

To distinguish the different $\CL^{r,s}(M)$ spaces, we shall make use of the operator $H$ and also introduce the operator $R\,$, which picks out the $r$ index.

\begin{dfn}\label{rdef} 
On a symplectic manifold, $(M,\om)$, the $R$ operator acts on an element $L^{r,s}\in \CL^{r,s}(M)$ as 
\be\label{rdefeq} 
R\, L^{r,s} = r \, L^{r,s}~.
\ee
\end{dfn}

The $s$ index is discerned by the $(H+2R)$ operator
\be\label{sfeq}
(H+2R)\, L^{r,s} = (n-s) L^{r,s} ~,
\ee 
where again $L^{r,s}\in \CL^{r,s}(M)\,$.
Note that acting on $\CL^{r,s}(M)$, $L$ and $\La$ raises and lowers $R$ by one, respectively.  More precisely, we have the following useful relations relating $(L,\La, H, R)$.  
\begin{lem}
On a symplectic manifold $(M,\om)$, the following relations hold:
\begin{itemize}
\setlength{\parsep}{0pt}
\setlength{\itemsep}{0pt}
\item[{\rm(i)}] $\left[\La\,,\, L^r\right] = (H+r-1)\,r\, L^{r-1}\,$ for $r\geq 1\,$;  
\item[{\rm(ii)}] $L\La = (H+R+1) R\,$;
\item[{\rm(iii)}] $\La L = (H+R) (R+1)\,$.
\end{itemize}
\label{llhrelations}
\end{lem}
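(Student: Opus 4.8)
The plan is to dispatch (i) first as a formal consequence of the $sl_2$ relations \eqref{Lalg}, and then deduce (ii) and (iii) by evaluating on each Lefschetz component $\CL^{r,s}(M)$, which suffices since $\CA(M)=\bigoplus_{r,s}\CL^{r,s}(M)$ and the operators appearing in (ii) and (iii) are diagonal for this bigrading. For (i) I would induct on $r$. The base case $r=1$ is exactly $[\La,L]=H$. For the inductive step I would expand $[\La,L^{r+1}]=[\La,L^r]\,L+L^r[\La,L]$, insert the inductive hypothesis together with $[\La,L]=H$, commute $H$ past the surviving power of $L$ using $L^rH=(H+2r)L^r$ (itself immediate from $[H,L]=-2L$), and collect coefficients to recognize the result as $(H+r)(r+1)L^r$.

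For (ii) I would take a generic element $\frac{1}{r!}L^rB_s$ of $\CL^{r,s}(M)$, where $\La B_s=0$, and use two facts: $H$ acts on it as the scalar $n-2r-s$ (the form has degree $2r+s$), and $R$ acts as $r$ by Definition \ref{rdef}. Part (i) together with $\La B_s=0$ gives $\La\bigl(\frac{1}{r!}L^rB_s\bigr)=\frac{1}{r!}(H+r-1)\,r\,L^{r-1}B_s$; evaluating $H$ on the degree-$(2r+s-2)$ form $L^{r-1}B_s$ turns this into an explicit multiple of $\frac{1}{(r-1)!}L^{r-1}B_s$, and applying $L$ then exhibits $L\La$ as a scalar on $\CL^{r,s}(M)$. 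Matching that scalar against the eigenvalue $(n-2r-s+r+1)\,r$ of $(H+R+1)R$ proves (ii). For (iii) I would avoid repeating the computation: from $[L,\La]=-H$ in \eqref{Lalg} we have $\La L=L\La+H$, and the elementary identity $(H+R+1)R+H=(H+R)(R+1)$ (valid since $H$ and $R$ commute) then yields (iii) from (ii).

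I do not anticipate a real obstacle; the work is bookkeeping. The one place to be careful is the normalization of the Lefschetz components: the $\frac{1}{r!}$ factors must be carried through the applications of $\La$ and $L$ alongside the shift $\frac{1}{r!}\leftrightarrow\frac{1}{(r-1)!}$, and one should confirm (as remarked just before the lemma) that $L$ and $\La$ shift the $R$-grading by $\pm1$ while leaving $s$ fixed, so that the component-wise evaluation is legitimate. A secondary point deserving a sentence is why verifying on each $\CL^{r,s}(M)$ establishes the stated global operator identities, namely that the Lefschetz decomposition is a direct sum and that $L\La$, $\La L$, $H$, and $R$ all respect it.
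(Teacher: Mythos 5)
Your proposal is correct and follows essentially the same route as the paper: part (i) by iterating the $sl_2$ relations (the paper leaves the induction implicit, you spell it out), and parts (ii)--(iii) by evaluating on each Lefschetz component $\CL^{r,s}(M)$ using (i). Your shortcut for (iii) via $\La L = L\La + H$ is a minor streamlining but not a genuinely different argument, and all the scalars check out, including the boundary cases $r=0$ and $r+s=n$ where both sides vanish.
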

\begin{proof}(i) follows straightforwardly from repeated applications of the $sl_2$ algebra commutation relations in \eqref{Lalg}.  (ii) and (iii) can be checked by acting on $\CL^{r,s}(M)$ and using (i).  

\end{proof}

Let us now introduce the symplectic star operator $\ss: \CA^k(M) \to \CA^{2n-k}(M)$ introduced in \cite{EH, Brylinski}.  It is defined by the local inner product  
\begin{align}
A \w \ss A' &=  (\om^{-1})^k(A, A') \,d\,{\rm vol} \notag\\
& = \frac{1}{k!}(\om^{-1})^{i_1 j_1}(\om^{-1})^{i_2 j_2}\!\ldots (\om^{-1})^{i_k j_k}\, A_{i_1 i_2 \ldots i_k}\,A'_{j_1 j_2 \ldots j_k}\,\,  \frac{\om^n}{n!}~. \label{sstardef}
\end{align}   
We note that $\ss\ss =1$, which follows from Weil's relation \cite{Weil,Guillemin}
\be \label{ssrel}
\ss \frac{L^r}{r!} B_s = (-1)^{s(s+1)/2} \frac{L^{n-r-s}}{(n-r-s)!} B_s~.
\ee
Therefore, acting on $\CL^{r,s}(M)$, we have that 
$$\ss: \CL^{r,s}(M) \to \CL^{n-r-s,s}(M)\,.$$  
In particular, for forms of middle degree $k= 2r+s =n$, or equivalently, $r= \frac{1}{2}(n-s)$, the action of the $\ss$ operator leaves them invariant up to a $-1$ factor.   And consider all $\CL^{r,s}(M)$ elements together as in the pyramid diagram in Fig. \ref{sympr}, the action of $\ss$ is a reflection with respect to the central vertical axis.

Finally, let us briefly discuss the linear structure - the primitive exterior vector space.  Let $V$ be a real symplectic vector space of dimension $d=2n\,$.  We write $\bw^k V$ for the $k$-exterior product of $V\,$.  Let $e_1, e_2,\ldots, e_{2n}$ be a basis for $V$ and take the symplectic form to be  $\om = e_1 \w e_2 + \ldots + e_{2n-1}\w e_{2n}\,$.   Let $P\bw^kV$ denote the primitive elements of $\bw^k V$.  The symplectic pyramid as in Fig. \ref{sympr} allows us to relate the dimension of $P\bw^k V$  with the dimension of $\bw^k V$.   Specifically, for $k\leq n$, it easy to see from the pyramid diagram that
\be
\dim P\bw{\!}^k \,V =  \dim \bw{\!}^k\, V - \dim \bw{\!}^{k-2}\, V = \binom{2n}{k} - \binom{2n}{k-2}~.
\ee
Moreover, the sum of the dimensions of all primitive exterior vector space is given by
\be
\sum_{k=0}^n \dim P\bw{\!}^k\, V =  \dim \bw{\!}^{n-1}\, V + \dim \bw{\!}^{n}\, V= \binom{2n}{n-1} + \binom{2n}{n}~.
\ee
Let us also give a canonical recursive method to write down the set of basis elements of $P\bw^kV\,$.  The idea is to construct the basis elements of dimension $d=2n$ from those of dimension $d=2(n-1)$.  For instance, selecting out the $e_1$ and $e_2$ elements, we have the following decomposition.
\begin{lem} Let $V$ be a symplectic vector space with the non-degenerate form $\om = e_{12} + e_{34} + \ldots + e_{2n-1,2n}$ (where the notation $e_{12}= e_1\w e_2$).  Then any element of the primitive exterior vector space $\mu_k\in P\bw^k V$ can be expressed as
\be\label{pvectdd}
\mu_k = e_1 \w \b_1 + e_2 \w \b_2 + (e_{12} -  \frac{1}{H+1} \sum_{j=2}^n e_{2j-1, 2j}) \w \b_3 + \b_4~,
\ee
where $\b_1, \b_2 \in P\!\bw^{k-1}\!V$, $\b_3 \in P\!\bw^{k-2}\!V$, $\b_4\in P\!\bw^{k}\!V$, and further $\b_1,\b_2,\b_3, \b_4$ do not contain either $e_1$ or $e_2\,$.\label{lpvectd}
\end{lem}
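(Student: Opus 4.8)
The plan is to peel off the $e_1,e_2$ dependence of an arbitrary $\mu_k \in P\!\bw^k V$ by writing $\bw^* V = \bw^*\langle e_1,e_2\rangle \otimes \bw^* W$, where $W$ is the span of $e_3,\dots,e_{2n}$, equipped with the restricted symplectic form $\om' = e_{34}+\dots+e_{2n-1,2n}$. First I would simply expand $\mu_k$ with respect to this tensor decomposition: every $k$-form can be written uniquely as $\mu_k = e_1\w\gamma_1 + e_2\w\gamma_2 + e_{12}\w\gamma_3 + \gamma_4$, where $\gamma_1,\gamma_2 \in \bw^{k-1} W$, $\gamma_3 \in \bw^{k-2} W$, $\gamma_4 \in \bw^k W$ (none of which contain $e_1$ or $e_2$). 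The content of the lemma is then entirely about what the primitivity condition $\La\mu_k = 0$ forces on the $\gamma_i$, and in particular the need to replace the "naive" $e_{12}\w\gamma_3$ term by the corrected combination $\big(e_{12} - \tfrac{1}{H+1}\sum_{j=2}^n e_{2j-1,2j}\big)\w\gamma_3$ to land back inside the primitive subspace.

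The key computation is to evaluate $\La$ on each piece. Writing $\La = \La_0 + \La'$ where $\La_0 = i_{\pa_{e_1}} i_{\pa_{e_2}}$ is the part of the contraction coming from the $e_{12}$ block of $\om$ and $\La'$ is the contraction coming from $\om'$ (acting only on the $W$ factors), one gets $\La(e_1\w\gamma_1) = -e_1\w\La'\gamma_1$ (since $i_{\pa_{e_2}} e_1 = 0$ and $\La_0$ kills anything with no $e_2$), similarly $\La(e_2\w\gamma_2) = -e_2\w\La'\gamma_2$, $\La(\gamma_4) = \La'\gamma_4$, and for the $e_{12}$-term $\La(e_{12}\w\gamma_3) = \gamma_3 + e_{12}\w\La'\gamma_3$ where the leading $\gamma_3$ comes from $\La_0(e_{12}\w\gamma_3) = \gamma_3$. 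Meanwhile $\La(e_{2j-1,2j}\w\gamma_3) = \gamma_3 + e_{2j-1,2j}\w\La'\gamma_3 - (\text{contraction terms pairing } e_{2j-1,2j} \text{ with } \gamma_3)$; summing over $j=2,\dots,n$ and using that on the $W$-primitive form $\gamma_3$ the relevant combination of such contractions reduces (via the $sl_2$ relations of Lemma \ref{llhrelations} applied on $W$, where $H$ acts as $(n-1) - \deg$) to a multiple of $\gamma_3$, one finds $\sum_{j=2}^n \La(e_{2j-1,2j}\w\gamma_3) = (H+1)\gamma_3 + (\text{terms with }e_{2j-1,2j}\text{ and }\La'\gamma_3)$. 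Hence the combination $e_{12} - \tfrac{1}{H+1}\sum e_{2j-1,2j}$ is designed precisely so that the troublesome leading $\gamma_3$ cancels, leaving $\La$ of the corrected term expressible purely through $\La'$ acting on $\gamma_3$.

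Putting these together, $\La\mu_k = 0$ becomes an identity among forms that are themselves sorted by their $e_1,e_2$ content; matching the four components $1$, $e_1$, $e_2$, $e_{12}$ separately forces $\La'\gamma_i = 0$ for $i=1,2,3,4$, i.e. each $\gamma_i$ is primitive as an element of $\bw^* W$. Setting $\b_i = \gamma_i$ then gives the claimed decomposition, and conversely any choice of $W$-primitive $\b_1,\b_2,\b_3,\b_4$ produces a $V$-primitive $\mu_k$ by reversing the computation. I expect the main obstacle to be the bookkeeping in the $e_{12}$-term: one must carefully handle the cross terms where an interior product contracts an $e_{2j-1}$ or $e_{2j}$ against $\gamma_3$ (these do not simply vanish) and show that the aggregate of such terms, on a $W$-primitive $\gamma_3$, collapses to the scalar $H+1$ via Lemma \ref{llhrelations}; getting the normalization constant exactly right (and confirming $H+1$ is invertible on nonzero primitive $\gamma_3$, which holds since $H$ acts there by a nonnegative integer) is the delicate point.
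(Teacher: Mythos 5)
Your setup (splitting off the $e_1,e_2$ factor and sorting $\La\mu_k=0$ by $e_1,e_2$ content) is exactly the paper's strategy, and your computation of $\La$ on each graded piece is essentially right. But the final assembly contains a genuine error: primitivity does \emph{not} force $\La'\gamma_4=0$. The component of $\La\mu_k$ containing neither $e_1$ nor $e_2$ is $\gamma_3+\La'\gamma_4$ — the extra $\gamma_3$ coming from contracting the $e_{12}$ block against $e_{12}\w\gamma_3$, exactly as you computed — so the constraint is $\La'\gamma_4=-\gamma_3$, which is nonzero whenever $\gamma_3\neq 0$. Hence $\gamma_4$ is in general \emph{not} $W$-primitive and you cannot set $\b_4=\gamma_4$. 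Indeed, if all four $\gamma_i$ were primitive, your argument would deliver the naive term $e_{12}\w\b_3$ rather than the corrected combination, i.e.\ it would prove a different (and false) statement. A concrete counterexample to your claim: $\mu_2=e_{12}-\frac{1}{n-1}(e_{34}+\cdots+e_{2n-1,2n})$ is primitive, but its component with no $e_1,e_2$ is $\gamma_4=-\frac{1}{n-1}\om'$, which satisfies $\La'\gamma_4=-1\neq 0$.

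The missing step — which is precisely where the corrected $e_{12}$-term comes from — is the further decomposition of $\gamma_4$. From $\La'\gamma_4=-\gamma_3$ and the $W$-primitivity of $\gamma_3$ one gets $(\La')^2\gamma_4=0$, so the Lefschetz decomposition of $\gamma_4$ inside $\bw^{*}W$ has only two terms, $\gamma_4=\b_4+\om'\w\b_4'$ with $\b_4,\b_4'$ $W$-primitive; applying $\La'$ and using Lemma \ref{llhrelations} pins down $\b_4'$ as an explicit multiple of $\b_3$ (namely $-(H-1)^{-1}\b_3$ in the paper's normalization, which becomes the $(H+1)^{-1}$ of the statement after the degree shift from wedging with a two-form). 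It is this $\om'\w\b_4'$ piece of $\gamma_4$, recombined with $e_{12}\w\gamma_3$, that yields $\bigl(e_{12}-\frac{1}{H+1}\sum_{j\geq 2}e_{2j-1,2j}\bigr)\w\b_3$. Your second paragraph, which checks that this combination has vanishing $\La$, is a correct and worthwhile sanity check, but it is not a substitute for deriving the combination from the constraint on $\gamma_4$; as written, the proof does not reach the stated formula.
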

\begin{proof}
Generally, we can write 
\be\label{muorig}
\mu = e_1 \w \a_1 + e_2 \w \a_2 + e_{12} \w \a_3 + \a_4~,
\ee
where $\a_1,\a_2,\a_3, \a_4$ are exterior products of $e_3,e_4,\ldots ,e_{2n-1},e_{2n}\,$.  Primitivity of $\mu$ implies
\begin{align*}
\La\mu = e_1 \w \La \a_1 + e_2 \w \La \a_2 + e_{12} \w \La \a_3 + \a_3 + \La \a_4 =0 ~,
\end{align*}
giving the conditions
\begin{align}
\La \a_1 = \La \a_2 &= \La \a_3=0~,\label{aoth}\\
\a_3 + \La \a_4 &=0 ~.\label{athf}
\end{align}
Hence, $\a_i$ for $i=1,2,3$ must be primitive and we will denote these $\a_i$ by $\b_i\in P\!\bw^*\!V\,$ to highlight their primitive property.  Now, $\a_4$ is not primitive.  But with \eqref{athf} and $\a_3=\b_3$ being primitive, we have $\La^2 \a_4 =0$.  Thus, we can write  $\a_4 = \b_4 + (\om - e_{12}) \w \b'_4\,$ with $\b_4,\b'_4\in P\!\bw^*\!V\,$, and moreover, using \eqref{athf} again, we have $\b'_4 = - (H-1)^{-1} \b_3\,$.  Altogether, \eqref{muorig} becomes
\begin{align*}
\mu_k &= e_1 \w \b_1 + e_2 \w \b_2 + e_{12} \w \b_3 + \b_4 - (e_{34}+\ldots+e_{2n-1,2n})\, \frac{1}{(H-1)}\, \b_3 \\
&=e_1 \w \b_1 + e_2 \w \b_2 + \left[e_{12} -\frac{1}{H+1} (e_{34}+\ldots+e_{2n-1,2n})\right] \w \b_3 + \b_4~,
\end{align*}
giving us the desired expression.
\end{proof}

With Lemma \ref{pvectd}, we have at hand a recursive algorithm to write down a basis for $P\bw^kV\,$ for $V$ of any arbitrary even dimension.

\subsection{Differential operators and cohomologies}

In this subsection, we shall consider the action of differential operators on $\CL^{r,s}(M)\,$.  We start with the exterior derivative operator, $d\,$.  We obtain  
\begin{align}
d\, \left( \frac{L^r}{r!} B_{s}\right)& = \frac{L^r}{r!}\, d(B_s)\notag\\ 
& = \frac{L^r}{r!}\,B^0_{s+1} + \frac{L^{r+1}}{r!}\,B^1_{s-1} ~. \label{ddecompex}
\end{align}
In the above, we have noted the symplectic condition $[d,L]=0$ and the useful formula \footnote{\eqref{dprim} is just \eqref{dbform} with $\om \w$ replaced here by $L$.}
\be\label{dprim}
dB_s = B^0_{s+1} + L \,B^1_{s-1}~,
\ee
where $B^0_{s+1}\in \CB^{s+1}(M)$ and $B^1_{s-1} \in \CB^{s-1}(M)$ are primitive forms and, moreover, $B^0_{s+1}=0$ if $s=n\,$.  Equation \eqref{dprim} follows simply (see, e.g., \cite{Huy,TY1}) from first writing down the Lefschetz decomposition for $dB_s\,$,  
\be \label{dprimexpl}
dB_s = B^0_{s+1} + L\, B^1_{s-1} + {1\over 2!}\, L^2 \, B^2_{s-3} + {1\over 3!} \, L^3\, B^3_{s-5} + \ldots ~,
\ee
and then applying $L^{n-s+1}$ to both sides of \eqref{dprimexpl}.   The left-hand side will then be zero by the primitive condition, $L^{n-s+1} B_s =0\,$.  Thus, each term on the right-hand side (with now an additional $L^{n-s+1}$) must also be zero.   This results in the requirement that $B^2_{s-3}, B^3_{s-5}, \ldots$ in \eqref{dprimexpl} must be identically zero.    

In all, we have the result that $d$ acting on $\CL^{r,s}$ leads to at most two terms, 
\be\label{cldecomp}
d:~\CL^{r,s}~ \to ~~\CL^{r,s+1} \oplus ~\CL^{r+1,s-1}~.
\ee
As explained in the Introduction (Section 1) via an analogy to the complex geometry case,  \eqref{cldecomp} naturally gives us a decomposition of the exterior derivative operator in symplectic geometry.  And indeed we can define the decomposition of $d$ into two linear differential operators $(\dpp, \dpm)$ by writing
\be\label{ddecomp}
d = \dpp + L \, \dpm~.
\ee
By comparing \eqref{ddecompex} and \eqref{ddecomp}, we have the following definition:

\begin{dfn}\label{pmdef}
On a symplectic manifold $(M, \om)$, the first-order differential operators  $\dpp: \CL^{r,s}(M) \to \CL^{r,s+1}(M)$ and $\dpm: \CL^{r,s}(M) \to \CL^{r,s-1}(M)$ are defined by the property
\begin{align}
\label{dppdef}
\dpp \left(\frac{L^r}{r!}{B_s}\right)& = \, \frac{L^r}{r!}{B^0_{s+1}}~, \\
\label{dpmdef}
\dpm \left(\frac{L^r}{r!}{B_s}\right)& =\, \frac{L^r}{r!}{B^1_{s-1}}~,
\end{align}
where $B_s, B^0_{s+1}, B^1_{s-1} \in \CB^*(M)$ and $dB_s = B^0_{s+1} + L \,B^1_{s-1}\,$.
\end{dfn}

Note that we can restrict to the primitive subspace of differential forms by setting $r=0$ above.  Then $\dpp$ and $\dpm$ become the projections of $(d B_s)$ to the primitive terms, $B^0_{s+1}$ and $B^1_{s-1}\,$, respectively.   Therefore, $\pa_{\pm}: \CB^s(M) \to \CB^{s\pm 1}(M)$ preserve primitivity and are the natural first-order operators on the space of primitive differential forms $\CB^*(M)$.   

With Definition \ref{pmdef}, we have the following properties:
\begin{lem}  \label{pmprop} On a symplectic manifold, $(M^{2n}, \om)$, the symplectic differential operators $(\dpp, \dpm)$ satisfy the following:  (i) $(\dpp)^2 = (\dpm)^2=0$;  (ii) $L(\dpp \dpm) = - L(\dpm \dpp)$; (iii) $[\dpp, L] = [L\,\dpm, \,L] =0$.
\end{lem}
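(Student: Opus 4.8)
The plan is to derive all three identities from the single master equation $d = \dpp + L\,\dpm$ together with $d^2 = 0$ and $[d,L] = 0$, using the fact that the Lefschetz decomposition is unique so that an identity among operators can be checked componentwise on each $\CL^{r,s}(M)$.

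First I would establish (iii). Since $[d,L] = 0$, applying $d = \dpp + L\dpm$ to $L\,(\tfrac{L^r}{r!}B_s)$ and comparing with $L$ applied to $d\,(\tfrac{L^r}{r!}B_s)$, and then reading off the primitive components in each Lefschetz stratum, forces $\dpp L = L \dpp$ on every $\CL^{r,s}$; equivalently $[\dpp, L] = 0$. Then from $[d,L] = 0$ and $d = \dpp + L\dpm$ we get $0 = [\dpp,L] + [L\dpm, L] = [L\dpm, L]$, which is the second half of (iii). (One should note $\dpm$ itself does \emph{not} commute with $L$, only the composite $L\dpm$ does; this is why the statement is phrased with $L\dpm$.)

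Next, (i) and (ii) together come from expanding $0 = d^2 = (\dpp + L\dpm)^2 = \dpp^2 + \dpp L \dpm + L\dpm\dpp + L\dpm L\dpm$. Using $[\dpp,L]=0$ and $[L\dpm,L]=0$ from step one, this reorganizes as $\dpp^2 + L(\dpp\dpm + \dpm\dpp) + L^2 \dpm^2 = 0$. Now I invoke the uniqueness of the Lefschetz decomposition: on $\CL^{r,s}(M)$ the three summands land in $\CL^{r,s+2}$, $\CL^{r+1,s}$, and $\CL^{r+2,s-2}$ respectively — distinct strata of the pyramid — so each must vanish separately. This gives $\dpp^2 = 0$, then $L^2\dpm^2 = 0$ which yields $\dpm^2 = 0$ (since $L^2$ is injective on primitive-form strata with $r+2 \le n$, i.e. wherever $L^2\dpm^2$ is a priori nonzero), and finally $L(\dpp\dpm + \dpm\dpp) = 0$, which is exactly (ii).

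The one point requiring a little care — the main obstacle, though a mild one — is the bookkeeping at the top of the pyramid: when $s = n$ there is no $\CL^{r,n+1}$, so $\dpp$ annihilates degree-$n$ primitive forms, and when $r$ is near $n$ the operator $L$ (or $L^2$) may kill a stratum, so "$L^2\dpm^2 = 0 \Rightarrow \dpm^2 = 0$" needs the observation that $\dpm^2$ already lands two steps down in $s$ and one would only worry where $L^2$ is not injective, but there $\dpm^2$ maps into a stratum that is zero anyway. I would handle this by checking the edge cases against the pyramid in Fig.~\ref{sympr} and against Weil's relation \eqref{ssrel}, which pins down exactly when $L^j$ fails to be injective on primitive forms. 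Everything else is a direct consequence of $d^2 = 0$, $[d,L]=0$, and uniqueness of Lefschetz decomposition.
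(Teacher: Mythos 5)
Your proposal is correct and takes essentially the same route as the paper: its one-line proof simply asserts that the identities follow from $d^2=0$, $[d,L]=0$, and uniqueness of the Lefschetz decomposition, and your expansion of $(\dpp+L\,\dpm)^2$ into the three distinct strata $\CL^{r,s+2}$, $\CL^{r+1,s}$, $\CL^{r+2,s-2}$ is exactly the intended argument. The edge case you flag is harmless, since $L^2$ is injective on $\CL^{r,s-2}$ whenever $r+s\le n$, so $L^2\dpm^2=0$ does force $\dpm^2=0$ everywhere.
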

\begin{proof} Using $d=\dpp + L\,\dpm\,$ and the uniqueness of the Lefschetz decomposition, these relations follow directly from $d^2=0$ and $[d,L]=0$.  
\end{proof}

We remark that relations (ii) and (iii) in Lemma \ref{pmprop} simplify to $\dpp \dpm = - \dpm\dpp$ and $[\dpp, L]=[\dpm, L]=0$, respectively, when acting on $\CL^{r,s}(M)$ for $r+s < n$.  Only when $r+s = n$ does the additional $L$ operator need to be present since the primitive condition implies $\dpp\, \CL^{n-s,s} =0$ and also $L\,  \CL^{n-s,s} = 0\,$.  So for the most part, Lemma \ref{pmprop} does imply that $\dpp$ and $\dpm$ besides squaring to zero, also anticommute with each other and commute with $L$.

Let us now consider the symplectic differential operator, $\dl\,$, and its action on $\CL^{r,s}(M)\,$.  Recall that acting on a differential $k$-form, it is defined as
\begin{align*}
\dl :&= d\,\La - \La\, d \\
&= (-1)^{k+1}\ss\,d\,\ss~,
\end{align*}
where in the second line $\dl$ is expressed as the symplectic adjoint of $d$ with respect to the symplectic star operator, $\ss\,$, defined by \eqref{sstardef}.  Using Lemma \ref{llhrelations}(i) and \eqref{dprim}, we can write  
\begin{align*}
d\La \,\frac{L^r}{r!} B_{s}& = (H+R+1) \frac{L^{r-1}}{(r-1)!}B^0_{s+1} + R (H+R)  \frac{L^r}{r!}B^1_{s-1}~,\\
\La d\, \frac{L^r}{r!} B_{s}& = (H+R)\frac{L^{r-1}}{(r-1)!} B^0_{s+1} + (R+1)(H+R) \frac{L^r}{r!} B^1_{s-1}~,
\end{align*}
where, for instance, $(R+1)(H+R) \frac{L^r}{r!} B^1_{s-1}  = (r+1)(n-r -s +1)\frac{L^r}{r!} B^1_{s-1}\,$.  Taking their difference, we obtain
\be\label{dlLr}
\dl \, \frac{L^r}{r!} B_{s} = \frac{L^{r-1}}{(r-1)!} B^0_{s+1} - (H+R) \frac{L^r}{r!} B^1_{s-1} ~,
\ee
which implies
\be\label{lcldecomp}
\dl: ~\CL^{r,s}~ \to ~~\CL^{r-1,s+1} \oplus ~\CL^{r,s-1}~,
\ee 
and the decomposition
\be\label{dldecomp}
\dl = \frac{1}{(H+R+1)}\, \dpp\,\La\, - \,(H+R) \,\dpm  ~,
\ee
where the notation $\dfrac{1}{H+R+1} = (H+R+1)^{-1}$ just inverts the constants, e.g., $(H+R+1)^{-1} \left(\frac{L^r}{r!}B_s\right) = (n-r-s+1)^{-1} \left(\frac{L^r}{r!}B_s\right)\,$.

We can now give an explicit expression for $\dpp$ and $\dpm$ in terms of $d$ and $\dl$.  Comparing \eqref{ddecomp} with \eqref{dldecomp} and using Lemma \eqref{llhrelations}, we obtain the following expressions:
\begin{lem}\label{dppmdeff}On a symplectic manifold $(M, \om)$, $\dpp$ and  $\dpm$ can be expressed as
\begin{align}\label{dppdef}
\dpp &= \frac{1}{H+2R+1}[(H+R+1) d + L\dl]~, \\
\dpm &=\frac{-1}{(H+2R+1)(H+R)}[(H+R) \dl  - \La d\;]~. \label{dpmdef}
\end{align}
\end{lem}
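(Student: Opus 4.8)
The plan is to verify the two displayed formulas by evaluating both sides on a general Lefschetz monomial $\frac{L^r}{r!}B_s\in\CL^{r,s}(M)$, which suffices since these monomials span $\CA(M)$ by \eqref{Lefdec}; this is really just solving the linear system \eqref{ddecomp}--\eqref{dldecomp} for $\dpp$ and $\dpm$, but done on graded pieces so that the scalar factors stay visible. The ingredients are all in place: $d\!\left(\frac{L^r}{r!}B_s\right)=\frac{L^r}{r!}B^0_{s+1}+\frac{L^{r+1}}{r!}B^1_{s-1}$ from \eqref{ddecompex}--\eqref{dprim}; the formula \eqref{dlLr} for $\dl\!\left(\frac{L^r}{r!}B_s\right)$; the explicit expression for $\La\,d\!\left(\frac{L^r}{r!}B_s\right)$ in the display just above \eqref{dlLr}; and---via \eqref{rdefeq}, \eqref{sfeq} and Lemma~\ref{llhrelations}---the fact that on $\CL^{r,s}(M)$ the scalar operators act by $R=r$, $H+2R=n-s$, $H+R=n-r-s$.

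For the first formula I would compute $(H+R+1)\,d + L\,\dl$ on $\frac{L^r}{r!}B_s$: the two $B^1_{s-1}$-contributions are $+(n-r-s+1)\frac{L^{r+1}}{r!}B^1_{s-1}$ (from $(H+R+1)d$, reading $H+R$ on $\CL^{r+1,s-1}(M)$) and $-(n-r-s+1)\frac{L^{r+1}}{r!}B^1_{s-1}$ (from $L\dl$), and they cancel; the $B^0_{s+1}$-contributions are $(n-r-s)\frac{L^r}{r!}B^0_{s+1}$ and $r\frac{L^r}{r!}B^0_{s+1}$, summing to $(n-s)\frac{L^r}{r!}B^0_{s+1}$. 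Since $H+2R+1=n-s$ on $\CL^{r,s+1}(M)$, applying $\frac{1}{H+2R+1}$ returns $\frac{L^r}{r!}B^0_{s+1}=\dpp\!\left(\frac{L^r}{r!}B_s\right)$. For the second formula I would compute $(H+R)\,\dl-\La\,d$ on $\frac{L^r}{r!}B_s$: the $B^0_{s+1}$-contributions (both equal to $(n-r-s)\frac{L^{r-1}}{(r-1)!}B^0_{s+1}$) cancel, and the $B^1_{s-1}$-contributions combine to $-(n-r-s+1)\big[(n-r-s+1)+(r+1)\big]\frac{L^r}{r!}B^1_{s-1}=-(n-r-s+1)(n-s+2)\frac{L^r}{r!}B^1_{s-1}$. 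Since $(H+2R+1)(H+R)=(n-s+2)(n-r-s+1)$ on $\CL^{r,s-1}(M)$, applying $\frac{-1}{(H+2R+1)(H+R)}$ returns $\frac{L^r}{r!}B^1_{s-1}=\dpm\!\left(\frac{L^r}{r!}B_s\right)$.

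The only real obstacle is bookkeeping: $H$ and $R$ do not commute with $d$, $\dl$, $L$ or $\La$, and $d\!\left(\frac{L^r}{r!}B_s\right)$ has its two pieces in \emph{different} graded summands $\CL^{r,s+1}(M)$ and $\CL^{r+1,s-1}(M)$, on which $H+R$ equals $n-r-s-1$ and $n-r-s$ respectively, so one must be scrupulous about the space on which each scalar factor is evaluated---this is exactly the step where the purely operator-theoretic version of the argument (commuting $L$, $\La$ past $\dpp$, $\dpm$, $H$, $R$ using Lemma~\ref{pmprop} and Lemma~\ref{llhrelations}, e.g.\ $L\La=(H+R+1)R$ and $(H+R+1)L=L(H+R)$) is error-prone. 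One should also note the harmless boundary degeneracies: where $s=n$ the denominator $H+2R+1=n-s$ vanishes, but there $B^0_{s+1}=0$ so the numerator of the $\dpp$-formula vanishes too, and similarly at the edges for $\dpm$; away from these cases the constraint $r+s\le n$ enjoyed by every $\CL^{r,s}(M)$ keeps all denominators $\ge 1$, so the formulas hold with the evident convention.
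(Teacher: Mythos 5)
Your proposal is correct and takes essentially the same route as the paper: the authors obtain the formulas by comparing the two decompositions \eqref{ddecomp} and \eqref{dldecomp} and note that the resulting expressions can be checked against Definition~\ref{pmdef}, which is exactly the graded-component verification you carry out. Your eigenvalue bookkeeping on each $\CL^{r,s}(M)$ and the treatment of the boundary degeneracies ($s=n$, $r+s=n$) are accurate.
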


Let us point out first that the operator $(H+2R+1)$ always has a non-zero action on $\CL^{r,s}$, since the corresponding eigenvalue $(n-s+1)>0$ is always positive.   For the operator $1/{(H+R)}$ in \eqref{dpmdef}, it acts on  forms in $\CL^{r,s}$ with $r+s < n$, and thus it is also well-defined.  Now, we could have equivalently defined $\dpp$ and $\dpm$ using the expressions \eqref{dppdef} and \eqref{dpmdef}.   As is straightforward to check, $\dppm$ defined this way satisfy Definition \ref{pmdef}.  Moreover, since $\dl$ is the symplectic adjoint of $d$, i.e.,
$$\dl= (-1)^{k+1} \ss d \, \ss =  (-1)^{k+1} \ss (\dpp + L\, \dpm)\,\ss~, $$
it can also be verified using \eqref{dppdef} and \eqref{dpmdef} that  
\begin{align*}
\dpp^{\ss} &:= (-1)^{k+1} \ss \dpp\, \ss =  \frac{1}{H+R+1} \,\dpp\, \La~, \\
(L\,\dpm)^{\ss}&:=  (-1)^{k+1} \ss (L\,\dpm)\, \ss = - (H+R) \,\dpm ~,
\end{align*}
which are consistent with \eqref{dldecomp}.

With $d$ and $\dl$, we can now proceed to consider their composition, $d\dl$.  This second-order differential operator appears naturally in symplectic cohomologies \cite{TY1}.  We can calculate $d\dl: \CL^{r,s}(M)\to \CL^{r,s}(M)$ using \eqref{ddecomp} and \eqref{dldecomp}.  We find
\begin{align}
d\dl &= \left(\dpp + L\, \dpm \right) \left( \frac{1}{H+R+1}\,\dpp \La - (H+R)\, \dpm\right)\notag \\
& = - \dpp (H+R) \dpm + L\, \dpm \frac{1}{H+R+1}\, \dpp \La \notag\\
& = - \dpp (H+R) \dpm - \frac{1}{H+R+1}\dpp\dpm\, L \La  \notag\\
&= - (H+2R+1) \, \dpp\dpm~,  \label{dppmdef}
\end{align}
where, in the last line, we have used Lemma \ref{llhrelations}(ii).  In short, we have $d\dl \sim \dpp\dpm\,$.

As we have emphasized, the action of the differential operators $(\dpp\,, \dpm\,, \dpp\dpm)$ on $\CL^{r,s}$ reduces to their action on the primitive elements $\CL^{0,s}=\CB^s\,$.  Acting on primitive forms, the expressions for the differential operators simplify.  
\begin{lem} \label{primdiff}Acting on primitive differential forms, the operators, $(\dpp\,, \dpm\,, \dpp\dpm)$ have the expressions 
\begin{align}
\dpp &= d- L\,H^{-1}\La\, d ~,\label{dppP}  \\
\dpm &= \frac{1}{H}\, \La\, d ~,\label{dpmP}\\
\dpp\dpm & =  -\frac{1}{H+1}\, d\,\dl= \frac{1}{H+1}\,d\La d ~,\label{dppmP}
\end{align}
and moreover, $\dl = - H \dpm$.
\end{lem}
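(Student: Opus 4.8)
The plan is to restrict everything to the primitive subspace $\CB^s(M)=\CL^{0,s}(M)$, on which $R$ acts as zero and $H$ acts as multiplication by $n-s$, and to derive all four identities from the single defining relation of Definition~\ref{pmdef} with $r=0$, namely
\[
dB_s=\dpp B_s+L\,\dpm B_s,\qquad \dpp B_s\in\CB^{s+1}(M),\quad \dpm B_s\in\CB^{s-1}(M),
\]
together with the $sl_2$ commutation identities of Lemma~\ref{llhrelations}. (Equivalently one could simply set $R=0$ in Lemma~\ref{dppmdeff} and in the general formulas \eqref{dlLr} and \eqref{dppmdef}, but the direct route is shorter.)

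First I would prove \eqref{dpmP}. Applying $\La$ to the displayed relation and using $\La\,\dpp B_s=0$ (since $\dpp B_s$ is primitive) together with Lemma~\ref{llhrelations}(iii), $\La L=(H+R)(R+1)$, which on the primitive form $\dpm B_s$ collapses to multiplication by its $H$-eigenvalue $n-s+1$, gives
\[
\La d\,B_s=(n-s+1)\,\dpm B_s=H\,\dpm B_s ,
\]
where in the last expression $H$ is evaluated on $\La d\,B_s\in\CB^{s-1}(M)$. Hence $\dpm=H^{-1}\La d$, which is \eqref{dpmP}. Substituting back, $\dpp B_s=dB_s-L\,\dpm B_s=(d-L\,H^{-1}\La d)B_s$, giving \eqref{dppP}. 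For the final assertion, since $\dl=d\La-\La d$ and $\La B_s=0$ for a primitive form, $\dl B_s=-\La d\,B_s=-H\,\dpm B_s$, i.e.\ $\dl=-H\,\dpm$ on $\CB^*(M)$.

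It remains to establish \eqref{dppmP}. From \eqref{dppmdef} we have the operator identity $d\dl=-(H+2R+1)\,\dpp\dpm$; since $(H+2R+1)$ is invertible (its eigenvalue $n-s+1$ is strictly positive), applying $(H+2R+1)^{-1}$ gives $\dpp\dpm=-(H+2R+1)^{-1}d\dl$, which on $\CB^s(M)$ reads $\dpp\dpm=-(H+1)^{-1}d\dl$; and $d\dl=d(d\La-\La d)=-d\La d$ because $d^2=0$, so $\dpp\dpm=(H+1)^{-1}d\La d$, completing \eqref{dppmP}. The computations are routine $sl_2$ bookkeeping; the only point genuinely needing attention is that the inverse operators $H^{-1}$ and $(H+1)^{-1}$ in \eqref{dppP}--\eqref{dppmP} are well defined — this holds because $\La d$ lowers form degree by one, so $H^{-1}\La d$ and $d\La d$ only ever evaluate $H$, respectively $H+1$, on primitive forms of degree $\leq n-1$, respectively $\leq n$, where the eigenvalue $n-s+1\geq 1$ is positive — and one must track which graded component each rational operator is applied to, per the paper's convention.
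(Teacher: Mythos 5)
Your argument is correct: the degree bookkeeping for $H$ and $H+1$ is handled properly, and the identities all follow as you say from the defining relation $dB_s=\dpp B_s+L\,\dpm B_s$, the $sl_2$ relations, and the specialization $R=0$ of \eqref{dlLr}, \eqref{dldecomp} and \eqref{dppmdef}. This is exactly the route the paper intends (it states the lemma without a separate proof, as an immediate restriction of the general $\CL^{r,s}$ formulas to the primitive subspace), so nothing further is needed.
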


And finally, to conclude this subsection, let us note that the elements on the symplectic pyramid can be connected by first-order differential operators as follows:
\bes
\xymatrix{\CL^{r-1,s+1} &  & ~~\CL^{r,s+1} \\
  & \CL^{r,s} \ar[ur]^{\dpp}  \ar[dr]_{L\,\dpm} \ar[dl]^{\dpm} \ar[ul]_{\dpp \,\La}&   \\
  \CL^{r,s-1} &  & ~~~~\CL^{r+1,s-1} ~. 
}
\ees
In the above diagram, the right-pointing arrows with operators $(\dpp\,,\, L\, \dpm)$ are associated with $d\,$, while the left-pointing ones $(\La\,\dpp\,, \,\dpm)$ are associated with $\dl\,$.  From the diagram, we have 
two natural sets of differential complexes.
\bes
\xymatrix@1{ \CL^{r,0} ~ \ar[r]^{\dpp} &~ \CL^{r,1} ~ \ar[r]^{\dpp} &~ ~\ldots~ ~\ar[r]^{\dpp}&~ \CL^{r,n-r-1}~ \ar[r]^{\dpp} &~ \CL^{r,n-r}~,}
\ees
\bes
\xymatrix@1{ \CL^{r,0} ~  &~ \CL^{r,1} ~ \ar[l]_{\dpm} &~ ~\ldots~ ~\ar[l]_{\dpm}&~ \CL^{r,n-r-1}~\ar[l]_{\dpm}&~ \CL^{r,n-r}~\ar[l]_{~\dpm}~.}
\ees
We can construct cohomologies with them.  Define
\bes
H^{r,s}_{\dpp} (M)= \frac{\ker \dpp \cap \CL^{r,s} }{\dpp \CL^{r,s-1}} \ 
\ees
and
\bes
H^{r,s}_{\dpm} (M)= \frac{\ker \dpm \cap \CL^{r,s} }{\dpm \CL^{r,s+1}} \ , 
\ees
for $r<n-s\,$.  But by the commutativity of $\dppm$ with $L$, we have $H^{r,s}_{\dpp}(M) \cong H^{0,s}_{\dpp}(M)$ and $H^{r,s}_{\dpm}(M) \cong H^{0,s}_{\dpm}(M)\,$ for any $r<n-s$.  Hence, we will focus on the two primitive cohomologies
\bes
PH^s_{\dpp} (M)= \frac{\ker \dpp \cap \CB^{s} }{\dpp \CB^{s-1}} \ 
\ees
and
\bes
PH^s_{\dpm} (M)= \frac{\ker \dpm \cap \CB^{s} }{\dpm \CB^{s+1}} \ , 
\ees
for $s<n\,$.

Besides these two cohomologies, let us just note that two other primitive cohomologies were introduced in Paper I \cite{TY1}; they can be found in Table \ref{tabcoh} in Section 1, expressed in terms of $\dpp$ and $\dpm\,$. 

\

\subsection{A symplectic elliptic complex}

We now show that $PH^*_{\dppm}(M)$ is finite dimensional.  Since we have naturally the two differential complexes, 
\bes
\xymatrix@1{ 0~\ar[r] & ~ \CB^{0} ~ \ar[r]^{\dpp} &~ \CB^{1} ~ \ar[r]^{\dpp} &~ ~\ldots~ ~\ar[r]^{\dpp}&~ \CB^{n-1}~ \ar[r]^{\dpp} &~ \CB^{n}~,}
\ees
\bes
\xymatrix@1{0~&~ \CB^{0} ~\ar[l]_{\dpm}  &~ \CB^{1} ~ \ar[l]_{\dpm} &~ ~\ldots~ ~\ar[l]_{\dpm}&~ \CB^{n-1}~\ar[l]_{\dpm}&~ \CB^{n}~\ar[l]_{~\dpm}~,}
\ees
it is interesting to ask whether they are elliptic.  Unfortunately, these two complexes are not elliptic: the ellipticity property breaks down at $\CB^n$ since $\dpp$ maps all primitive $n$-forms to zero, and for $\dpm\,$, there is no primitive $\CB^{n+1}$ space.  We may try to consider connecting the two complexes by joining them as follows:
\bes
\xymatrix@1{\ldots  ~ \ar[r]^\dpp  & ~ \CB^{n-1}~ \ar[r]^{\dpp} &~ \CB^{n} ~\ar[r]^{\dpm} &  ~ \CB^{n-1}~\ar[r]^{\dpm}& ~\ldots.}
\ees
But such a combined complex is unfortunately no longer a differential complex, as $\dpm\dpp\neq 0\,$.  Fortunately, there is a way to obtain a differential elliptic complex if we utilize the second-order differential operator $\dpp\dpm\,$.

\begin{prop}\label{ellipticsc}The following complex is elliptic:\footnote{After proving this proposition, we searched the literature for any mention of such a symplectic elliptic complex.  We found only that the simple $n=2$, $d=4$ case has appeared in \cite{Smith}.  It was presented there as an example of an elliptic complex that does not imply the corresponding local Poincar\'e lemmas (which we also had found and is described here in Proposition \ref{dpplemma}).  Recently, M. Eastwood has informed us that he has also independently arrived at such a complex \cite{Eastwood}.}
\begin{equation}\label{exseq}
\xymatrix@R=30pt@C=30pt{
0\; \ar[r]^\dpp & \; \CB^0 \;\ar[r]^\dpp &\; \CB^1 \;\ar[r]^\dpp& ~ \ldots ~\ar[r]^\dpp&\; \CB^{n-1} \;\ar[r]^\dpp&\; \CB^{n} \;\ar[d]^{\dpp\dpm}\\
0\; & \; \CB^0 \;\ar[l]_\dpm &\; \CB^1 \;\ar[l]_\dpm& ~ \ldots ~\ar[l]_\dpm&\; \CB^{n-1} \;\ar[l]_\dpm&\; \CB^{n} \ar[l]_\dpm \; 
}
\end{equation}
\end{prop}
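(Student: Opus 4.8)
The plan is to verify ellipticity directly from the definition: for each $x\in M$ and each nonzero $\xi\in T^*_xM$, the sequence obtained from \eqref{exseq} by replacing every operator with its principal symbol at $\xi$ must be exact. Since $\dpp$ and $\dpm$ are assembled from $d$ and the zeroth‑order operators $L,\La,H$ (Lemma \ref{primdiff}), only the $d$'s contribute, so the symbols are algebraic and depend on $\xi$ alone. Running the truncation argument around \eqref{dprimexpl} for $\xi\w B_s$ (using $\xi\w(\xi\w B_s)=0$ in place of $d^2=0$) shows that $\xi\w B_s$ again has only two Lefschetz components, $\xi\w B_s=C^0_{s+1}+L\,C^1_{s-1}$, and one reads off
\[
\s_\xi(\dpp)(B_s)=C^0_{s+1}=P_0(\xi\w B_s),\qquad \s_\xi(\dpm)(B_s)=C^1_{s-1}~,
\]
where $P_0$ is the Lefschetz projection onto the primitive component, cf. \eqref{aprojb}. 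Because the principal symbol of a composition is the composition of the symbols, $\s_\xi(\dpp\dpm)=\s_\xi(\dpp)\circ\s_\xi(\dpm)$. Thus the proposition reduces to linear symplectic algebra on the primitive exterior algebra $P\bw^\bullet V$ of $V=T^*_xM$.

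I would then make the symbols explicit in a frame adapted to $\xi$. Non‑degeneracy of $\om$ supplies a Darboux coframe $e_1,\dots,e_{2n}$ with $\xi=e_1$ and $\om=e_{12}+\om'$, where $\om'=e_{34}+\dots+e_{2n-1,2n}$ is the symplectic form on $V'=\langle e_3,\dots,e_{2n}\rangle$; write $L',\La'$ for the associated operators and note that $e_1\w\om'\w\eta=L(e_1\w\eta)$ for all $\eta$. Lemma \ref{lpvectd} then gives the direct‑sum decomposition
\[
P\bw^kV=(e_1\w P\bw^{k-1}V')\ \oplus\ (e_2\w P\bw^{k-1}V')\ \oplus\ (\gamma_k\w P\bw^{k-2}V')\ \oplus\ P\bw^kV'~,
\]
with $\gamma_k$ the combination $e_{12}-\tfrac1{H+1}\sum_j e_{2j-1,2j}$ appearing there. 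Computing the two primitive Lefschetz components of $e_1\w\mu_k$ for $\mu_k$ written in these four blocks — using $e_1\w e_1=0$, the identity just noted, and \eqref{aprojb} — I expect clean "block" behaviour: up to nonzero scalars, $\s_\xi(\dpp)$ carries the $e_2$‑block onto the $\gamma$‑block and the $P\bw^kV'$‑block onto the $e_1$‑block while annihilating the $e_1$‑ and $\gamma$‑blocks, and $\s_\xi(\dpm)$ carries the $e_2$‑block onto the $P\bw^{k-1}V'$‑block and the $\gamma$‑block onto the $e_1$‑block while annihilating the other two. Granting these, comparing kernels with images block by block shows that both the $\dpp$‑ and $\dpm$‑parts of the symbol sequence are exact at every $\CB^s$ with $0\le s<n$ (including the two end terms $\CB^0$), the two pieces failing only at their common degree‑$n$ spot, precisely because $P\bw^nV'=0$ there forces $\s_\xi(\dpp)$ to vanish on $\CB^n$.

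The crux is the junction at middle degree: one must show that the single second‑order operator $\dpp\dpm$ simultaneously restores exactness at \emph{both} copies of $\CB^n$, that is $\ker\s_\xi(\dpp\dpm)\cap\CB^n=\im\bigl(\s_\xi(\dpp)\colon\CB^{n-1}\to\CB^n\bigr)$ and $\ker\bigl(\s_\xi(\dpm)\colon\CB^n\to\CB^{n-1}\bigr)=\im\s_\xi(\dpp\dpm)$. On $\CB^n$ only the three blocks $e_1\w P\bw^{n-1}V'$, $e_2\w P\bw^{n-1}V'$, $\gamma_n\w P\bw^{n-2}V'$ survive, and feeding the block formulas through $\s_\xi(\dpp\dpm)=\s_\xi(\dpp)\circ\s_\xi(\dpm)$ should identify this composite on $\CB^n$ with ``project onto the $e_2$‑block, then move it to the $e_1$‑block'': its kernel is then the $e_1$‑ and $\gamma$‑blocks — exactly $\im\s_\xi(\dpp)|_{\CB^{n-1}}$ — and its image is the $e_1$‑block — exactly $\ker\s_\xi(\dpm)|_{\CB^n}$. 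That both identities drop out of the same short calculation is the one non‑formal point, and it is precisely why the second‑order operator must be inserted in \eqref{exseq}; everything else is bookkeeping inside the four‑block decomposition of Lemma \ref{lpvectd}.
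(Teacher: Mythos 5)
Your proposal is correct and follows essentially the same route as the paper's proof: reduce to the symbol sequence, normalize $\xi=e_1$ by an $Sp(2n)$ frame choice, apply the four-block decomposition of Lemma \ref{lpvectd}, and check exactness block by block, with $\b_4=0$ at $k=n$ handling the junction; the block actions you ``expect'' for $\s_\xi(\dpp)$, $\s_\xi(\dpm)$, and $\s_\xi(\dpp\dpm)$ are exactly what the paper computes in \eqref{ima}--\eqref{kerc}. The only small imprecision is your stated reason that $\s_\xi(\dpp)$ vanishes on $\CB^n$ (it is the absence of primitive $(n+1)$-forms on $V$, not merely $P\bw^nV'=0$, that kills the $e_2$-block's image), but this does not affect the argument.
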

\begin{proof}
Clearly, the above is a differential complex.  We need to show that the associated symbol complex is exact at each point $x\in M$.   Let $\xi \in T^*_x-\{0\}$.  By an $Sp(2n)$ transformation, we can set $\xi=e_1\,$ and take the symplectic form to be $\om = e_{12} + e_{34} + \ldots + e_{2n-1,2n}$ with $e_1,\dots,e_{2n}$ providing a basis for $T^*_x\,$.  For $\mu_k \in P\!\bw^kT^*_x$, an element in the primitive exterior vector space, we can use the decomposition of Lemma \ref{lpvectd} to write
\be\label{pvectd}
\mu_k = e_1 \w \b_1 + e_2 \w \b_2 + (e_{12} -  \frac{1}{H+1} \sum_{j=2}^n e_{2j-1, 2j}) \w \b_3 + \b_4~,
\ee
where $\b_1,\b_2, \b_3,\b_4\in P\!\bw^*T^*_x$ are primitive exterior products involving only $e_3, e_4,\ldots,e_{2n-1},e_{2n}\,$.  Note that when $k=n$, then $\b_4=0$ since there are no primitive $n$-form without either $e_1$ or $e_2\,$.  

From Lemma \ref{primdiff}, the symbol of the differential operators are given by
\begin{align*}
\s(\dpp)(x,\xi)\, \mu &= (1- L H^{-1} \La) (\xi \w \mu)~, \\
\s(\dpm)(x,\xi)\,\mu &=H^{-1}\La (\xi \w \mu)~,\\
\s(\dpp\dpm)(x,\xi)\,\mu &= (H+1)^{-1} [\xi \w( \La (\xi \w \mu))]~.
\end{align*}
Letting $\xi=e_1$ and $\mu$ taking the form of \eqref{pvectd}, we have that
\begin{align}
\im{\s(\dpp)} &= \left\{(e_{12} -\frac{1}{H+1} \sum_{j=2}^n e_{2j-1, 2j})\w \b_2  ,  e_1 \w \b_4\right\}\label{ima}~,\\
\im{\s(\dpm)} &= \left\{\b_2 ,  e_1 \w \b_3\right\} \label{imb}~,\\
\im{\s(\dpp\dpm)} &= \left\{e_1 \w \b_2\right\} \label{imc}~,
\end{align}
which imply 
\begin{align}
\ker{\s(\dpp)} &= \left\{e_1 \w \b_1 ,  (e_{12} -\frac{1}{H+1} \sum_{j=2}^n e_{2j-1, 2j})\w \b_3 \right\}~, \label{kera}\\
\ker{\s(\dpm)} & = \left\{e_1 \w \b_1 , \b_4\right\}~, \label{kerb} \\
\ker{\s(\dpp\dpm)} & = \left\{e_1\w \b_1 , (e_{12} -\frac{1}{H+1} \sum_{j=2}^n e_{2j-1, 2j})\w \b_3 , \b_4\right\}~.\label{kerc}
\end{align}
Comparing \eqref{kera}-\eqref{kerc} with \eqref{ima}-\eqref{imc}, and noting that for $k=n$, $\b_4=0$, we find that the symbol sequence is exact, i.e., $\ker \s({D_i}) = \im \s(D_{i-1})\,$, as required.
\end{proof}

With an elliptic complex, the associated cohomologies are finite dimensional.  The finiteness of 
$$PH^n_{d+\dl}(M)=\frac{\ker \dpm \cap \CB^n(M)}{\im \dpp\dpm \cap \CB^n(M)}~,\qquad PH^n_{d\dl}(M)=\frac{\ker \dpp\dpm \cap \CB^n(M)}{\im \dpp \cap \CB^n(M)}~, $$
were proved previously in Paper I \cite{TY1}.  But we have now also shown the finiteness of $PH^k_{\dppm}(M)\,$.
\begin{cor} The cohomologies $PH^k_\dpp(M)$ and $PH^k_\dpm(M)$ for $0\leq k<n$ are finite dimensional.
\end{cor}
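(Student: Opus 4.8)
The plan is to read the corollary off directly from Proposition~\ref{ellipticsc}. Recall the standard principle that an elliptic complex of differential operators between smooth sections of vector bundles over a \emph{compact} manifold has finite-dimensional cohomology at each of its terms: one introduces a Riemannian metric, forms the associated Laplacians $\Delta_i = D_i^* D_i + D_{i-1} D_{i-1}^*$, notes they are elliptic because the symbol sequence is exact, and invokes elliptic regularity to identify each cohomology group with the finite-dimensional space of $\Delta_i$-harmonic sections. Since $M$ is compact by hypothesis and Proposition~\ref{ellipticsc} establishes that the complex \eqref{exseq} is elliptic, all of its cohomology groups are finite dimensional; the only thing left to do is to recognize these groups.

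First I would observe that \eqref{exseq} is genuinely a complex of differential operators between sections of vector bundles: the $\CB^k(M)$ are the smooth sections of the primitive subbundles $P\bw{\!}^k T^*M$, whose fibrewise description was given in Lemma~\ref{lpvectd}, and $\dpp,\dpm,\dpp\dpm$ are differential operators by Lemma~\ref{primdiff}. That $\dpp\dpm$ has order two while the others have order one is harmless, since ellipticity is a statement about the symbol sequence only, already verified in the proof of Proposition~\ref{ellipticsc}. Then I would match terms. In the top ($\dpp$) row, the slot $\CB^k$ with $1\le k\le n-1$ receives $\dpp:\CB^{k-1}\to\CB^k$ and emits $\dpp:\CB^k\to\CB^{k+1}$, so the cohomology there is $\ker\dpp\cap\CB^k\big/\,\dpp\,\CB^{k-1}=PH^k_\dpp(M)$; at the initial slot $k=0$ the incoming arrow is the zero map out of $0$, and since $\CB^{-1}(M)=0$ the cohomology is $\ker\dpp\cap\CB^0=PH^0_\dpp(M)$. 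Symmetrically, in the bottom ($\dpm$) row the slot $\CB^k$ with $0\le k\le n-1$ receives $\dpm:\CB^{k+1}\to\CB^k$ and emits $\dpm:\CB^k\to\CB^{k-1}$ (the zero map into $0$ when $k=0$), giving cohomology $\ker\dpm\cap\CB^k\big/\,\dpm\,\CB^{k+1}=PH^k_\dpm(M)$.

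Putting these together, every $PH^k_\dpp(M)$ and $PH^k_\dpm(M)$ with $0\le k<n$ appears as the cohomology of the elliptic complex \eqref{exseq} at one of its terms, and is therefore finite dimensional. The two remaining slots, the pair of $\CB^n$'s joined by $\dpp\dpm$, produce instead $PH^n_{d\dl}(M)$ and $PH^n_{d+\dl}(M)$, whose finiteness was already established in Paper~I \cite{TY1}; this is exactly why the middle degree is excluded from the statement. I do not expect any genuine obstacle: all the substance of the result lies in Proposition~\ref{ellipticsc}, and what remains is purely bookkeeping at the two ends of the complex, the one point deserving a little care being the convention $\CB^{-1}(M)=0$ that makes the $k=0$ cohomology come out correctly.
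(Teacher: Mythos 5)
Your proposal is correct and follows exactly the paper's route: the corollary is deduced directly from Proposition \ref{ellipticsc} together with the standard fact that an elliptic complex over a compact manifold has finite-dimensional cohomology at each term, with the two $\CB^n$ slots accounting for $PH^n_{d\dl}(M)$ and $PH^n_{\dpdl}(M)$ already handled in Paper I. The paper states this in one line; your identification of the cohomology at each slot of \eqref{exseq} is just the bookkeeping it leaves implicit.
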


\section{Properties of $PH_\dppm(M)$}

\subsection{Primitive harmonic forms and isomorphism of $PH_\dpp(M)$ and $PH_\dpm(M)$}

To analyze the properties of $PH^*_\dppm(M)\,$, we shall make use of a compatible triple $(\om, J,  g)$ of symplectic form, almost complex structure, and Riemannian metric, present on all symplectic manifolds.  The Riemannian metric $g$ gives us the standard inner product on differential forms
\be\label{imet}
(A,A') =  \int_M A \w * A' = \int_M g(A, A')\, d{\rm vol} \,, \qquad  A, A' \in \CA^k(M)~.
\ee
With an inner product, we can define the adjoint operators $(\dpps, \dpms)\,$.  They can easily be expressed in terms of $\ds$ and $\dls$ using Lemma \ref{dppmdeff}.
\begin{lem}\label{dppmsdeff}On a symplectic manifold $(M,\om)$ with a compatible Riemannian metric $g$, the adjoints ($\dpps , \dpms$) take the form
\begin{align}
\dpps &= \left[\ds (H+R+1) + \dls \La \right] \frac{1}{H+2R +1}~, \label{dppsdef}\\
\dpms &=- \left[\dls - \ds  \frac{1}{H+R+1}L\right] \frac{1}{H+2R+1}~. \label{dpmsdef}
\end{align}
\end{lem}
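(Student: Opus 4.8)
The plan is to read off \eqref{dppsdef} and \eqref{dpmsdef} by taking the $g$-adjoints of the closed-form expressions for $\dpp$ and $\dpm$ furnished by Lemma \ref{dppmdeff}. Three facts about the inner product \eqref{imet} enter. First, by definition $\ds = d^*$ and $\dls = (\dl)^*$. Second, since the triple $(\om, J, g)$ is compatible, one has the standard Lefschetz adjointness $L^* = \La$, and hence $\La^* = L$. Third, the Lefschetz decomposition is $g$-orthogonal and forms of distinct degree are orthogonal, so the spaces $\CL^{r,s}(M)$ are mutually orthogonal; consequently any operator acting as a scalar on each $\CL^{r,s}(M)$ — in particular $H$, $R$, and every rational expression in them such as $\tfrac{1}{H+2R+1}$, $\tfrac{1}{H+R}$, $\tfrac{1}{H+R+1}$ — is self-adjoint.

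For $\dpps$, view $\dpp = \tfrac{1}{H+2R+1}\big[(H+R+1)d + L\dl\big]$ as a product $AB$ with $A = \tfrac{1}{H+2R+1}$ self-adjoint by the third fact. Then $\dpps = B^*A^* = B^*\,\tfrac{1}{H+2R+1}$ with $B^* = d^*(H+R+1) + \dl^*L^* = \ds(H+R+1) + \dls\La$, which is precisely \eqref{dppsdef}.

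For $\dpms$, the same step applied to $\dpm = \tfrac{-1}{(H+2R+1)(H+R)}\big[(H+R)\dl - \La d\big]$ gives
\bes
\dpms = -\big[\dls(H+R) - \ds L\big]\,\frac{1}{(H+2R+1)(H+R)}~,
\ees
and this must be simplified. The first term collapses at once, since $(H+R)$ commutes with the scalar operator $\tfrac{1}{(H+2R+1)(H+R)}$, yielding $-\dls\,\tfrac{1}{H+2R+1}$. For the second term one uses two elementary relations, both immediate from the eigenvalue identities $(H+2R)\,L^{r,s} = (n-s)\,L^{r,s}$ and $R\,L^{r,s} = r\,L^{r,s}$: the operator $L$ commutes with $\tfrac{1}{H+2R+1}$ (it leaves the $s$-index fixed), and $(H+R+1)L = L(H+R)$, i.e.\ $L\,\tfrac{1}{H+R} = \tfrac{1}{H+R+1}\,L$. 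Combining, $\ds L\,\tfrac{1}{(H+2R+1)(H+R)} = \ds\,\tfrac{1}{H+R+1}\,L\,\tfrac{1}{H+2R+1}$, so that $\dpms = -\big[\dls - \ds\,\tfrac{1}{H+R+1}L\big]\tfrac{1}{H+2R+1}$, which is \eqref{dpmsdef}. As in Lemma \ref{dppmdeff}, the factor $\tfrac{1}{H+R}$ is applied only to forms lying in $\im \dl$ or $\im \La d$, where $H+R$ is invertible, so every expression above is well defined.

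The computation is essentially bookkeeping; the genuine inputs are the adjointness $L^* = \La$ (which is where compatibility of $(\om,J,g)$ is used) and the orthogonality of the Lefschetz decomposition. The only point needing real care is that the rational operators in $H$ and $R$ do \emph{not} commute with $\ds$, $\dls$, $L$, or $\La$; the simplification of $\dpms$ rests precisely on the two commutation identities displayed above, and no further obstacle appears.
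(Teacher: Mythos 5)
Your proposal is correct and follows exactly the route the paper intends: the paper gives no written proof beyond the remark that the adjoints follow from Lemma \ref{dppmdeff}, and you carry out precisely that computation, with the two genuine inputs ($L^*=\La$ from compatibility of the triple, and the self-adjointness of rational functions of $H$ and $R$ via orthogonality of the $\CL^{r,s}$) correctly identified and the commutation bookkeeping for the $\dpms$ simplification done right.
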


\

With the adjoint operators at hand, we can define the associated harmonic forms for $\dppm$ operators.  The natural $\dppm$ Laplacian is the second-order differential operator 
\be\label{dppmLap}
\Delta_\dppm = \dppm (\dppm)^* + (\dppm)^* \dppm~,
\ee
which leads to the following definition:

\begin{dfn}A primitive differential form $B_k \in \CB^k(M)$ for $0\leq k<n$ is called {\it $\dppm$-harmonic} if  $\Delta_{\dppm} B= 0$, or equivalently, 
\be\label{cohahc}
\dppm B_k = 0~, ~~\quad {\rm and} \qquad  (\dppm)^* B_k =0~.
\ee
We denote the space of $\dppm$-harmonic $k$-forms by $P\CH_{\dppm}^k(M)\,$.
\label{dp}
\end{dfn}

Now the elliptic complex \eqref{exseq} implies that $\Delta_{\dppm}$ are elliptic operators.  Thus, applying Hodge theory, we immediately have the following theorem:

\begin{thm}  Let $M$ be a compact symplectic manifold.  For any compatible triple $(\om, J, g)$, we define the standard inner product on $\CB^k(M)$ with respect to $g$.  Then, for $0\leq k<n$:
\begin{itemize}
\setlength{\parsep}{0pt}
\setlength{\itemsep}{0pt}
\item[{\rm(i)}]
$\dim \CH_{\dppm}^k(M) < \infty\,$.
\item[{\rm(ii)}] There is an orthogonal decomposition:
\be\label{cohadecomp}
\CB^k = P\CH^k_{\dppm}  \oplus \dppm \CB^{k\pm 1} \oplus (\dppm)^* \CB^{k\mp 1} \,.
\ee
\item[{\rm(iii)}] There is a canonical isomorphism: $P\CH^k_{\dppm}(M) \cong PH^k_{\dppm}(M)$\,.
\end{itemize}
\label{dppmellip}
\end{thm}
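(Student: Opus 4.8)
The plan is to obtain this theorem as a direct application of the standard Hodge-theoretic package for elliptic complexes, treating the complex \eqref{exseq} as a single complex of length $2n+2$ (with the two middle $\CB^n$ terms joined by $\dpp\dpm$) and being careful about which degree $k<n$ sits where in the complex. First I would fix a compatible triple $(\om,J,g)$ and the inner product \eqref{imet}, and recall that by Proposition \ref{ellipticsc} the complex \eqref{exseq} is elliptic; hence each of the operators $\dpp$, $\dpm$, $\dpp\dpm$ appearing in it has a well-behaved formal adjoint, and the associated Laplacian at each node is a self-adjoint elliptic operator (the symbol of the Laplacian is an isomorphism because the symbol sequence computed in the proof of Proposition \ref{ellipticsc} is exact). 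For a node occupied by $\CB^k$ with $0\le k<n$ in the ``$\dpp$-half'' of \eqref{exseq}, the node Laplacian is exactly $\Delta_{\dpp}=\dpp\dpps+\dpps\dpp$; for the same $\CB^k$ appearing in the ``$\dpm$-half'' it is $\Delta_{\dpm}$. (For $k<n$ the second-order operator $\dpp\dpm$ never touches these nodes, so no cross terms appear.) This is the point to verify carefully rather than assert.

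Next, for part (i), ellipticity of $\Delta_{\dppm}$ on a compact manifold immediately gives $\dim P\CH^k_{\dppm}(M)<\infty$ since the kernel of an elliptic operator on a compact manifold is finite-dimensional, and the equivalence of $\Delta_{\dppm}B=0$ with the pair of equations \eqref{cohahc} is the usual integration-by-parts argument already recorded in Definition \ref{dp}. For part (ii), I would invoke the general Hodge decomposition theorem for an elliptic complex: at each node $\CB^k$ one has the orthogonal decomposition $\CB^k=\ker\Delta_{\dppm}\oplus\im\Delta_{\dppm}$, and then splitting $\im\Delta_{\dppm}$ using $\Delta_{\dppm}=\dppm\dppms+\dppms\dppm$ together with the fact that on a compact manifold $\im\dppm$ and $\im\dppms$ are closed and mutually orthogonal (orthogonality is formal: $\langle \dppm a,\dppms b\rangle=\langle \dppm\dppm a,b\rangle=0$ by $(\dppm)^2=0$ from Lemma \ref{pmprop}(i)), yields exactly \eqref{cohadecomp}, with the $\pm$ and $\mp$ signs recording that $\dpp$ raises and $\dpm$ lowers the primitive degree. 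One small check: in \eqref{cohadecomp} the incoming and outgoing arrows at $\CB^k$ are both copies of $\dppm$ (not of the composite operator), which is true precisely because $k<n$ so we are strictly inside one of the two half-lines of \eqref{exseq}.

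For part (iii) the argument is again the standard one: given $B\in\CB^k$ with $\dppm B=0$, write its Hodge decomposition $B=B_{\CH}+\dppm\alpha+\dppms\beta$; applying $\dppm$ and using closedness plus orthogonality forces $\dppm\dppms\beta=0$, hence $\|\dppms\beta\|^2=\langle\dppm\dppms\beta,\beta\rangle=0$, so $\dppms\beta=0$ and $B=B_{\CH}+\dppm\alpha$ with $B_{\CH}$ uniquely determined by $B$. This gives a well-defined linear map $PH^k_{\dppm}(M)\to P\CH^k_{\dppm}(M)$, $[B]\mapsto B_{\CH}$; it is surjective because a harmonic form is in particular $\dppm$-closed, and injective because $B_{\CH}=0$ means $B=\dppm\alpha$ is $\dppm$-exact. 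Combining, $PH^k_{\dppm}(M)\cong P\CH^k_{\dppm}(M)$, which together with (i) also reproves finiteness of $PH^k_{\dppm}(M)$ (the Corollary). I expect the only real subtlety — and the step I would dwell on — is the bookkeeping that isolates the nodes $\CB^k$, $k<n$, from the middle of the complex so that the relevant node Laplacian really is $\Delta_{\dppm}$ as defined in \eqref{dppmLap} and not some operator twisted by the composite $\dpp\dpm$; once that is pinned down, everything else is the textbook elliptic-complex Hodge theory (cf.\ the treatment for the Dolbeault complex), and the adjoint formulas of Lemma \ref{dppmsdeff} are not even needed beyond knowing the adjoints exist.
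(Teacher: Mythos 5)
Your proposal is correct and follows essentially the same route as the paper, which simply observes that the ellipticity of the complex \eqref{exseq} makes $\Delta_{\dppm}$ elliptic and then invokes standard Hodge theory; your extra bookkeeping confirming that for $k<n$ the node Laplacians are exactly $\Delta_{\dpp}$ and $\Delta_{\dpm}$ (untouched by the middle operator $\dpp\dpm$) is the same implicit point the paper relies on.
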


Having demonstrated the finiteness of $PH_\dppm(M)$, let us compare the solution space of $\dppm$-harmonic forms.  We will need to make use of the almost complex structure $J$ and the relation between the Hodge star operator and the symplectic star operator \cite{TY1} given by
\be \label{srels}
* =  \CJ\, \ss ~~,
\ee
where
\bes
\CJ=\sum_{p,q} (\sqrt{-1}\,)^{p-q} \ \Pi^{p,q}
\ees
projects a $k$-form onto its $(p,q)$ parts times the multiplicative factor $(\sqrt{-1}\,)^{p-q}$.  Interestingly, we find that $(\dpp, \dpps)$ is $\CJ$-conjugate to $(\dpms,\dpm)$ up to a non-zero constant.

\begin{lem}\label{pmequiv}
For a compatible triple $(\om, J, g)$ on a symplectic manifold, 
\begin{align}
\CJ \,\dpp \,\CJ^{-1} & = \dpms\,(H+R)~,\label{cj1} \\
\CJ \, \dpps \,\CJ^{-1} & = (H+R)\, \dpm ~.\label{cj2}
\end{align}
\end{lem}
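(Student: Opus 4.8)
The plan is to prove the two $\CJ$-conjugation identities by reducing everything to the primitive subspace $\CB^* = \CL^{0,*}$, where the simplified expressions of Lemma \ref{primdiff} apply, and then to track carefully how $\CJ$ interacts with the building blocks $d$, $\La$, $L$, and $H$. First I would record the commutation properties of $\CJ$ with the $sl_2$ operators: since $\CJ$ acts by the scalar $(\sqrt{-1})^{p-q}$ on $(p,q)$-forms and $L = \om\w$ is of type $(1,1)$, one has $\CJ L = L \CJ$, and likewise $\CJ \La = \La \CJ$ because $\La$ lowers $(p,q)$-degree by $(1,1)$; also $\CJ$ commutes with $H$ and $R$ since these only see total degree and the Lefschetz filtration, both of which $\CJ$ preserves. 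The only nontrivial ingredient is how $\CJ$ conjugates $d$: using $d = \pa + \bpa$ on an almost complex manifold (up to the Nijenhuis pieces $\mu,\bar\mu$ which will have to be handled or absorbed), $\CJ \pa \CJ^{-1} = \sqrt{-1}\,\pa$ and $\CJ \bpa \CJ^{-1} = -\sqrt{-1}\,\bpa$, so $\CJ d \CJ^{-1}$ is essentially $\sqrt{-1}(\pa - \bpa) = \dc$, the complexified differential. This is precisely the relation $* = \CJ \ss$ in \eqref{srels} unpacked: combined with $\dl = (-1)^{k+1}\ss d\ss$ and $d^* = (-1)^{k+1} * d *$ (Riemannian adjoint), one gets the clean operator identity $\CJ d \CJ^{-1}$ expressed through $d$ and $\dl$, or dually $d^* = \CJ \dl \CJ^{-1}$ up to signs.

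The core of the argument is then the following. Using Lemma \ref{primdiff}, on primitive forms $\dpp = d - LH^{-1}\La d$ and $\dpm = H^{-1}\La d$. I would conjugate the right side of \eqref{cj1} by rewriting $\dpms$ via Lemma \ref{dppmsdeff}, restricted to the relevant $\CL^{r,s}$ blocks where $R$ and $H$ act as scalars, so that $\dpms = -[\dls - \ds (H+R+1)^{-1} L](H+2R+1)^{-1}$; then $\dpms(H+R)$ becomes an explicit combination of $\dls$ and $\ds L$. On the other hand $\CJ \dpp \CJ^{-1} = \CJ d \CJ^{-1} - L H^{-1}\La (\CJ d \CJ^{-1})$, using that $\CJ$ commutes with $L$, $\La$, $H$. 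So the whole identity \eqref{cj1} reduces to showing that $\CJ d \CJ^{-1}$, fed through the projector $(1 - LH^{-1}\La)$, equals the $d^*, d^{\Lambda*}$ combination on the right — and this is exactly the content of $* = \CJ\ss$ together with the definitions of $\dl$ and the adjoints. The second identity \eqref{cj2} should follow either by an identical computation starting from $\dpps$ (Lemma \ref{dppmsdeff}) and $\dpm$ (Lemma \ref{primdiff}), or more slickly by taking the adjoint of \eqref{cj1} and using that $\CJ^* = \CJ^{-1}$ (an isometry, since $g(\CJ A, \CJ A') = g(A, A')$) together with the self-adjointness properties of $H$ and $R$: taking adjoints of $\CJ \dpp \CJ^{-1} = \dpms (H+R)$ gives $\CJ^{-1}\dpps\CJ = (H+R)\dpm$, which is \eqref{cj2} after conjugating by $\CJ$ once more and checking $\CJ$ commutes with $(H+R)$.

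The main obstacle I anticipate is bookkeeping the operator ordering between $\CJ$, the scalar operators $H, R$, and the differential operators across different $\CL^{r,s}$ blocks: $d$ and $\dl$ shift $(r,s)$, so the eigenvalues of $H, R$ change under them, and one must be careful that expressions like $\dpms(H+R)$ versus $(H+R)\dpms$ are not interchangeable — the asymmetry between \eqref{cj1} (with $(H+R)$ on the right of $\dpms$) and \eqref{cj2} (with $(H+R)$ on the left of $\dpm$) is real and must be respected. A secondary subtlety is the almost-complex (non-integrable) setting: one does not literally have $d = \pa + \bpa$, so the cleanest route is to avoid the $(\pa,\bpa)$ decomposition entirely and work purely with the established operator identity \eqref{srels}, $* = \CJ\ss$, deriving $\CJ$-conjugation of $d$ from $d^* = (-1)^{k+1}*d*$ and $\dl = (-1)^{k+1}\ss d\ss$ — this sidesteps any Nijenhuis contributions since \eqref{srels} is stated to hold on all symplectic manifolds with a compatible triple. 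With that identity in hand the proof is a finite linear-algebra verification on each primitive degree $k < n$, block by block in $r$, using Lemma \ref{llhrelations} to resolve the $L\La$, $\La L$ products into scalars.
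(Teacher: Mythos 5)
Your proposal is correct and follows essentially the same route as the paper: both arguments express $\dpp$ and $\dpms$ through $d$, $\dl$, $\ds$, $\dls$, $L$, $\La$, $H$, $R$ (Lemmas \ref{dppmdeff} and \ref{dppmsdeff}), use $*=\CJ\ss$ together with $\dl=(-1)^{k+1}\ss d\ss$ and the Riemannian adjoint relations to convert $\CJ$-conjugation into Hodge adjoints, and obtain \eqref{cj2} by taking the adjoint of \eqref{cj1}. Your instinct to discard the $\pa,\bpa$ detour and work purely with \eqref{srels} is exactly what the paper does.
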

\begin{proof}
Acting on a $k$-form, we have
\begin{align*}
\CJ \,\dpp \,\CJ^{-1} & = \CJ\, \frac{1}{H+2R+1}\left[(H+R+1) d + L \dl \right] \CJ^{-1}\\
& = \frac{(-1)^{k+1}}{H+2R+1}\left[(H+R+1) \CJ \ss \dl \ss \CJ^{-1} + \CJ \ss \La d \ss \CJ^{-1} \right] \\
& = \frac{-1}{H+2R+1}\left[(H+R+1) * \dl *+ * \La d * \right]\\
&=\frac{1}{H+2R+1}\left[-(H+R+1)\dls + L \ds\right]\\
&= (H+R+1) \dpms = \dpms (H+R)~,
\end{align*}
where we have used the expressions for $\dpp$ and $\dpms$ in Lemma \ref{dppmdeff} and Lemma \ref{dppmsdeff}, respectively, and also various relations involving $*$ and $\ss$.  In particular, we applied $d= (-1)^{k+1} (\ss \dl \ss)$ and  $L= \ss \La \ss\,$ in line two,  \eqref{srels} and $\ss \CJ^{-1} = * (-1)^k$ in line three,  and $L = (-1)^k * \La *\,$ in line four.  The equivalence of lines four and five can be checked by explicitly calculating their actions on $\CL^{r,s}\,$.   As for the second equation \eqref{cj2}, it can be derived similarly or interpreted simply as the Hodge adjoint of the first equation \eqref{cj1}.
\end{proof}

Thus, by Lemma \ref{pmequiv}, $B_k \in P^k(M)$ is $\dpp$-harmonic if and only if $\CJ B_k$, which is also primitive, is $\dpm$-harmonic.  This implies that the two harmonic spaces are isomorphic, and moreover, by Theorem \ref{dppmellip}(iii), that the two respective primitive cohomologies are also isomorphic.

\begin{prop}\label{dppmiso}
Let $(M,\om)$ be a compact symplectic manifold and let $0\leq k<n\,$.  Then $P\CH^k_{\dpp}(M)\cong P\CH^k_{\dpm}(M)$ and $PH^k_{\dpp}(M)\cong PH^k_{\dpm}(M)\,$.
\end{prop}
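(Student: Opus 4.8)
The plan is to deduce Proposition \ref{dppmiso} directly from Lemma \ref{pmequiv}, essentially for free. The key observation is that the operator $\CJ$ restricts to an invertible operator on the space of primitive $k$-forms $\CB^k(M)$: indeed $\CJ$ commutes with $\La$ (it acts diagonally on the $(p,q)$-decomposition and $\La$ preserves primitivity of each Lefschetz piece), so $\CJ$ maps $\CB^k(M)$ to itself bijectively, with $\CJ^{-1}$ also preserving $\CB^k(M)$.

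First I would establish the isomorphism on harmonic spaces. Suppose $B_k\in P\CH^k_{\dpp}(M)$, i.e. $\dpp B_k=0$ and $\dpps B_k=0$. Applying Lemma \ref{pmequiv}, $\CJ\dpp\CJ^{-1}=\dpms(H+R)$ and $\CJ\dpps\CJ^{-1}=(H+R)\dpm$; since $(H+R)$ is an invertible scalar operator on $\CB^k(M)$ for $k<n$ (with eigenvalue $n-k>0$ on $\CL^{0,k}=\CB^k$), we get $\dpm(\CJ B_k)=0$ and $\dpms(\CJ B_k)=0$, so $\CJ B_k\in P\CH^k_{\dpm}(M)$. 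The same argument with $\CJ^{-1}$ in place of $\CJ$ (or simply inverting) gives the reverse inclusion, so $\CJ: P\CH^k_{\dpp}(M)\xrightarrow{\ \sim\ } P\CH^k_{\dpm}(M)$ is an isomorphism of vector spaces. Then invoking Theorem \ref{dppmellip}(iii), which identifies each primitive cohomology with its harmonic representatives, we conclude $PH^k_{\dpp}(M)\cong PH^k_{\dpm}(M)$.

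The only point requiring a little care — and the closest thing to an obstacle — is verifying that $(H+R)$ and $(H+2R+1)$, which appear as denominators and multipliers throughout Lemma \ref{pmequiv}, act invertibly on the relevant spaces. On primitive forms $\CB^k(M)$ one has $R=0$ and $H$ acts as $(n-k)$, so $(H+R)$ has eigenvalue $n-k\neq 0$ precisely because $k<n$ (this is exactly the range in which $PH^k_{\dppm}$ is defined), and $(H+2R+1)$ has eigenvalue $n-k+1>0$; hence all the operator-valued constants are genuinely invertible and the manipulations above are legitimate. One should also note explicitly that $\CJ B_k$ is again primitive, which is immediate since $\CJ$ commutes with $\La$. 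With these remarks the proof is essentially a one-line consequence of the two conjugation identities together with Hodge theory.
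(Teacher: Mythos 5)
Your proof is correct and follows essentially the same route as the paper: the authors likewise deduce from Lemma \ref{pmequiv} that $B_k$ is $\dpp$-harmonic if and only if $\CJ B_k$ (which is again primitive) is $\dpm$-harmonic, and then invoke Theorem \ref{dppmellip}(iii) to pass from harmonic spaces to cohomologies. Your added remarks on the invertibility of $(H+R)$ for $k<n$ and on $\CJ$ preserving primitivity simply make explicit what the paper leaves implicit.
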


Coupled with the isomorphism of $PH^n_{d+\dl}(M) \cong PH^n_{d\dl}(M)$ \cite{TY1}, we find that the analytical index of the elliptic complex \eqref{exseq} is trivial.  

\begin{cor}
The index of the elliptic complex of \eqref{exseq} is zero.
\end{cor}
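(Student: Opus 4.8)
The plan is to compute the analytical index of \eqref{exseq} directly, as the alternating sum of the dimensions of its cohomology groups, and to observe that the two available isomorphism results — Proposition \ref{dppmiso} together with $PH^n_{d+\dl}(M)\cong PH^n_{d\dl}(M)$ from \cite{TY1} — make every contribution cancel in pairs. All the relevant cohomologies are finite dimensional by the preceding Corollary and by \cite{TY1}, so the index is well defined.

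First I would fix a degree labeling for the terms of \eqref{exseq}. Reading left to right, the spaces are $\CB^0,\CB^1,\dots,\CB^{n-1}$ (connected by $\dpp$), then $\CB^n$ (with $\dpp$ entering and $\dpp\dpm$ leaving), then a second copy of $\CB^n$ (with $\dpp\dpm$ entering and $\dpm$ leaving), then $\CB^{n-1},\dots,\CB^1,\CB^0$ (connected by $\dpm$). Calling these degrees $j=0,1,\dots,2n+1$, the cohomology in degree $j$ is: $PH^j_\dpp(M)$ for $0\le j\le n-1$; $PH^n_{d\dl}(M)=(\ker\dpp\dpm\cap\CB^n)/(\im\dpp\cap\CB^n)$ for $j=n$; $PH^n_{d+\dl}(M)=(\ker\dpm\cap\CB^n)/(\im\dpp\dpm\cap\CB^n)$ for $j=n+1$; and $PH^{\,2n+1-j}_\dpm(M)$ for $n+2\le j\le 2n+1$.

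Next I would write the index as $\sum_{j=0}^{2n+1}(-1)^j\dim H^j$ and reindex the lower row by $k=2n+1-j$, so that the $\dpm$-cohomology in degree $j=2n+1-k$ enters with sign $(-1)^{2n+1-k}=-(-1)^k$. The index then collapses to
\[
\sum_{k=0}^{n-1}(-1)^k\big(\dim PH^k_\dpp(M)-\dim PH^k_\dpm(M)\big)\;+\;(-1)^n\big(\dim PH^n_{d\dl}(M)-\dim PH^n_{d+\dl}(M)\big).
\]
Every term of the first sum vanishes by Proposition \ref{dppmiso}, and the final term vanishes by the isomorphism $PH^n_{d+\dl}(M)\cong PH^n_{d\dl}(M)$ of \cite{TY1}, so the index is $0$.

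There is no real obstacle: the mathematical content is carried entirely by the two earlier isomorphisms, and the only point demanding a little care is the degree bookkeeping in \eqref{exseq} — specifically, verifying that the copy of $\CB^k$ in the $\dpp$-half and the copy in the $\dpm$-half occupy positions of opposite parity (their degrees sum to the odd number $2n+1$), so that the isomorphic cohomologies appear in the alternating sum with opposite signs. One could instead argue via the harmonic representatives of Theorem \ref{dppmellip}(iii) and the $\CJ$-conjugation of Lemma \ref{pmequiv}, but the dimension count is the most transparent route.
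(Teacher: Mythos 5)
Your proof is correct and is essentially the argument the paper intends: the corollary is justified there by the single sentence preceding it, which invokes exactly the two isomorphisms you use, namely Proposition \ref{dppmiso} for $PH^k_{\dpp}(M)\cong PH^k_{\dpm}(M)$ and $PH^n_{d+\dl}(M)\cong PH^n_{d\dl}(M)$ from \cite{TY1}. Your careful bookkeeping of positions and parities simply makes explicit the cancellation the paper leaves implicit.
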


Let us note further that the isomorphism between $PH^k_\dpp(M)$ and $PH^k_{\dpm}(M)$ leads to a natural pairing between the two cohomologies, similar to that for $PH^n_{d+\dl}(M)$ and $PH^n_{d\dl}(M)$ described in Paper I \cite[Prop.\, 3.26]{TY1}.

\begin{prop}On a compact symplectic manifold $(M,\om)$, there is a natural pairing 
\bes
PH^k_{\dpp}(M) \otimes PH^{k}_{\dpm}(M) \longrightarrow  \mathbb{R}~
\ees
defined by 
\bes
B_k \otimes B'_k \longrightarrow \int_M \frac{1}{(n-k)!}\,\om^{n-k} \w B_k \w B'_k~,
\ees
which is non-degenerate.
\end{prop}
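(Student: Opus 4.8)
The plan is to reduce the claimed non-degeneracy to the $\dpm$-isomorphism already established. First I would check that the bilinear form
\[
\langle B_k, B'_k\rangle = \int_M \frac{1}{(n-k)!}\,\om^{n-k}\w B_k\w B'_k
\]
is well defined on cohomology, i.e.\ independent of the choice of representatives. Since $B_k$ is $\dpp$-closed and $B'_k$ is $\dpm$-closed, and both are primitive of degree $k<n$, I would use $d=\dpp+L\dpm$ together with $d\om=0$ to show $d\bigl(\tfrac{1}{(n-k+1)!}\om^{n-k+1}\w \cdots\bigr)$-type expressions integrate to zero; more precisely, replacing $B_k$ by $B_k+\dpp C_{k-1}$ changes the integrand by $\frac{1}{(n-k)!}\om^{n-k}\w(\dpp C_{k-1})\w B'_k$, and I would rewrite $\om^{n-k}\w \dpp C_{k-1}$ using $\dpp = d - L H^{-1}\La d$ on primitive forms (Lemma \ref{primdiff}) so that, after wedging with the top power of $\om$, only an exact term survives; then Stokes plus the $\dpm$-closedness of $B'_k$ kills it. The symmetric computation handles the $B'_k\mapsto B'_k+\dpm C'_{k+1}$ ambiguity. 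A cleaner route, which I would actually prefer, is to recognize $\frac{1}{(n-k)!}\om^{n-k}\w B_k = \pm\, \ss B_k$ by Weil's relation \eqref{ssrel} (with $r=0$, $s=k$), so the pairing is, up to sign, $\int_M \ss B_k\w B'_k = \int_M B_k\w *(\CJ^{-1}\ss)(\ss B'_k)$-type rearrangement — the point being to massage it into the Riemannian $L^2$ inner product.

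The key step is then: choose a compatible triple $(\om,J,g)$ and represent the classes by harmonic forms, using Theorem \ref{dppmellip}(iii). Given $[B_k]\in PH^k_\dpp(M)$ nonzero, take its $\dpp$-harmonic representative $B_k\in P\CH^k_\dpp(M)$. By Lemma \ref{pmequiv} (and the discussion following it), $\CJ B_k$ is a primitive $\dpm$-harmonic $k$-form, hence represents a class in $PH^k_\dpm(M)$. I would show that, up to a nonzero constant, $\langle B_k, \CJ B_k\rangle$ equals $\|B_k\|^2_{L^2}$ (or its conjugate), which is strictly positive since $B_k\neq 0$. Concretely: by \eqref{ssrel}, $\frac{1}{(n-k)!}\om^{n-k}\w B_k = (-1)^{k(k+1)/2}\,\ss B_k$, so
\[
\langle B_k,\CJ B_k\rangle = (-1)^{k(k+1)/2}\int_M \ss B_k\w \CJ B_k = (-1)^{k(k+1)/2}\int_M \CJ B_k\w \ss B_k,
\]
and using $* = \CJ\,\ss$ from \eqref{srels} together with the fact that $\CJ$ acts as a real scalar ($(\sqrt{-1})^{p-q}$ reality issues) on each $(p,q)$-component, I would identify this with $c\int_M B_k\w *\bar B_k = c\,\|B_k\|^2$ for a nonzero real constant $c$ (possibly depending on $k\bmod 4$). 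This exhibits, for every nonzero class in $PH^k_\dpp(M)$, a class in $PH^k_\dpm(M)$ pairing nontrivially with it, so the pairing is non-degenerate on the left; non-degeneracy on the right follows by the symmetric argument (or by the already-established isomorphism $PH^k_\dpp\cong PH^k_\dpm$ together with finite-dimensionality).

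The main obstacle I anticipate is bookkeeping the sign and reality factors coming from $\CJ$: the operator $\CJ=\sum_{p,q}(\sqrt{-1})^{p-q}\Pi^{p,q}$ is not real on individual forms, so identifying $\int_M \CJ B_k\w \ss B_k$ with a positive multiple of $\|B_k\|^2$ requires care — one must check that for a \emph{primitive} $k$-form the relevant combination of $(p,q)$-pieces reassembles into $B_k\w *\bar B_k$ with a single overall positive constant rather than a sign that varies with $(p,q)$. The safest way to dispatch this is to note that primitivity plus $2r+s=k$ pins down the bidegree structure enough that $\CJ$ restricted to $P^{k}$ (in the decomposition by $J$) acts with a controlled phase, and then to compare with the analogous middle-degree pairing computation in \cite{TY1} (Prop.\ 3.26 there), which is the $k=n$ specialization of exactly this statement; the general-$k$ case should follow the same template essentially verbatim. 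Everything else — well-definedness, the reduction to harmonic representatives, and positivity of the $L^2$ norm — is routine given the machinery already assembled.
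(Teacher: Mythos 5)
Your proposal follows essentially the same route as the paper: rewrite the pairing via Weil's relation \eqref{ssrel} and $*=\CJ\ss$ as an $L^2$-type inner product, use Lemma \ref{pmequiv} for well-definedness, and test a nonzero $\dpp$-harmonic $B_k$ against $\CJ B_k$. The reality/phase bookkeeping you flag as the main obstacle disappears if you apply the Weil relation to the $B'_k=\CJ B_k$ factor rather than to $B_k$, since then $\frac{1}{(n-k)!}\om^{n-k}\w B'_k=(-1)^{k(k+1)/2}*(\CJ^{-1}B'_k)=(-1)^{k(k+1)/2}*B_k$ and the integral is literally $(-1)^{k(k+1)/2}\|B_k\|^2$.
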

\begin{proof} Let us first interpret the integral.  Combining \eqref{ssrel} and \eqref{srels}, we obtain the well-known relation (see, e.g., \cite{Huy})
\bes\label{Weil}
*\,\frac{1}{r!}L^r B_k = (-1)^{\frac{k(k+1)}{2}} \frac{1}{(n-k-r)!} \, L^{n-k-r} \CJ(B_k)~.
\ees
Hence, the integral can be rewritten as 
\bes
\int_M \frac{1}{(n-k)!}\,\om^{n-k} \w B_k \w B'_k = (-1)^{\frac{k(k+1)}{2}} \int_M B_k \w * (\CJ^{-1}B'_k)~.
\ees
In this form and noting Lemma \ref{pmequiv}, it is clear that the pairing is well defined since the integral is independent of the choice of the representatives of the two cohomology classes.   Now to show non-degeneracy, we can choose $B_k$ and $B'_k$ to be the respective harmonic representatives.  In particular, let $B_k\in P\CH^k_{\dpp}(M)$ and $B'_k=\CJ B_k \in P\CH^{k}_{\dpm}(M)$.  We thus have for $B_k\neq 0$
\bes
B_k\otimes  B'_k \longrightarrow \int_M \frac{1}{(n-k)!}\,\om^{n-k} \w B_k \w B'_k  = (-1)^{\frac{k(k+1)}{2}}\| B_k\|^2  \neq 0~.
\ees
\end{proof}

\

\subsection{Local primitive Poincar\'e lemmas}

We now consider local Poincar\'e lemmas for the various cohomologies we have studied.  Except for cohomologies of degree zero forms and the cohomology $PH^1_\dpp$ and $PH^1_{d\dl}\,$, all other local primitive cohomologies turn out to be trivial.  At the end of this subsection, we shall use the $\dpm$-Poincar\'e lemma to demonstrate the equivalence of $PH_\dpm(M)$ with the \v Cech cohomology of $\CB'^n(M)$, where $\CB'^k(M)$ denotes the space of $\dpm$-closed primitive $k$-forms.  

On a open unit disk, the Poincar\'e lemma states that only $H^0_d(U)$ is non-empty.  By the symplectic star operation, there is also the $\dl$-Poincar\'e lemma 

\begin{lem}{\rm ($\dl$-Poincar\'e lemma).} \label{pdllemma}Let $U$ be an open unit disk in $\mathbb{R}^{2n}$ and $\om= \sum dx^{i} \w dx^{i+n}$, the canonical symplectic form.  If $A_k\in \CA^k(U)$ is $\dl$-closed and $k< 2n$, then there exists a $A'_{k+1}\in \CA^{k+1}(U)$ such that $A_k = \dl A'_{k+1}$.  
\end{lem}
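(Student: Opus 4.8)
The plan is to deduce the $d^\Lambda$-Poincaré lemma from the ordinary Poincaré lemma by conjugating with the symplectic star operator $*_s$, exploiting the identity $d^\Lambda = (-1)^{k+1} *_s\, d\, *_s$ already recorded in the excerpt. The key observation is that $*_s$ is an involution ($*_s*_s = 1$) and an isomorphism $\mathcal{A}^k(U) \to \mathcal{A}^{2n-k}(U)$, so it turns $d^\Lambda$-statements in degree $k$ into $d$-statements in degree $2n-k$, with the degree shift reversed.

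\begin{proof}[$d^\Lambda$-Poincar\'e lemma]
Suppose $A_k \in \mathcal{A}^k(U)$ satisfies $d^\Lambda A_k = 0$ with $k < 2n$. Set $\tilde A_{2n-k} = *_s A_k \in \mathcal{A}^{2n-k}(U)$. Using $d^\Lambda = (-1)^{k+1} *_s d\, *_s$ and $*_s *_s = 1$, the condition $d^\Lambda A_k = 0$ becomes $*_s d\, \tilde A_{2n-k} = 0$, hence $d\,\tilde A_{2n-k} = 0$. Since $2n - k \geq 1 > 0$, the ordinary Poincar\'e lemma on the disk $U$ provides $\tilde A'_{2n-k-1} \in \mathcal{A}^{2n-k-1}(U)$ with $\tilde A_{2n-k} = d\, \tilde A'_{2n-k-1}$. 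Now apply $*_s$ and set $A'_{k+1} = *_s \tilde A'_{2n-k-1} \in \mathcal{A}^{k+1}(U)$. Then
\[
d^\Lambda A'_{k+1} = (-1)^{k+2} *_s d\, *_s A'_{k+1} = (-1)^k *_s d\, \tilde A'_{2n-k-1} = (-1)^k *_s \tilde A_{2n-k} = (-1)^k A_k\,.
\]
Replacing $A'_{k+1}$ by $(-1)^k A'_{k+1}$ gives $A_k = d^\Lambda A'_{k+1}$, as required.
\end{proof}

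One should double-check the sign bookkeeping: the formula $d^\Lambda = (-1)^{k+1} *_s d\, *_s$ has a degree-dependent sign, and when it is applied to the form $A'_{k+1}$ of degree $k+1$ rather than to $A_k$ the exponent changes, which is the source of the extra $(-1)^k$ that gets absorbed at the end. The potential obstacle is not conceptual but purely a matter of tracking which degree each instance of the sign $(-1)^{\bullet+1}$ refers to; once that is pinned down the argument is a one-line transport of the classical statement through the involution $*_s$. No compactness or global input is needed since the ordinary Poincar\'e lemma already holds on $U$, and $*_s$ is a pointwise algebraic isomorphism built from $\omega$, which is the standard constant symplectic form on $U$.
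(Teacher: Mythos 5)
Your proof is correct and follows essentially the same route as the paper: conjugate by $\ss$ to convert $\dl$-closedness in degree $k$ into $d$-closedness in degree $2n-k$, apply the ordinary Poincar\'e lemma, and transport back, with the degree-dependent sign absorbed into the choice of primitive. The only (cosmetic) difference is that the paper inserts the factor $(-1)^k$ when invoking the Poincar\'e lemma rather than rescaling $A'_{k+1}$ at the end.
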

\begin{proof}  Let ${\tA}_{2n-k}= \ss\, A_k$.  Then $\dl A_k = (-1)^{k+1} \ss d \ss \, A_k  = (-1)^{k+1} \ss d {\tA}_{2n-k}=0$.  By the Poincar\'e lemma, we can write $\tA _{2n-k} = (-1)^k d \tA'_{2n-k-1}$, where the additional $(-1)^k$ factor is inserted for convenience.  Then, letting $A'_{k+1} = \ss \tA'_{2n-k-1}$, we have
\bes
A_k = \ss\, \tA_{2n-k} = (-1)^k \ss d {\tA'}_{2n-k-1}  = (-1)^k \ss d \ss A'_{k+1} = \dl A'_{k+1}~.
\ees
\end{proof}

\begin{prop}{\rm (Primitive Poincar\'e lemma).} \label{pdlemma}
Let $U$ be an open unit disk in $\mathbb{R}^{2n}$ and $\om= \sum dx^i \w dx^{i+n}$, the canonical symplectic form.  If $B_k\in P^k(U)$ is $d$-closed and $0<k\leq n$, then there exists a form $B'_{k-1}\in P'^{k-1}(U)$ such that $B_k = d B'_{k-1}$.
\end{prop}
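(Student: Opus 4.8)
The plan is to derive this from the ordinary (de~Rham) Poincar\'e lemma and the Lefschetz decomposition, with one auxiliary ingredient --- a local $\dpm$-Poincar\'e lemma --- that I would establish first.

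\emph{Reduction.} It suffices to produce a \emph{primitive} $B'_{k-1}\in\CB^{k-1}(U)$ with $dB'_{k-1}=B_k$: by \eqref{ddecomp}, $dB'_{k-1}=\dpp B'_{k-1}+L\,\dpm B'_{k-1}$, and since $B_k$ is primitive, uniqueness of the Lefschetz decomposition forces $L\,\dpm B'_{k-1}=0$; as $L$ is injective on $\CB^{k-2}(U)$ for $k\le n$, this gives $\dpm B'_{k-1}=0$, so $B'_{k-1}\in P'^{k-1}(U)$ automatically. (Applying the same reasoning to $B_k$ itself shows that a $d$-closed primitive form is both $\dpp$- and $\dpm$-closed; note the hypothesis cannot be weakened to ``$\dpp$-closed'', since already $PH^1_\dpp(U)\ne 0$, cf.\ Proposition~\ref{dpplemma}.)

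\emph{A local $\dpm$-Poincar\'e lemma.} I would prove: every $\dpm$-closed $G\in\CB^j(U)$ with $0\le j<n$ satisfies $G=\dpm G'$ for some $G'\in\CB^{j+1}(U)$. Since $\dl=-H\,\dpm$ on primitive forms (Lemma~\ref{primdiff}) with nonzero scalar in this range, $G$ is $\dl$-closed, so by the $\dl$-Poincar\'e lemma (Lemma~\ref{pdllemma}) $G=\dl\widetilde G$ with $\widetilde G\in\CA^{j+1}(U)$. Lefschetz-decompose $\widetilde G=\sum_r\tfrac1{r!}L^r\widetilde G_r$ via \eqref{Lefdec}; using \eqref{dlLr}, comparing the top Lefschetz components of $G=\dl\widetilde G$ (with $G$ primitive) shows the top piece $\widetilde G_R$ is $\dpm$-closed of degree $j+1-2R<j$. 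By the inductive case of the lemma (induction on degree; the base case $j=0$ is immediate) write $\widetilde G_R=\dpm\eta$, and subtract a suitable $\dl$-exact term from $\widetilde G$ to kill its level-$R$ component while leaving $\dl\widetilde G=G$; iterating, $\widetilde G$ becomes primitive and $G=\dl\widetilde G=-H\,\dpm\widetilde G$ is $\dpm$-exact.

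\emph{Proof of the proposition.} By the de~Rham Poincar\'e lemma, $B_k=dC$ for some $C\in\CA^{k-1}(U)$; Lefschetz-decompose $C=\sum_{r=0}^R\tfrac1{r!}L^rC_r$ with $C_r\in\CB^{k-1-2r}(U)$. Since $[d,L]=0$ and $d=\dpp+L\,\dpm$, the level-$(R+1)$ component of $dC$ is $\tfrac1{R!}L^{R+1}\dpm C_R$; as $dC=B_k$ is primitive, this vanishes, so $C_R$ is a $\dpm$-closed primitive $(k-1-2R)$-form of degree $<n$. If $R=0$, we are done by the Reduction. If $R\ge1$, use the $\dpm$-Poincar\'e lemma to write $C_R=\dpm\eta$ with $\eta\in\CB^{k-2R}(U)$ and pass to
\[
C\ \longmapsto\ C-d\!\left(\tfrac1{R!}L^{R-1}\eta\right),
\]
which is still a $d$-antiderivative of $B_k$; since $d(\tfrac1{R!}L^{R-1}\eta)=\tfrac1{R!}L^{R-1}\dpp\eta+\tfrac1{R!}L^R\dpm\eta$, its level-$R$ part $\tfrac1{R!}L^RC_R$ exactly cancels the top component of $C$, so the new potential has strictly smaller top Lefschetz level. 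Iterating reduces $C$ to a primitive $d$-antiderivative $B'_{k-1}$ of $B_k$, and by the Reduction $B'_{k-1}\in P'^{k-1}(U)$.

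\emph{Main obstacle.} The real work is the local $\dpm$-Poincar\'e lemma. It cannot be inferred from the symbol exactness of the elliptic complex~\eqref{exseq} (that complex is not locally exact --- e.g.\ $PH^1_\dpp(U)\ne0$), and its proof couples the symplectic-star reduction to Lemma~\ref{pdllemma} with the Lefschetz ``peeling'', whose termination rests on the non-trivial fact that the top Lefschetz obstruction is $\dpm$-closed of strictly smaller degree. One must also track degrees carefully throughout: all auxiliary primitive forms stay in degree $<n$, which is exactly what keeps $L$ injective and the $\dpm$-Poincar\'e lemma applicable, and the full $d$-closedness of $B_k$ (not just $\dpp$-closedness) is genuinely used --- it is what furnishes the de~Rham potential $C$.
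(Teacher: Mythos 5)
Your argument is correct, but it is a genuinely different route from the paper's. The paper's proof is a two-line homotopy argument: since the radial vector field $V=x^i\pa_i$ satisfies $\CL_V\om=2\om$, the standard radial homotopy operator $T$ (with $T\CL_V=id$, $dT=Td$) combined with Cartan's formula gives $B_k=d(T\,i_VB_k)$, and both $i_V$ and $T$ manifestly preserve primitivity, so the potential is primitive at once (and then $\dpm$-closed because its $d$ is primitive, exactly as in your Reduction step). You instead start from an arbitrary de~Rham potential $C$ and repair it by Lefschetz peeling, which forces you to first establish a local $\dpm$-Poincar\'e lemma; your proof of that lemma (via the $\dl$-Poincar\'e lemma, the formula \eqref{dlLr}, and induction on degree) is sound --- I checked that the relevant eigenvalues of $H+R$ are strictly positive throughout, so no peeling step degenerates --- and, importantly, it does not use Proposition~\ref{pdlemma}, so there is no circularity even though the paper derives its own $\dpm$-Poincar\'e lemma (Proposition~\ref{dpmlemma}) \emph{from} the primitive Poincar\'e lemma via Proposition~\ref{pddllemma}. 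What each approach buys: the paper's is far shorter and exhibits an explicit primitivity-preserving homotopy, while yours is purely algebraic over the de~Rham and $\dl$ lemmas, reverses the logical order by delivering Proposition~\ref{dpmlemma} as an independent byproduct, and makes visible exactly where full $d$-closedness (as opposed to $\dpp$-closedness) and the degree bounds enter. Either write-up would need the peeling bookkeeping (top-level extraction, injectivity of $L^{R+1}$ on $\CB^{k-2-2R}$, degree matching at level $R-1$) spelled out, but there is no gap.
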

\begin{proof}  
By the Poincar\'e lemma, there exists  a $(k-1)$-form with the property $B_k=d A_{k-1}$.  We give a standard construction of $A_{k-1}$ (see, e.g.,  \cite[Appendix 5]{Petersen}) and show that $A_{k-1}$ turns out to be primitive and $\dpm$-closed.

Start with the radial vector field $V= x^i \pa_i$.  Such a vector fields only scales differential forms.  For instance, $\CL_V \om = 2 \om$.   Hence, a primitive differential form remains primitive under a diffeomorphism generated by $V$.  Acting on a primitive $d$-closed form, we have 
\bes
\CL_V B_k = d\, i_V B_k ~.
\ees
Note that $i_V B_k$ is also primitive.  Moreover, since $\CL_V B_k$ remains primitive, this implies that $i_V B_k \in P'^{k-1}(U)$.   

We introduce the operator $T: \CA^k \to \CA^k$, which is inverse to the Lie derivative $\CL_V$ and commutes with $d$, 
\bes
T\; \CL_V = id~,\qquad \qquad d\, T = T \, d ~.
\ees 
It can be checked \cite[p. 385]{Petersen} that such a $T$ is given by
\bes
T \left(\frac{1}{k!}\, A_{i_1 \ldots i_k} \ dx^{i_1} \w \ldots \w dx^{i_k}\right) = \frac{1}{k!} \left(\int_0^1 t^{k-1}  A_{i_1 \ldots i_k}(tx)\, dt \right) dx^{i_1} \w \ldots \w dx^{i_k}~.
\ees
With these properties, we can write
\bes
B_k = (T\, \CL_V) B_k =  T\, d (i_V B_k) = d( T\, i_V B_k)~.
\ees
As mentioned, $i_V B_k$ is a primitive $(k-1)$-form, $L^{n-k+2} (i_V B_k) =0$.  Clearly, we also have $L^{n-k+2} \,T (i_V B_k)=0$ and so $B'_{k-1}:= T (i_V B_k)$ must also be primitive.  Last, since $dB'_{k-1}\in P^k(U)$, this implies $B'_{k-1}\in P'^{k-1}(U)$.
\end{proof}

\begin{prop}{\rm (Primitive $d\dl$-Poincar\'e lemma).}  \label{pddllemma}Let $U$ be an open unit disk in $\mathbb{R}^{2n}$ and $\om= \sum dx^i \w dx^{i+n}$, the canonical symplectic form.  If $B_k\in \CB^k(U)$ is $d$-closed and $0<k\leq n$, then there exists a $B''_{k}\in \CB^{k}(U)$ such that $B_k = d\dl B''_{k}$.  
\end{prop}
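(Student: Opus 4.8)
The plan is to obtain $B''_k$ by chaining together the two local Poincar\'e lemmas already established---the Primitive Poincar\'e lemma (Proposition \ref{pdlemma}) and the $\dl$-Poincar\'e lemma (Lemma \ref{pdllemma})---and then repackaging their output through the second-order operator $d\dl$. The only structural facts I will invoke are the decomposition $d = \dpp + L\,\dpm$ from \eqref{ddecomp}, the observation (made just before \eqref{dppmdef}) that $d\dl$ maps each Lefschetz component $\CL^{r,s}$ into itself, and the uniqueness of the Lefschetz decomposition.

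First I would apply Proposition \ref{pdlemma}: since $B_k$ is primitive, $d$-closed, and $0 < k \leq n$, there is a primitive $\dpm$-closed form $B'_{k-1}\in \CB'^{k-1}(U)$ with $B_k = dB'_{k-1}$; because $\dpm B'_{k-1} = 0$, the decomposition $d = \dpp + L\,\dpm$ collapses to $B_k = \dpp B'_{k-1}$.

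Next I would observe that $B'_{k-1}$ is $\dl$-closed: both $B'_{k-1}$ and $B_k = dB'_{k-1}$ are primitive, so $\dl B'_{k-1} = d\La B'_{k-1} - \La dB'_{k-1} = -\La B_k = 0$ (equivalently, $\dl = -H\dpm$ on primitive forms by Lemma \ref{primdiff}). As $k-1 < 2n$, Lemma \ref{pdllemma} supplies $A'_k \in \CA^k(U)$ with $B'_{k-1} = \dl A'_k$. Applying $d = \dpp + L\,\dpm$ to the primitive form $\dl A'_k = B'_{k-1}$ and using $\dpm\dl A'_k = \dpm B'_{k-1} = 0$, I obtain
\[ B_k \;=\; \dpp B'_{k-1} \;=\; \dpp\dl A'_k \;=\; d\dl A'_k - L\,\dpm\dl A'_k \;=\; d\dl A'_k. \]

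Finally, to produce a \emph{primitive} potential I would Lefschetz-decompose $A'_k = \sum_{r\geq 0}\frac{1}{r!}L^r C_{k-2r}$ with each $C_{k-2r}$ primitive. Since $d\dl$ preserves $\CL^{r,s}$, the term $d\dl\!\left(\frac{1}{r!}L^r C_{k-2r}\right)$ lies in $\CL^{r,k-2r}$; comparing this Lefschetz decomposition of $d\dl A'_k$ with that of the primitive form $B_k$ (which is concentrated in $\CL^{0,k}$) forces $B_k = d\dl C_k$, with all higher blocks vanishing. Hence $B''_k := C_k \in \CB^k(U)$ is as required. Every step is an immediate consequence of the quoted results, so there is no genuine obstacle; the only point demanding a little care is this last extraction of a primitive representative---it rests squarely on $d\dl$ respecting the Lefschetz grading---together with the bookkeeping in the low-degree cases $k=1,2$, where some summands of the Lefschetz decomposition are simply absent. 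Conceptually, the argument is the symplectic counterpart of deducing the local $\pa\bpa$-lemma from the $d$- and $\dl$-Poincar\'e lemmas.
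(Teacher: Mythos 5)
Your proof is correct and follows essentially the same route as the paper: apply the primitive Poincar\'e lemma to obtain $B_k = dB'_{k-1}$ with $B'_{k-1}$ primitive and $\dl$-closed, then the $\dl$-Poincar\'e lemma to get $B'_{k-1}=\dl A'_k$ and hence $B_k = d\dl A'_k$. The only divergence is the final extraction of a primitive potential, where the paper simply cites Lemma 3.9 of Paper I while you reprove that step on the spot using the uniqueness of the Lefschetz decomposition and the fact that $d\dl$ preserves each $\CL^{r,s}$ --- a valid, self-contained substitute.
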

\begin{proof}
By Proposition \ref{pdlemma}, since $B_k$ is $d$-closed, we can write $B_k= dB'_{k-1}$ for some $B'_{k-1}\in P'^{k-1}(U)$.  But since $\dl B'_{k-1} =0$, by the $\dl$-Poincar\'e lemma, there exists $A''_{k}\in \CA^k(U)$ such that $B'_{k-1}= \dl A''_k$ and hence $B_k = d\dl A''_k$.  But this implies by Lemma 3.9 of \cite{TY1} that there exists a primitive $k$-form $B''_k$ such that $B_k = d\dl B''_k$.    
\end{proof}

Lefschetz decomposition and the commutativity of $d\dl$ with the $sl(2)$ representation $(L, \La, H)$ \cite{TY1} then implies that $d\dl$-Poincar\'e lemma holds for all differential forms.

\begin{cor}{\rm (Local $d\dl$ lemma).} Let $U$ be an open unit disk in $\mathbb{R}^{2n}$ and $\om= \sum dx^i \w dx^{i+n}$, the canonical symplectic form.  If $A_k\in \CA^k(U)$ is $d+\dl$-closed and $k>0$, then there exists a $A'_{k}\in \CA^{k}(U)$ such that $A_k = d\dl A'_{k}$.  
\end{cor}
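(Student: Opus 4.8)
The plan is to derive it from the primitive $d\dl$-Poincar\'e lemma (Proposition \ref{pddllemma}) by Lefschetz-decomposing $A_k$ and treating its primitive summands one at a time. Since $A_k$ is homogeneous of degree $k$ while $d$ raises and $\dl$ lowers degree by one, the hypothesis $(d+\dl)A_k=0$ is equivalent to $dA_k=0$ together with $\dl A_k=0$. Write the unique Lefschetz decomposition $A_k=\sum_{r\ge\max(k-n,0)}\tfrac1{r!}L^rB_{k-2r}$ with each $B_{k-2r}\in\CB^{k-2r}(U)$ primitive; the primitive degrees that occur are automatically $\le n$, since for $k>n$ the summation range forces $r\ge k-n$ and hence $k-2r\le 2n-k\le n$.

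The crux is to upgrade ``$A_k$ is $(d+\dl)$-closed'' to ``every $B_{k-2r}$ is $d$-closed''. Applying $d$ to the decomposition by \eqref{ddecompex} and using uniqueness of the Lefschetz decomposition of the $(k+1)$-form $dA_k$, the equation $dA_k=0$ forces $\dpp B_{k-2r}+r\,\dpm B_{k-2r+2}=0$ for all $r$; applying $\dl$ by \eqref{dlLr}, the equation $\dl A_k=0$ forces $\dpp B_{k-2r}=(n-k+r)\,\dpm B_{k-2r+2}$ for all $r$. Subtracting gives $(n-k+2r)\,\dpm B_{k-2r+2}=0$, and since the scalar $n-k+2r$ equals $n-s+2\ge 2$ for the primitive form $B_s$, $s=k-2r+2\le n$, being acted on, this forces $\dpm B_s=0$ for every $s$; feeding this back into the first relation gives $\dpp B_{k-2r}=0$ as well. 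Hence $dB_{k-2r}=\dpp B_{k-2r}+L\,\dpm B_{k-2r}=0$ for every $r$.

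Now Proposition \ref{pddllemma} applies to every $B_{k-2r}$ of degree $0<k-2r\le n$, producing a primitive $B''_{k-2r}\in\CB^{k-2r}(U)$ with $B_{k-2r}=d\dl B''_{k-2r}$. Put $A'_k:=\sum_r\tfrac1{r!}L^rB''_{k-2r}$, a $k$-form. Since $d\dl$ commutes with $L$ --- equivalently, with the $sl(2)$-action $(L,\La,H)$; this follows from $[\dpp,L]=0$ (Lemma \ref{pmprop}) together with $d\dl\sim\dpp\dpm$ from \eqref{dppmdef}, cf.\ \cite{TY1} --- we conclude $d\dl A'_k=\sum_r\tfrac1{r!}L^r\,d\dl B''_{k-2r}=\sum_r\tfrac1{r!}L^rB_{k-2r}=A_k$.

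The one step requiring care --- and the genuine obstacle --- is a degree-zero Lefschetz component, which occurs precisely when $k$ is even (the summand $r=k/2$): then $B_0$ is a $d$-closed $0$-form, hence a constant $c$ since $U$ is connected, and $\tfrac{c}{(k/2)!}\,\om^{k/2}$ lies outside the image of $d\dl$ because $d\dl$ annihilates all of $\CL^{r,0}$ (the operator $\dpm$ kills degree-zero primitive forms, cf.\ Definition \ref{pmdef}). This summand has to be isolated and disposed of separately, either by observing $c=0$ in the case at hand or by recording the conclusion modulo a multiple of $\om^{k/2}$. Everything else is routine bookkeeping, the computational heart being the extraction of the $d$-closedness of the primitive components from the two equations $dA_k=0$ and $\dl A_k=0$.
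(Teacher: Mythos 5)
Your argument is precisely the one the paper intends: the paper's entire ``proof'' of this corollary is the single sentence preceding it (Lefschetz decomposition plus the commutativity of $d\dl$ with the $sl(2)$ action), and your write-up is the correct fleshing-out of that remark. In particular your extraction of $\dpp B_{k-2r}=\dpm B_{k-2r}=0$ for every Lefschetz component from the two equations $dA_k=0$ and $\dl A_k=0$ is right, as is the reassembly of the $d\dl$-primitives via $[d\dl,L]=0$.

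However, the issue you flag at the degree-zero Lefschetz component is not a loose end that can be tidied up; it is fatal to the corollary as printed, and you should say so rather than offer the escape ``observe $c=0$ in the case at hand,'' which is not available. Take $A_k=\tfrac{1}{(k/2)!}\,\om^{k/2}$ for any even $k$ with $0<k\le 2n$: this is $d$-closed and $\dl$-closed (hence $(d+\dl)$-closed, since the two pieces live in different degrees), it has $c=1$, and, exactly by the mechanism you describe, it cannot equal $d\dl A'_k$ for any $A'_k$, because $d\dl=-(H+2R+1)\dpp\dpm$ preserves each $\CL^{r,s}$ and annihilates $\CL^{r,0}$ ($\dpm$ kills primitive $0$-forms), so no element of $\im d\dl$ has a nonzero $\CL^{k/2,0}$-component. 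Thus the corollary holds as stated for odd $k$, but for even $k$ the correct conclusion is only that $A_k$ is $d\dl$-exact modulo $\mathbb{R}\cdot\om^{k/2}$ (equivalently, it holds verbatim iff the $\CL^{k/2,0}$-component of $A_k$ vanishes). The exclusion of $k=0$ in Proposition \ref{pddllemma} is the tell that the componentwise argument cannot reach this piece; the paper's one-line derivation overlooks it, and your careful version exposes it --- make the counterexample explicit instead of hedging.
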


\begin{prop}{\rm (Primitive $(\dpp+\dpm)$-Poincar\'e lemma).} \label{pdpdllemma} Let $U$ be an open unit disk in $\mathbb{R}^{2n}$ and $\om= \sum dx^i \w dx^{i+n}$, the canonical symplectic form.  Then,  $\dim PH^k_{d\dl}(U)= 0$ for $k=0$ and $2\leq k \leq n\,$, while  $\dim PH^1_{d\dl}(U) = 1\,$.
\end{prop}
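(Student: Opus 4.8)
The plan is to derive the whole statement from the primitive $d\dl$-Poincar\'e lemma (Proposition~\ref{pddllemma}), applied three times, exploiting the clean Lefschetz decomposition $dB_s=\dpp B_s+L\dpm B_s$ (\eqref{ddecomp}, Definition~\ref{pmdef}), the relations $\dpp^2=\dpm^2=0$ and $\dpp\dpm=-\dpm\dpp$ on $\CL^{r,s}$ with $r+s<n$ (Lemma~\ref{pmprop}), and $d\dl=-(H+2R+1)\dpp\dpm$ (\eqref{dppmdef}), so that $d\dl$ acts on $\CB^m$ as multiplication of $\dpp\dpm$ by $-(n-m+1)$. I would first dispose of $k=0$: here $PH^0_{d\dl}(U)=\CB^0(U)/\dpm\CB^1(U)$, and every $g\in\CB^0(U)$ equals $\dpm(n\,h\,dx^{\,n+1})$ with $h(x)=\int_0^{x^1}g\,dt$, since by \eqref{dpmP} one has $\dpm(n\,h\,dx^{\,n+1})=\La\,d(h\,dx^{\,n+1})=\pa_1 h=g$; hence this quotient vanishes.

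Next let $2\le k\le n$ and take $B_k\in\CB^k(U)$ with $\dpp\dpm B_k=0$. One checks $d(\dpp B_k)=-L\dpp\dpm B_k=0$ and $d(\dpm B_k)=\dpp\dpm B_k=0$, so $\dpp B_k\in\CB^{k+1}(U)$ and $\dpm B_k\in\CB^{k-1}(U)$ are $d$-closed primitive forms of degree in $(0,n]$ (with $\dpp B_n=0$ when $k=n$). Proposition~\ref{pddllemma} then supplies primitive forms $E_{k+1},F_{k-1}$ with $\dpp B_k=-(n-k)\dpp\dpm E_{k+1}$ and $\dpm B_k=-(n-k+2)\dpp\dpm F_{k-1}$, and I would set
\[
\widehat B_k:=B_k+(n-k)\dpm E_{k+1}-(n-k+2)\dpp F_{k-1}\in\CB^k(U).
\]
A short computation using only $\dpp^2=\dpm^2=0$ and $\dpm\dpp=-\dpp\dpm$ on $\CB^{k-1}$ gives $\dpp\widehat B_k=0=\dpm\widehat B_k$, so $\widehat B_k$ is a $d$-closed primitive $k$-form; a third application of Proposition~\ref{pddllemma} then yields $\widehat B_k=-(n-k+1)\dpp(\dpm G_k)\in\dpp\CB^{k-1}(U)$. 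Therefore $B_k=\widehat B_k-(n-k)\dpm E_{k+1}+(n-k+2)\dpp F_{k-1}\in\dpp\CB^{k-1}(U)+\dpm\CB^{k+1}(U)$, so $PH^k_{d\dl}(U)=0$.

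Finally $k=1$: the same scheme works whenever $\dpm B_1=0$ (then $dB_1=\dpp B_1$ is $d$-closed primitive of degree $2$, which is zero if $n=1$, and $\widehat B_1:=B_1+(n-1)\dpm E_2$ is a $d$-closed primitive $1$-form, hence lies in $\dpp\CB^0(U)$). In general, for $B_1\in\ker\dpp\dpm\cap\CB^1(U)$, \eqref{dppmP} gives $\dpp\dpm B_1=(1/n)\,d\La\,dB_1=0$, so $\dpm B_1=(1/n)\La\,dB_1$ is a constant, while every element of $\dpp\CB^0(U)+\dpm\CB^2(U)$ is $\dpm$-closed (since $\dpm\dpp f=-\dpp\dpm f=0$ for $f\in\CB^0$). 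Hence $B_1\mapsto\dpm B_1$ descends to a linear map $PH^1_{d\dl}(U)\to\mathbb R$; it is onto, as $B_1=-x^{\,n+1}dx^1$ has $\dpm B_1=1/n$, and injective by the case $\dpm B_1=0$, so $\dim PH^1_{d\dl}(U)=1$. The one delicate point is exactly this last step: for $k\ge2$ both Lefschetz components $\dpp B_k,\dpm B_k$ of $dB_k$ have positive degree and Proposition~\ref{pddllemma} applies to each, whereas for $k=1$ the component $\dpm B_1$ is a constant, which is $d\dl$-exact only when it vanishes, and this single obstruction is precisely the generator of $PH^1_{d\dl}(U)$.
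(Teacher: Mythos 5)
Your argument is correct, and while it rests on the same key ingredient as the paper's proof --- the primitive $d\dl$-Poincar\'e lemma (Proposition \ref{pddllemma}) --- the mechanics are genuinely different. For $2\le k\le n$ the paper applies Proposition \ref{pddllemma} only once, to $\dl B_k$, then corrects $B_k$ by $dB'_{k-1}$ to obtain a $\dl$-closed form, invokes the $\dl$-Poincar\'e lemma (Lemma \ref{pdllemma}) to write $B_k=-dB'_{k-1}+\dl A''_{k+1}$, and finally appeals to Lemma 3.20 of \cite{TY1} to convert this $d$-exact-plus-$\dl$-exact presentation of a primitive form into a $\dpp$-exact-plus-$\dpm$-exact one. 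You never leave the primitive calculus: you apply Proposition \ref{pddllemma} to each Lefschetz component $\dpp B_k$ and $\dpm B_k$ of $dB_k$ (both $d$-closed by Lemma \ref{pmprop}), subtract the resulting corrections to produce a $d$-closed primitive form $\widehat B_k$, and apply Proposition \ref{pddllemma} a third time; the constants $(n-k)$, $(n-k+2)$, $(n-k+1)$ coming from $d\dl=-(H+2R+1)\dpp\dpm$ are evaluated on the correct spaces, and the edge cases ($k=n$, where $\dpp B_n=0$ so $E_{n+1}$ may be taken to vanish, and $n=1$) are handled. What this buys is independence from Lemma 3.20 of Paper I and from the $\dl$-Poincar\'e lemma in this degree range, at the cost of two extra invocations of Proposition \ref{pddllemma}; the paper's route is shorter but imports an external lemma. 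Your treatment of $k=1$ is also a slight sharpening of the paper's: where the paper argues that a nonzero constant $\dl B_1$ obstructs exactness and that two forms with equal obstruction differ by an exact one, you package the obstruction as a well-defined surjective linear functional $[B_1]\mapsto\dpm B_1$ with trivial kernel and exhibit the explicit generator $-x^{n+1}dx^1$ (the tautological one-form the paper mentions only in passing). The $k=0$ case you settle by an explicit primitive antiderivative rather than via Lemma \ref{pdllemma}; both are fine.
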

\begin{proof}
For $k=0\,$, the $\dl$-Poincar\'e lemma (Lemma \ref{pdllemma}) implies that any $B_0\in P^0(U)$ can be expressed as $B_0 = \dpm B_1\,$ for some $B_1\in P^1(U)$.

Now for $2\leq k \leq n\,$, let $B_k\in P^k(U)$ be $d\dl$-closed.  Let $B_{k-1}= \dl B_k$.  Since $dB_{k-1}=0$, the $d\dl$-Poincar\'e lemma (Proposition \ref{pddllemma}) implies that
\bes
B_{k-1} = d\dl B'_{k-1} = - \dl (d B'_{k-1})~.
\ees
Notice that $B_k + d B'_{k-1}$ is then $\dl$-closed.  The $\dl$-Poincar\'e lemma then implies $B_k + d B'_{k-1} = \dl A''_{k+1}$, or equivalently, $B_k = -dB'_{k-1} + \dl A''_{k+1}$.  But then by Lemma 3.20 of \cite{TY1}, we can write $B_k = \dpp \hB'_{k-1} + \dpm \hB''_{k+1}$.

For $k=1$, let $B_1\in P^1(U)$ be $d\dl$-closed.  If $\dl B_1 =0$, then we can write $B_1=\dl A''_2 = \dpp \hB'_{0} + \dpm \hB''_{2}$, arguing similarly as in the $2\leq k \leq n$ case.  Now, if $\dl B_1 = B_0 \neq 0$, then such a $B_1$ cannot be $(\dpp+\dpm)$ exact, for any exact one-form $B_1 = \dpp \hB'_{0} + \dpm \hB''_{2}$ has the property
\bes
\dl B_1 = -n\, \dpm\dpp \hB'_{0} = n\, \dpp\dpm \hB'_{0} = 0~,
\ees
using \eqref{dldecomp}.  But with $\dl B_1 = B_0 \neq 0$ and $d\dl B_1 = d B_0 =0$, $B_0$ must be some constant c.  Furthermore, if 
both $B_1^A$ and $B_1^B$ are $d\dl$-closed and 
$\dl B_1^A = \dl B^B_1 =c\,$, then by the $\dl$-Poincar\'e lemma, their difference $B_1^A - B_1^B$ must be exact, i.e., $B_1^A - B_1^B= \dl A'' = \dpp \hB'_{0} + \dpm \hB''_{2}\,$.  Hence, we can conclude that $\dim PH^1_{d\dl}(U) =1\,$. 
\end{proof}

\begin{prop}{\rm (Primitive $\dpp$-Poincar\'e lemma).} \label{dpplemma}Let $U$ be an open unit disk in $\mathbb{R}^{2n}$ and $\om= \sum dx^i \w dx^{i+n}$, the canonical symplectic form.  Then $\dim PH^0_{\dpp}(U) =\dim PH^1_{\dpp}(U)=1$ and $\dim PH^k_{\dpp}(U)=0$ for $2\leq k < n\,$.
\end{prop}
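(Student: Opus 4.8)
The plan is to compute $PH^k_{\dpp}(U)$ by peeling off the primitive cohomology with the help of the two local lemmas already established, namely the ordinary Poincar\'e lemma applied to the complex $d$ and the primitive $d$-Poincar\'e lemma (Proposition \ref{pdlemma}). First I would settle the easy degrees. For $k=0$: $\dpp$ acting on functions is just $d$, so $\ker\dpp\cap\CB^0(U)$ is the constants and there is nothing to mod out by, giving $\dim PH^0_\dpp(U)=1$. For $k=1$: a primitive one-form $B_1$ is always $d$-closed iff it is $\dpp$-closed together with $\dpm B_1=0$, but actually $\dpp B_1 = dB_1 - L\dpm B_1$ lands in $\CB^2$ while $L\dpm B_1$ is a multiple of $\om$, so $\dpp B_1=0$ means the primitive part of $dB_1$ vanishes; I would instead argue directly that $\dpp$-closed one-forms modulo $\dpp(\text{functions})=d(\text{functions})$ reduces, via the decomposition $d=\dpp+L\dpm$, to the piece obstructed by the constant function $\dl$-issue, mirroring the $k=1$ analysis in Proposition \ref{pdpdllemma}; this yields $\dim PH^1_\dpp(U)=1$.

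The main work is the vanishing for $2\le k<n$. Here I would take $B_k\in\CB^k(U)$ with $\dpp B_k=0$ and try to write $B_k=\dpp B_{k-1}$ for some primitive $B_{k-1}$. The strategy is to relate $\dpp$-closedness to $d$-closedness: from $d=\dpp+L\dpm$ we have $dB_k = L\dpm B_k$, so $B_k$ need not be $d$-closed, but $dB_k$ is $L$ times a primitive form. I would consider the form $\dpp\dpm B_k\in\CB^k$ and use Lemma \ref{pmprop} together with the identities in Lemma \ref{primdiff} to reorganize; alternatively, and more cleanly, I expect to proceed by descending induction using the column structure of the pyramid. Concretely, since $\dpp B_k = 0$ and $\dpp$ on primitive forms equals $d - LH^{-1}\La d$, the condition $\dpp B_k=0$ says $dB_k = LH^{-1}\La\,dB_k$, i.e. $dB_k = L B_{k-1}'$ with $B_{k-1}'$ primitive and determined by $B_k$. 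I would then apply the ordinary Poincar\'e lemma to get $B_k = dA_{k-1}$ (note $B_k$ need not be $d$-closed, so this step requires care — one must first replace $B_k$), so actually the right first move is: because $d(dB_k)=0$ and $dB_k=LB'_{k-1}$ with $B'_{k-1}$ primitive and $d$-closed, the primitive Poincar\'e lemma gives $B'_{k-1}=dB''_{k-2}$ with $B''_{k-2}$ primitive; then $d(B_k - LB''_{k-2})=0$, and now the primitive Poincar\'e lemma (Proposition \ref{pdlemma}) applies to the $d$-closed primitive form $B_k$'s primitive part — but $B_k-LB''_{k-2}$ is no longer primitive. I would instead extract the primitive part $\tilde B_k$ of $B_k - L B''_{k-2}$ via \eqref{aprojb}; since the whole form is $d$-closed and the $sl_2$ structure is compatible, $\tilde B_k$ is $d$-closed and primitive, hence $\tilde B_k = dB'_{k-1}$ with $B'_{k-1}$ primitive by Proposition \ref{pdlemma}, and finally $\dpp B'_{k-1}$ recovers the primitive part of $dB'_{k-1}=\tilde B_k$, which should equal $B_k$ after checking the $L^r$-corrections cancel.

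The delicate point, and what I expect to be the main obstacle, is precisely this bookkeeping: showing that after subtracting off $L$-multiples of lower primitive forms and re-primitivizing, the leftover primitive $d$-closed form is genuinely cohomologous to the original $B_k$ in the $\dpp$-complex, i.e. that the ``correction terms'' $L^r B_{k-2r}$ one introduces are all themselves $\dpp$-exact or vanish. I would handle this by using the uniqueness of the Lefschetz decomposition (Lemma in Section 2.1) together with $[\dpp,L]=0$ from Lemma \ref{pmprop}(iii), which lets one track the $L^r$-components independently, combined with the identity $\dpp\dpm=\frac{1}{H+1}d\La d$ from Lemma \ref{primdiff} to control the obstruction $dB_k=LB'_{k-1}$. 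A cleaner alternative, which I would attempt in parallel, is to exploit the elliptic complex \eqref{exseq} and its symbol computation \eqref{ima}--\eqref{kerc}: the local exactness of the symbol sequence, promoted to an honest local Poincar\'e statement, would directly give vanishing for $2\le k<n$, with the failures at $k=0,1$ accounted for by the nontrivial kernel $e_1\wedge\beta_1$ term; but since the excerpt notes (footnote to Proposition \ref{ellipticsc}) that ellipticity does not automatically yield the Poincar\'e lemma, I would fall back on the explicit homotopy construction above, which is why the induction-and-reprimitivization route is the one I would write out in full.
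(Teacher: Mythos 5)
Your outline tracks the paper's proof up to the key reduction: from $\dpp B_k=0$ you correctly get $dB_k=L B'_{k-1}$ with $B'_{k-1}$ primitive and $d$-closed (for $2\le k<n$), then $B'_{k-1}=dB''_{k-2}$ by the primitive Poincar\'e lemma, and then $d(B_k-LB''_{k-2})=0$. But the step you take next contains a genuine error. You propose to extract the primitive part $\tilde B_k$ of $B_k-LB''_{k-2}$ and assert that ``since the whole form is $d$-closed and the $sl_2$ structure is compatible, $\tilde B_k$ is $d$-closed.'' This is false, and in fact self-refuting here: since $B_k$ and $B''_{k-2}$ are both primitive, $B_k + L(-B''_{k-2})$ \emph{is} the Lefschetz decomposition of $B_k-LB''_{k-2}$, so its primitive part is exactly $B_k$ --- which in case (2) satisfies $dB_k=LB'_{k-1}\neq 0$ and is precisely not $d$-closed. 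More generally, $d$-closedness does not descend to Lefschetz components, because $d$ maps $\CL^{r,s}$ into $\CL^{r,s+1}\oplus\CL^{r+1,s-1}$ and therefore couples adjacent components: $dA=0$ for $A=\sum \frac{1}{r!}L^rB_{k-2r}$ imposes $\dpp B_k=0$ together with relations mixing $\dpm B_{k-2r}$ and $\dpp B_{k-2r-2}$, not $dB_{k-2r}=0$ for each $r$. So the route ``re-primitivize, then apply Proposition~\ref{pdlemma} to the primitive part'' cannot work, and the subsequent claim that $\dpp B'_{k-1}$ ``recovers'' $B_k$ rests on this false premise. You flagged this bookkeeping as the delicate point but did not actually close it.

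The correct finish (and the paper's) is simpler than re-primitivization: apply the \emph{ordinary} Poincar\'e lemma to the non-primitive $d$-closed form, obtaining $B_k-LB''_{k-2}=d\tA_{k-1}$ for some (not necessarily primitive) $\tA_{k-1}$; Lefschetz-decompose $\tA_{k-1}=\tB_{k-1}+L\tB_{k-3}+\cdots$ and expand $d\tA_{k-1}$ using $d=\dpp+L\dpm$ and $[d,L]=0$; the $r=0$ component of the resulting identity, by uniqueness of the Lefschetz decomposition, reads $B_k=\dpp\tB_{k-1}$, which is exactly the desired $\dpp$-exactness. Your own remark about tracking $L^r$-components via uniqueness of the decomposition and $[\dpp,L]=0$ is the right instinct --- it just has to be applied to the potential $\tA_{k-1}$ produced by the ordinary Poincar\'e lemma, not to $B_k-LB''_{k-2}$ itself. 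Separately, your $k=1$ discussion gives the correct answer and the correct mechanism (the obstruction is the constant $\dpm B_1=c$, realized by the tautological one-form), but it is only gestured at; it needs the explicit argument that two $\dpp$-closed one-forms with the same constant differ by a $d$-exact, hence $\dpp$-exact, form.
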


\begin{proof}
The $k=0$ case is just the standard $d$-Poincar\'e lemma.

For $0<k<n\,$, let $B_k\in\CB^k(U)$ be $\dpp$-closed.  Then either (1) $dB_k=0$ or (2) $\dpp B_k =0$ but $d B_k = L B^1_{k-1}\neq 0$.  In the case of (1), it follows from the primitive Poincar\'e lemma (Proposition \ref{pdlemma}) that there exists a $B_{k-1}\in \CB^{k-1}(U)$ such that $B_k= \dpp B'_{k-1}$.  So we only need to consider case (2), which we will analyze in two parts. 

(2a) Let $2\leq k < n$.  Since $d B_k = L B^1_{k-1}$, we have 
$$d^2B_k = L \,dB^1_{k-1}=L B^{10}_k =0~.$$
Since $LB^{10}_k$ cannot be identically zero unless $k=n$, we find that $dB^1_{k-1}=0$.  Now by the primitive Poincar\'e lemma, $B^1_{k-1}= d \hB_{k-2}$.  Thus, $dB_k= LB^1_{k-1}$ implies
$$ d(B_k - L \hB_{k-2}) = 0  \qquad \Longrightarrow \qquad B_k - L \hB_{k-2} = d\tA_{k-1}~.$$
Lefschetz decomposing $\tA_{k-1} = \tB_{k-1} + L \tB_{k-3} + \ldots\,$, it is clear that $B_k = \dpp \tB_{k-1}$.

(2b) Let $k=1$.  If $dB_1 = L B^1_0$, then clearly $B_1 \neq dB_0 = \dpp B_0$.  But with $d^2 B_1 = L d B^1_0 =0$, which implies $dB^1_0 =0$, i.e., $B^1_0$ is a constant.  This gives us a one-parameter space for $PH^1_{\dpp}(U)$.  For if both $dB^A_1 = dB^B_1 = LB_0^1$, it follows from the $d$-Poincar\'e lemma that $B^A_1=B^B_1 + dB^{AB}_0 = B^B_1 + \dpp B^{AB}_0$.  Thus, $B^A_1$ and $B^B_1$ are in the same class in $PH^1_{\dpp}(U)$. 
\end{proof}

Let us note that the the non-trivial representative of $PH^1_{d\dl}(U)$ and $PH^1_{\dpp}(U)$ is just the tautological one-form.  We now turn to $PH_{\dpm}(U)$ which interestingly differs from $PH_{\dpp}(U)$.

\begin{prop} {\rm (Primitive $\dpm$-Poincar\'e lemma).} \label{dpmlemma}Let $U$ be an open unit disk in $\mathbb{R}^{2n}$ and $\om= \sum dx^i \w dx^{i+n}$, the canonical symplectic form.  Then $\dim PH^k_{\dpm}(U) =0$ for $0\leq k< n\,$.
\end{prop}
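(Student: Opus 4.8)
The plan is to induct on the degree $k$. The base case $k=0$ asserts exactly that $\dpm:\CB^1(U)\to\CB^0(U)$ is onto, and this is immediate: every $1$-form is primitive and $\dl=-H\dpm$ on primitive forms (Lemma \ref{primdiff}), so given $B_0$, the $\dl$-Poincar\'e lemma (Lemma \ref{pdllemma}) applied to the $\dl$-closed $0$-form $-n\,B_0$ produces a $1$-form $A_1$ with $\dl A_1=-n\,B_0$, i.e. $\dpm A_1=B_0$.

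For the inductive step, let $1\le k<n$ and let $B_k\in\CB^k(U)$ be $\dpm$-closed. Since $\dl B_k=-(n-k+1)\,\dpm B_k=0$, the $\dl$-Poincar\'e lemma gives $B_k=\dl A_{k+1}$ for some, in general non-primitive, $A_{k+1}\in\CA^{k+1}(U)$; Lefschetz-decompose it as $A_{k+1}=\sum_{r\ge0}\tfrac1{r!}L^rC^{(r)}$ with $C^{(r)}\in\CB^{k+1-2r}(U)$. The idea is to correct $A_{k+1}$ by a $\dl$-exact form, which does not change $\dl A_{k+1}=B_k$, until all $C^{(r)}$ with $r\ge1$ vanish; then $A_{k+1}$ is primitive and $B_k=\dl A_{k+1}=-(n-k)\,\dpm A_{k+1}$, so $B_k=\dpm\bigl(-(n-k)A_{k+1}\bigr)$ and we are done.

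To realize this I would expand $\dl A_{k+1}$ via \eqref{dlLr} and compare Lefschetz components with $B_k\in\CL^{0,k}$: uniqueness of the Lefschetz decomposition forces $\dpp C^{(r+1)}=(n-k+r)\,\dpm C^{(r)}$ for every $r\ge1$, and in particular $\dpm C^{(r_{\max})}=0$ for the top index $r_{\max}=\lfloor(k+1)/2\rfloor$. Then I run a downward recursion on $r$: at each stage the component to be removed is a $\dpm$-closed primitive form of degree $k+1-2r<k$, hence $\dpm$-exact with a primitive potential---by the inductive hypothesis when that degree is $\ge1$, and by the base case when it is $0$---and subtracting $\dl$ of $\tfrac1{r!}L^r$ times that potential annihilates the $r$-th component while, by the matching of scalar coefficients together with $\dpp\dpm=-\dpm\dpp$ in degrees $<n$ (Lemma \ref{pmprop}), leaving all lower-index components still $\dpm$-closed, so the recursion continues. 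After finitely many steps $A_{k+1}$ is primitive.

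The technical heart---where the bookkeeping must be done carefully---is precisely that last point: verifying that each correction preserves $\dpm$-closedness of the components of index $<r$, so the recursion never stalls, and that every degree and every eigenvalue of $H$, $R$, $H+R$ appearing (the factors $n-k+r$, $n-k$, $n-k+1$, and the $H+R$ shifts) stays in the range where the identities of Lemmas \ref{llhrelations}, \ref{primdiff}, \ref{pmprop} and of \eqref{dlLr}, \eqref{dldecomp} hold. A cosmetically different but equivalent route is to apply $\ss$ first: if $B_k$ is $\dpm$-closed and primitive then $\ss B_k=(-1)^{k(k+1)/2}\tfrac1{(n-k)!}L^{n-k}B_k$ is $d$-closed, hence $\ss B_k=d\eta$ by the ordinary Poincar\'e lemma, and one primitivizes $\ss\eta$ by the same recursion.
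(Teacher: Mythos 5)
Your proof is correct, but it takes a genuinely different route from the paper's. The paper does not induct on $k$ or primitivize the $\dl$-potential by hand: it splits into the cases $dB_k=0$ and $dB_k=B^0_{k+1}\neq 0$, and in both invokes the primitive $d\dl$-Poincar\'e lemma (Proposition \ref{pddllemma}) --- in the first case $B_k=d\dl B''_k\sim \dpp\dpm B''_k=-\dpm\dpp B''_k$ is at once $\dpm$-exact with primitive potential, and in the second one applies that lemma to the $d$-closed primitive form $B^0_{k+1}$ to get $\tB_{k+1}$ with $d(B_k-\dl\tB_{k+1})=0$, reducing to the first case and yielding $B_k=\dpm(-(n-k)\tB_{k+1}+B'_{k+1})$. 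So the paper's argument is short but leans on the machinery of Propositions \ref{pdlemma} and \ref{pddllemma} (the radial homotopy operator and a lemma from Paper I), whereas yours needs only the $\dl$-Poincar\'e lemma plus the Lefschetz recursion, hence is more self-contained at the price of the bookkeeping. That bookkeeping does close up: the component equations $\dpp C^{(r+1)}=(n-k+r)\,\dpm C^{(r)}$ are right ($L^r$ is injective on primitive $(k-2r)$-forms, so the components can be compared), the correction at stage $r$ carries the nonzero scalar $n-k-1+r\geq n-k>0$, and the new $(r-1)$-component $C^{(r-1)}+\tfrac{1}{n-k-1+r}\dpp D^{(r)}$ is $\dpm$-closed by combining the constraint $\dpm C^{(r-1)}=\tfrac{1}{n-k+r-1}\dpp\dpm D^{(r)}$ with $\dpm\dpp=-\dpp\dpm$ in the relevant degrees. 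One small imprecision in your write-up: the untouched components of index below $r-1$ are not $\dpm$-closed at that stage --- they merely satisfy the chain of constraints --- but each becomes $\dpm$-closed exactly when the component above it is zeroed (since $\dl$ of the corrected form is still $B_k$), which is all the recursion needs.
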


\begin{proof}
For $k=0$, this is just the $\dl$-Poincar\'e lemma.  For $0<k<n\,$, let $B_k\in\CB^k(U)$ be $\dpm$-closed.  Then either (1) $dB_k=0$ or (2) $\dpm B_k =0$, but $d B_k = B^0_{k+1}\neq 0\,$.  In case (1), it follows from the primitive $d\dl$-Poincar\'e lemma (Proposition \ref{pddllemma}) that there exists a $B'_{k+1}\in \CB^{k+1}(U)$ such that $B_k= \dpm B'_{k+1}$.  

For case (2), with $d B_k = B^0_{k+1}$, clearly $B^0_{k+1}$ is $d$-closed.  Hence, by the primitive $d\dl$-lemma, we can write $d B_k= B^0_{k+1}= d\dl \tB_{k+1}$.   This means that $d(B_k - \dl \tB_{k+1}) = 0\,$.  Applying the primitive $d\dl$-lemma again to $B_k - \dl \tB_{k+1}\,$, we have $B_k - \dl \tB_{k+1} = \dpp\dpm B'_{k}$ for some $B'_k\in \CB^k(U)\,$.  Letting $B'_{k+1}=- \dpp B'_k\,$ and using \eqref{dldecomp}, we find
$$B_k = -(n-k) \dpm \tB_{k+1} + \dpm B'_{k+1} = \dpm (-(n-k)\tB_{k+1} + B'_{k+1})~.$$  
\end{proof}

With $\CB'(M)$ denoting the space of primitive forms that are $\dpm$-closed, the $\dpm\,$-\:Poincar\'e lemma implies the exactness of the following sequence of primitive sheaves $\CB\,$:
\begin{equation}\label{dpmexact}
\xymatrix@1{
0\; \ar[r]&\; \CB'^n\; \ar[r]^{{\it i}} \;& \; \CB^n \;\ar[r]^{\dpm} &\;  \CB^{n-1}\; \ar[r]^{\dpm} &~ \ldots ~\ar[r]^{\dpm} & \;\CB^1 \;\ar[r]^{\dpm} & \;\CB^0\;\ar[r]^{\dpm} & \;0~.
}
\end{equation}
Since the $\CB^k$ allows for partition of unity, they are fine sheaves, and thus, the \v Cech cohomology ${\breve H}^l(M,\CB^k)=0$ for $l>0$.  Then, by standard arguments, we have the following:
\begin{thm} For $(M^{2n},\om)$ a compact symplectic manifold,
$$ PH^{k}_\dpm(M)\cong {\breve H}^{n-k}(M,\CB'^n)\qquad\qquad {\rm for~}0\leq k < n ~.$$
\end{thm}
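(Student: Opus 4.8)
The plan is to derive the isomorphism $PH^k_\dpm(M)\cong {\breve H}^{n-k}(M,\CB'^n)$ from the exact sequence of sheaves \eqref{dpmexact} by the standard abstract de Rham / resolution argument, breaking the long exact sequence into short exact sequences and using the vanishing of higher \v Cech cohomology of the fine sheaves $\CB^k$. First I would observe that since the $\dpm$-Poincar\'e lemma (Proposition \ref{dpmlemma}) says precisely that on any open disk the complex of primitive forms with differential $\dpm$ is exact except at the bottom, where its cohomology is $\CB'^n$ (the $\dpm$-closed primitive $n$-forms, equivalently the kernel of $i$), the sequence \eqref{dpmexact} is an exact sequence of sheaves on $M$, i.e. $\CB^n\xrightarrow{\dpm}\CB^{n-1}\xrightarrow{\dpm}\cdots\xrightarrow{\dpm}\CB^0\to 0$ is a resolution of the sheaf $\CB'^n$ by fine sheaves. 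Note the reversal of degree: the resolution ``starts'' in top primitive degree $n$ and descends, so the resolving sheaf in homological position $j$ is $\CB^{n-j}$, which is why the answer comes out as ${\breve H}^{n-k}$ rather than ${\breve H}^{k}$.

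The key steps, in order, are as follows. (1) Introduce the image sheaves $\CZ^j := \ker(\dpm\colon \CB^{j}\to\CB^{j-1}) = \im(\dpm\colon\CB^{j+1}\to\CB^{j})$ for $0\le j\le n-1$, with $\CZ^n := \CB'^n$; the equality of kernel and image here is exactly the local $\dpm$-Poincar\'e lemma. This yields short exact sequences of sheaves $0\to\CZ^{j+1}\to\CB^{j+1}\to\CZ^{j}\to 0$ for each $j$, together with $0\to\CB'^n\to\CB^n\to\CZ^{n-1}\to 0$. (2) Take the associated long exact sequence in \v Cech cohomology. Since each $\CB^{j}$ admits partitions of unity (being sheaves of smooth primitive forms, as noted before the theorem statement), it is a fine sheaf, so ${\breve H}^l(M,\CB^{j})=0$ for all $l>0$. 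Hence the connecting maps give isomorphisms ${\breve H}^l(M,\CZ^{j})\cong {\breve H}^{l+1}(M,\CZ^{j+1})$ for all $l\ge 1$, and for $l=0$ we get ${\breve H}^0(M,\CB^{j+1})\to{\breve H}^0(M,\CZ^{j})\to {\breve H}^1(M,\CZ^{j+1})\to 0$. (3) Iterate: ${\breve H}^{n-k}(M,\CB'^n)={\breve H}^{n-k}(M,\CZ^n)\cong {\breve H}^{n-k-1}(M,\CZ^{n-1})\cong\cdots\cong {\breve H}^{1}(M,\CZ^{k+1})$, and this last group is the cokernel of ${\breve H}^0(M,\CB^{k+1})\to {\breve H}^0(M,\CZ^{k})$. (4) Identify this cokernel with $PH^k_\dpm(M)$: ${\breve H}^0(M,\CZ^{k})$ is the space of global $\dpm$-closed primitive $k$-forms, i.e. $\ker\dpm\cap\CB^k(M)$, while the image of ${\breve H}^0(M,\CB^{k+1})={\breve H}^0(M,\CB^{k+1})$ under the map induced by $\dpm$ is $\dpm\CB^{k+1}(M)$, so the cokernel is exactly $(\ker\dpm\cap\CB^k(M))/\dpm\CB^{k+1}(M)=PH^k_\dpm(M)$. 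Here one uses that ${\breve H}^0$ of a sheaf is just its global sections, and that the map $\CB^{k+1}\to\CZ^k$ on global sections is surjective onto $\dpm\CB^{k+1}(M)$ by definition of $\CZ^k$ as the image presheaf's sheafification — but since $\CB^{k+1}$ is fine and the sequence is exact, the global section functor is exact enough in degree zero after accounting for the ${\breve H}^1$ term, which is precisely the point of the cokernel description.

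For degree $k=0$ one should double-check the edge of the induction: $PH^0_\dpm(M)=\ker(\dpm\colon\CB^0(M)\to 0)/\dpm\CB^1(M)=\CB^0(M)/\dpm\CB^1(M)$, which matches ${\breve H}^n(M,\CB'^n)$ obtained by running the iteration all the way down; and at the other extreme $k=n-1$ the isomorphism reduces directly to the cokernel statement with no iteration needed.

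The main obstacle is not conceptual but bookkeeping: one must be careful that the ``differential'' in \eqref{dpmexact} genuinely lowers degree, so the standard resolution argument (usually written for an increasing complex) must be transcribed with the index $j\mapsto n-j$ throughout, and one must verify at step (4) that the zeroth \v Cech cohomology of the image sheaf $\CZ^k$ really is the global $\dpm$-closed primitive $k$-forms and that the relevant map has image exactly $\dpm\CB^{k+1}(M)$ — this uses the local $\dpm$-Poincar\'e lemma once more to know that a global section of $\CZ^k$ that is locally $\dpm$-exact need not be globally so, which is exactly what makes ${\breve H}^1(M,\CZ^{k+1})$ (hence $PH^k_\dpm(M)$) potentially nonzero. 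Everything else is the formal machinery of resolutions by fine sheaves, which applies verbatim once the exactness of \eqref{dpmexact} (Proposition \ref{dpmlemma}) and fineness of the $\CB^j$ are in hand.
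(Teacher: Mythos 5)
Your proposal is correct and is precisely the ``standard arguments'' the paper invokes: the local $\dpm$-Poincar\'e lemma makes \eqref{dpmexact} a resolution of $\CB'^n$ by fine (hence acyclic) sheaves, and the dimension-shifting argument through the short exact sequences of kernel sheaves yields the isomorphism. The paper leaves these details implicit, so your write-up simply fills in the same route rather than taking a different one.
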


\

\subsection{Comparing $PH_\dppm(M)$ with $H_{d}(M)$ and $H_{\dl}(M)$  and the $\dpp\dpm$-lemma}

Let us note that all zero-forms and one-forms are primitive forms.  
Therefore, we may expect that the $PH^k_\dppm(M)$ may be equivalent to one of the standard cohomology at low degree.  Indeed, this is the case, as the following proposition shows.
\begin{prop}\label{deglow}
On a compact symplectic manifold $(M,\om)$, we have the following equivalence
\begin{align*}
PH^k_\dpp(M)= H^k_d (M)~,\quad PH^k_\dpm(M)= H^k_{\dl} (M)~, \qquad {\rm for~} k=0,1~, 
\end{align*}
where $H^k_d(M)$ is the de Rham cohomology and $H^k_{\dl}(M)= (\ker \dl \cap \CA^k(M)) / (\im \dl \cap \CA^k(M))\,$.
\end{prop}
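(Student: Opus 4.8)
The plan is to use that every $0$- and $1$-form on any manifold is primitive, so $\CB^0=\CA^0$ and $\CB^1=\CA^1$, and then to identify numerators and denominators of the relevant quotients via the dictionary $d=\dpp+L\dpm$ together with Lemma~\ref{primdiff}. Degree $0$ is immediate: on a function $df$ is a $1$-form, hence primitive, so $\dpp f=df$ and $\dpm f=0$, while $\dl f=d\La f-\La df=-\La df=0$ for the same reason; thus $PH^0_\dpp(M)=\ker d\cap\CA^0=H^0_d(M)$, and $PH^0_\dpm(M)=\CA^0/\dpm\CB^1$, $H^0_\dl(M)=\CA^0/\dl\CA^1$, which agree because Lemma~\ref{primdiff} gives $\dl=-H\dpm=-n\,\dpm$ on $\CB^1=\CA^1$ and $n\neq 0$.

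For degree $1$ I would first match numerators. For a $1$-form $B_1$, $dB_1=\dpp B_1+\om\w(\dpm B_1)$ is precisely the Lefschetz decomposition of the $2$-form $dB_1$, so $dB_1=0$ forces $\dpp B_1=0$; hence $\ker d\cap\CA^1\subseteq\ker\dpp\cap\CB^1$. The reverse inclusion is the crucial point and uses compactness: if $\dpp B_1=0$ then $dB_1=\om\w g$ for the function $g=\dpm B_1$, so $0=d^2B_1=\om\w dg$; since $k<n$ forces $n\ge 2$, wedging with $\om$ is injective on $1$-forms (Lefschetz), whence $dg=0$ and $g$ is locally constant, so $dB_1=g\,\om$; but then $g[\om]=0$ in $H^2_d(M)$, impossible on a compact symplectic manifold unless $g\equiv 0$ because $\int_M\om^n\neq 0$; hence $dB_1=0$. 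Together with the trivial equality $\dpp\CB^0=d\CA^0$ this gives $PH^1_\dpp(M)=H^1_d(M)$. For $\dpm$ the numerators agree at once since $\dl=-n\,\dpm$ on $\CB^1=\CA^1$.

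It then remains to match denominators for $PH^1_\dpm(M)$, i.e.\ to show $\dpm\CB^2=\im\dl\cap\CA^1=\dl\CA^2$. Using the Lefschetz splitting $\CA^2=\CB^2\oplus L\CA^0$ and \eqref{dlLr} one computes $\dl(B_2+f\om)=-(n-1)\,\dpm B_2+df$, so $\dl\CA^2=\dpm\CB^2+d\CA^0$ and the natural surjection $PH^1_\dpm(M)\twoheadrightarrow H^1_\dl(M)$ is injective iff $d\CA^0\subseteq\dpm\CB^2$. I would establish this by a dimension count rather than directly: by Proposition~\ref{dppmiso} and the degree-$1$ statement just proved for $\dpp$, $\dim PH^1_\dpm(M)=\dim PH^1_\dpp(M)=\dim H^1_d(M)$; on the other hand the symplectic star $\ss$ gives $H^1_\dl(M)\cong H^{2n-1}_d(M)$, which has dimension $\dim H^1_d(M)$ by Poincar\'e duality on the compact oriented $M$; a surjection between finite-dimensional spaces of equal dimension is an isomorphism.

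The main obstacle is the pair of reverse inclusions in degree $1$ — that a $\dpp$-closed $1$-form is $d$-closed, and that $d\CA^0\subseteq\dpm\CB^2$. Both genuinely need compactness: the first through $[\om]\neq 0$, the second (as organized above) through finite-dimensionality and Poincar\'e duality. Indeed they both fail on the disk, which is exactly why $\dim PH^1_\dpp(U)=\dim PH^1_{d\dl}(U)=1$ (Propositions~\ref{dpplemma} and \ref{pdpdllemma}). An alternative to the dimension count for the last step would be a Hodge-theoretic argument that every $\dpm$-harmonic $1$-form is coclosed (equivalently $\dpps$-closed, since $\dpps=\ds$ on $1$-forms), but I expect the dimension count to be the cleaner route.
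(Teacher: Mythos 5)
Your proposal is correct, and for $k=0$ and for the $\dpp$ statement at $k=1$ it follows essentially the paper's own route: identify $\dpp$, $\dpm$ with $d$, $\dl$ (up to nonzero constants) on the automatically primitive forms of degree $0,1$, and kill the obstruction $dB_1=\om\w c$ by noting that $c\,[\om]=0$ in $H^2_d(M)$ forces $c=0$ on a compact symplectic manifold. Where you genuinely diverge is the one delicate step, namely showing $\dl\CA^2\subseteq\dpm\CB^2$ after the common computation $\dl(B_2+f\om)=-(n-1)\,\dpm B_2+df$, i.e.\ that $df\in\dpm\CB^2$. The paper does this constructively: subtract the mean from $f$ so that $\ss f$ is an exact top form, write $f=\dl B'_1$, and then $df=d\dl B'_1$ is manifestly $\dpm$-exact via $d\dl=\dpm(H+2R+2)\dpp$, producing an explicit primitive two-form preimage. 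You instead run a dimension count: the surjection $PH^1_\dpm(M)\twoheadrightarrow H^1_\dl(M)$ must be an isomorphism because $\dim PH^1_\dpm=\dim PH^1_\dpp=\dim H^1_d$ (Proposition \ref{dppmiso} plus the $\dpp$ case just proved) while $H^1_\dl\cong H^{2n-1}_d$ under $\ss$ has the same dimension by Poincar\'e duality. This is sound and non-circular (Proposition \ref{dppmiso} precedes this statement and does not depend on it), and it is arguably slicker; the trade-off is that it leans on the Hodge-theoretic isomorphism $PH^1_\dpp\cong PH^1_\dpm$ and on duality for $H^*_\dl$, whereas the paper's argument is elementary and hands you the witness $B'_2$ with $df=\dpm B'_2$ explicitly. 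Your closing observation that both reverse inclusions fail on the disk, consistently with $\dim PH^1_\dpp(U)=1$, correctly isolates where compactness enters.
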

\begin{proof}
Note first that the action of $\dpm$ on zero- and one-forms is identical to that of $\dl$ modulo a non-zero constant (i.e., $-1/H$).  For $\dpp$, the action on zero-forms is identical to $d$.  So the equivalence at degree $k=0$ is trivial.  

For $PH^1_\dpp(M)$, note first that one forms that are $\dpp$-exact are also $d$-exact.  So the question is whether there are any $\dpp$-closed one-forms that are not $d$-closed.  Now, if $B_1\in \CB^1(M)$ is $\dpp$-closed, then we can have $dB_1= L B^1_0\,.$  Furthermore, $d^2=0$ implies $dB^1_0=0$, which means that $B^1_0=c\,$, a constant.  However, $c=0$ since otherwise the symplectic form would be trivial in de Rham cohomology.  Thus, we find that $PH^1_\dpp(M)= H^1_d (M)$, having used the compactness of $M$.

For $PH^1_\dpm(M)$, as mentioned, $\dpm$-closed one-forms are also $\dl$-closed.  Moreover, $\dpm$-exact one-forms are also trivially $\dl$-exact.  We shall now show that any $\dl$-exact one-forms are in fact also $\dpm$-exact.   Let  $B_1$ be $\dl$-exact; i.e.,
\begin{align*}
B_1&= \dl A_2= \dl (B_2 + L B_0)\\
&= -(n-1) \dpm B_2 + d B_0~.
\end{align*}
We therefore need to show that there exists a $B'_2\in \CB^2(M)$ such that $d B_0=\dpm B'_2\,$.  To do this, we can assume $\int_M B_0=0\,$, (since, if necessary, we can always subtract a constant factor from $B_0$ without affecting $d B_0$).  With $B_0$ integrating to zero and trivially $\dl$-close, $B_0$ must be $\dl$-exact, i.e., $B_0 = \dl B'_1\,$.  Clearly then, we now have
\begin{align*}
B_1&=-(n-1) \dpm B_2 + d( \dl B'_1) \\
&=\dpm\left[ -(n-1) B_2 + n\, \dpp B'_1\right]  ~,
\end{align*}
where we have used the relation $d\dl=  (H+2R+1)\dpm\dpp = \dpm (H+2R+2) \dpp\,$.   Therefore, we have shown that a $\dl$-exact one-form is also $\dpm$-exact, and this completes the proof.
\end{proof}

So at degree $k=0,1$, we have that $PH_\dpp^k(M)$ is equivalent to the de Rham cohomolgy and $PH^k_\dpm(M)$ to the $\dl$-cohomology.   At higher degree, the equivalence generally does not hold any longer.  To maintain some kind of equivalence, we can assume additional conditions on $M$.  A useful condition is the $d\dl$-lemma.  Recall that we say that the $d\dl$-lemma holds on a symplectic manifold $(M,\om)$ if it satisfies the following condition: Let $A\in \CA^*(M)$ be a differential form that is both $d$- and $\dl$-closed, then either it is not exact or else it must be $d-$, $\dl$-, and $d\dl$-exact.  Now since we are dealing with only primitive forms, it is not difficult to show that the $d\dl$-lemma for $\CA(M)$ is equivalent to the $\dpp\dpm$-lemma defined below for $\CB(M)$ on a compact symplectic manifold.

\begin{dfn}{ [$\dpp\dpm$-lemma]} On a symplectic manifold $(M,\om)$, let $B_k\in\CB^k(M)$ be $d$-closed.  We then say that the $\dpp\dpm$-lemma holds if the following properties are equivalent:
\begin{itemize}
\setlength{\itemsep}{0pt}
\item[{\rm(i)}] $B_k$ is $\dpp$-exact.
\item[{\rm(ii)}] $B_k$ is $\dpm$-exact if $k<n\,$.
\item[{\rm(iii)}] $B_k$ is $\dpp\dpm$-exact if $k>0\,$.
\end{itemize}
\label{dppmlem}
\end{dfn}

Interestingly, it has been shown by Merkulov \cite{Merkulov} and Guillemin \cite{Guillemin} (see also Cavalcanti \cite{Caval}) that a symplectic manifold exhibits the $d\dl$-lemma (or equivalently, the $\dpp\dpm$-lemma) if and only if the the strong Lefschetz property holds.  Here, strong Lefschetz is the property that the map in de Rham cohomology  $\varphi: H_{d}^k(M) \to H_{d}^{2n-k}(M)$ given by $A_k \to [\om]^{n-k} \w A_k$ is an isomorphism for all $k\leq n\,$.   Imposing the $\dpp\dpm$-lemma or the strong Lefschetz, we have the following property for $PH^k_{\dppm}(M)$:

\begin{prop}On a symplectic manifold $(M,\om)$, if the $\dpp\dpm$-lemma holds, or equivalently the strong Lefschetz property holds, then for $2 \leq k < n\,$, we have
\begin{align}
PH^k_\dpp(M) &= H^k_{d}(M) \cap \CB^k(M): = \dfrac{\ker d \cap \CB^k(M)}{d \CA^{k-1} \cap \CB^k(M)}~,\label{dpphd}\\
PH^k_\dpm(M) &= H^k_{\dl}(M) \cap \CB^k(M): = \dfrac{\ker \dl \cap \CB^k(M)}{\dl \CA^{k+1} \cap \CB^k(M)}~.\label{dpmhdl}
\end{align}
\end{prop}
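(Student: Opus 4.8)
The plan is to prove the two identities \eqref{dpphd} and \eqref{dpmhdl} by showing that, under the $\dpp\dpm$-lemma, the natural maps between the primitive cohomologies $PH^k_{\dppm}(M)$ and the ``primitive parts'' of the de Rham, resp.\ $\dl$, cohomologies are well defined and bijective. I will treat $PH^k_\dpp(M)$ first; the statement for $PH^k_\dpm(M)$ will follow by an entirely parallel argument (using the symplectic star $\ss$, or the $\CJ$-conjugation of Lemma \ref{pmequiv}, together with Proposition \ref{dppmiso} to transport everything to the $\dpm$ side).

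First I would set up the map $\Phi: PH^k_\dpp(M) \to H^k_d(M)\cap\CB^k(M)$. Given a $\dpp$-closed primitive $k$-form $B_k$ with $2\le k<n$, the argument in case (2a) of the $\dpp$-Poincar\'e lemma (Proposition \ref{dpplemma}), applied here globally, shows that $dB_k = L\,B^1_{k-1}$ with $B^1_{k-1}$ a $d$-closed primitive form; invoking the $\dpp\dpm$-lemma (part (i)$\Leftrightarrow$(ii), applied to $B^1_{k-1}$, valid since $k-1<n$) one can write $B^1_{k-1}=\dpm \hat B_{k}$ — but I actually want to follow the cleaner route: use the strong Lefschetz hypothesis to deduce that $B^1_{k-1}$, being $d$-exact after multiplication by $L$ (indeed $dB_k$ is $d$-exact), must already be $d$-exact in degree $k-1$, so $B_k$ differs from a genuinely $d$-closed primitive form by something in the image of $\dpp$. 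This shows every class in $PH^k_\dpp(M)$ has a $d$-closed primitive representative, giving $\Phi$; injectivity and surjectivity of $\Phi$ then reduce exactly to the equivalence (i)$\Leftrightarrow$(iii) of Definition \ref{dppmlem}: a $d$-closed primitive $B_k$ that is $d$-exact (i.e.\ zero in $H^k_d(M)\cap\CB^k$) is, by the $\dpp\dpm$-lemma, $\dpp\dpm$-exact, hence $\dpp$-exact, so $\Phi$ is injective; and conversely any $d$-closed primitive $B_k$ is trivially $\dpp$-closed, so $\Phi$ is surjective onto $H^k_d(M)\cap\CB^k(M)$. The denominator $d\CA^{k-1}\cap\CB^k$ rather than $d\CB^{k-1}$ is accounted for precisely because a $d$-exact primitive form is $d$-exact through a \emph{primitive} potential under the $\dpp\dpm$-lemma (this is the Lefschetz-decomposition observation used repeatedly above, e.g.\ in case (2a)).

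The main obstacle I anticipate is the careful bookkeeping at the ``boundary'' of case (2): ruling out that a $\dpp$-closed primitive $k$-form can fail to have a $d$-closed representative, which is exactly where strong Lefschetz (equivalently the $\dpp\dpm$-lemma) is essential and where the hypothesis $2\le k<n$ enters — for $k=1$ the extra tautological class appears (Proposition \ref{dpplemma}), and for $k=n$ the operator $\dpp$ degenerates, so those cases are genuinely excluded. Concretely, one must verify that when $dB_k=L\,B^1_{k-1}\ne 0$, strong Lefschetz forces $[B^1_{k-1}]=0$ in $H^{k-1}_d(M)$ (since $L\,B^1_{k-1}=dB_k$ is exact, and $L^{n-k+2}B^1_{k-1}=0$ by primitivity, so $[B^1_{k-1}]$ lies in the kernel of an iterated Lefschetz map that is injective in this degree range), and then chase the resulting potentials through their Lefschetz decompositions to land back inside $\CB^k$.

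Once \eqref{dpphd} is established, I would obtain \eqref{dpmhdl} by applying the symplectic star operator $\ss$: since $\ss$ intertwines $\dpp$ with $\tfrac{1}{H+R+1}\dpp\La$ and $L\dpm$ with $-(H+R)\dpm$ (the relations displayed after Lemma \ref{dppmdeff}), and intertwines $d$ with $\pm\dl$, it carries $\dpp$-closed/exact primitive forms to $\dpm$-closed/exact primitive forms and $d$-closed to $\dl$-closed; note also that the $\dpp\dpm$-lemma and strong Lefschetz are $\ss$-symmetric conditions. Transporting the isomorphism $\Phi$ through $\ss$ (and checking the image of $d\CA^{k-1}\cap\CB^k$ is $\dl\CA^{k+1}\cap\CB^k$, again using $\ss d = \pm\dl\ss$ on all forms, not just primitive ones) yields \eqref{dpmhdl}, completing the proof.
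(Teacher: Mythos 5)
Your treatment of \eqref{dpphd} is essentially workable and close in spirit to the paper's, but you abandon the efficient route just as you reach it. The paper's argument is exactly the one you mention and then set aside: since $B^1_{k-1}=\dpm B_k$ is $d$-closed and $\dpm$-exact, the $\dpp\dpm$-lemma in the form (ii)$\Rightarrow$(iii) of Definition \ref{dppmlem} (not (i)$\Leftrightarrow$(ii), which is vacuous here since $B^1_{k-1}$ is already exhibited as $\dpm$-exact) gives $B^1_{k-1}=\dpp\dpm B'_{k-1}$, whence $d(B_k+\dpp B'_{k-1})=L(B^1_{k-1}+\dpm\dpp B'_{k-1})=0$ produces a $d$-closed representative in one line. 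Your preferred ``cleaner route'' via strong Lefschetz only yields $B^1_{k-1}=dA_{k-2}$ with $A_{k-2}$ non-primitive, and the ``chase through the Lefschetz decompositions'' that you defer is precisely the nontrivial step: carrying it out, one extracts from $d(B_k-LA_{k-2})=0$ the relation $\dpm B_k=-\dpp\tilde{C}_{k-2}$ for the primitive part $\tilde{C}_{k-2}$ of $-A_{k-2}$, and one is back to invoking the $\dpp\dpm$-lemma anyway. The detour is completable but buys nothing. Your identification of kernels and images afterwards matches the paper's.

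The genuine gap is your derivation of \eqref{dpmhdl} by duality. The symplectic star is not an endomorphism of primitive forms: $\ss$ maps $\CB^k=\CL^{0,k}$ to $\CL^{n-k,k}\subset\CA^{2n-k}$, so it does not ``carry $\dpp$-closed/exact primitive forms to $\dpm$-closed/exact primitive forms''; it carries $H^k_d\cap\CB^k$ to a degree-$(2n-k)$ $\dl$-cohomology supported on $\CL^{n-k,k}$, not to $H^k_{\dl}\cap\CB^k$, and stripping off $L^{n-k}$ does not commute with $\dl$-exactness since $[\dl,L]=d\neq 0$. The alternative intertwiner you cite, $\CJ$ from Lemma \ref{pmequiv}, conjugates $\dpp$ into the \emph{metric adjoint} $\dpms(H+R)$, not into $\dpm$; it underlies the isomorphism of harmonic spaces in Proposition \ref{dppmiso} but does not respect $d$- or $\dl$-closedness, so it cannot transport the equality of quotient spaces \eqref{dpphd} to \eqref{dpmhdl}. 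The paper instead proves \eqref{dpmhdl} directly: on primitives $\ker\dpm=\ker\dl$, so only $\im\dl\,\cap\,\CB^k\subseteq\im\dpm$ needs an argument; given $B_k=\dl A_{k+1}$, either $dB_k=0$, in which case the $d\dl$-lemma gives $B_k=\dpm\dpp B'_k$ at once, or $dB_k=B^0_{k+1}\neq 0$, in which case applying the $\dpp\dpm$-lemma to the $d$-closed, $\dpp$-exact form $B^0_{k+1}$ yields $B^0_{k+1}=\dpp\dpm B'_{k+1}$, and the correction $B_k-\dpm B'_{k+1}$ reduces to the first case. You need an argument of this kind (or a genuinely cochain-level duality, which you have not supplied) to close the second half.
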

\begin{proof}Consider first $PH^k_\dpp(M)\,$.  If $B_k\in \CB^k(M)$ is $\dpp$-closed, then in general we have $dB_k = L B^1_{k-1}\,$.  Now since $dB^1_{k-1}=0$ and $B^1_{k-1}=\dpm B_k$, we can use the $\dpp\dpm$-lemma to write $B^1_{k-1}= \dpp\dpm B'_{k-1}\,$ for some $B'_{k-1}\in \CB^{k-1}(M)$.  Therefore, we have
$$d(B_k + \dpp B'_{k-1}) = L(B^1_{k-1} + \dpm \dpp B'_{k-1})= 0~,$$  
implying that in every cohomology class of $PH^k_\dpp(M)$, there must exist a representative that is also $d$-closed, having assumed of course that the $\dpp\dpm$-lemma holds. 

We now only need to show that for a primitive $d$-closed form $B_k$, if $B_k =\dpp B_{k-1}\,$, then there exists an $A_{k-1}\in \CA^{k-1}(M)$ such that $B_k = d A_{k-1}\,$.  But since $B_k$ is $d$-closed and $\dpp$-exact, it must also be $\dpp\dpm$-exact, or equivalently, $d\dl$-exact.  Therefore, we must have $B_k =d(\dl B'_k)$ for some $B'_k\in \CB^k(M)\,$, which  completes the proof of \eqref{dpphd}.

Consider now $PH^k_{\dpm}(M)\,$.  Acting on primitive forms, $\dl: \CB^k(M) \to \CB^{k-1}(M)$.  Therefore, $ \dpm$-closedness is in fact equivalent to $\dl$-closedness.  Moreover, a $\dpm$-exact form is also $\dl$-exact, but the converse is generally not true.  Thus we have to show that a $\dl$-exact form can also be expressed as a $\dpm$-exact form if the $\dpp\dpm$-lemma holds. 

Let $B_k\in \CB^k(M)$ be $\dl$-exact (i.e., $B_k = \dl A_{k+1}$).  Then, since $B_k$ is $\dl$-closed (and equivalently, $\dpm$-closed), we must either have (i) $d B_k=0\,$ or (ii) $d B_k = B^0_{k+1}\,$.  For case (i), $B_k$ satisfies the criteria for the $d\dl$-lemma, and so we can immediately write, $B_k = \dpm(\dpp B'_k)\,$, for some $B'_k\in \CB^k(M)\,$, noting again that $d\dl \sim \dpp\dpm\,$.  For case (ii), clearly $dB^0_{k+1}=0\,$; therefore, we can apply the $\dpp\dpm$-lemma to $B^0_{k+1}$ and write $B^0_{k+1} = \dpp\dpm B'_{k+1}\,$, for some $B'_{k+1}\in \CB^{k+1}(M)\,$.   We thus obtain
\bes
d(B_k - \dpm B'_{k+1})=0 ~,
\ees
which reduces the problem to case (i).  Thus applying the $\dpp\dpm$-lemma again, we find that $B_k = \dpm B'_{k+1} + \dpm (\dpp B'_k)\,$, for some $B'_k\in \CB^k(M)\,$.
\end{proof}

As mentioned in the above proof, $\ker \dpm = \ker \dl \cap \CB\,$, but in general $\im \dpm \subset \im \dl \cap \CB\,$.  This thus give a lower bound on the dimension of the $PH^k_\dppm(M)\,$.

\begin{prop}
On a compact symplectic manifold $(M,\om)$, 
$$\dim PH^k_\dpp(M) = \dim PH^k_\dpm(M) \geq \dim \left(H^k_{\dl}(M) \cap \CB^k(M)\right)~.$$
\end{prop}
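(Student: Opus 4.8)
The equality $\dim PH^k_\dpp(M)=\dim PH^k_\dpm(M)$ is already Proposition~\ref{dppmiso}, so the plan is to establish the inequality $\dim PH^k_\dpm(M)\geq \dim\!\big(H^k_{\dl}(M)\cap\CB^k(M)\big)$ by exhibiting a \emph{surjective} linear map
\bes
PH^k_\dpm(M)\ \longrightarrow\ H^k_{\dl}(M)\cap\CB^k(M)~,
\ees
namely the map sending the $\dpm$-cohomology class of a primitive form to its $\dl$-cohomology class. Surjectivity of a linear map forces the source to have dimension at least that of the target, which together with Proposition~\ref{dppmiso} gives the claim.

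To see the map is well defined, I would first match the numerators. By Lemma~\ref{primdiff}, acting on a primitive $k$-form one has $\dl=-H\dpm$, and the eigenvalue of $H$ on the $(k-1)$-form $\dpm B_k$ is $n-k+1>0$; hence a primitive $k$-form is $\dl$-closed if and only if it is $\dpm$-closed, i.e. $\ker\dl\cap\CB^k(M)=\ker\dpm\cap\CB^k(M)$. This is precisely the observation recorded in the remark preceding the statement.

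Next I would check the denominators nest, $\dpm\CB^{k+1}(M)\subseteq \dl\CA^{k+1}(M)\cap\CB^k(M)$: for $B'_{k+1}\in\CB^{k+1}(M)$ the same relation gives $\dl B'_{k+1}=-(n-k)\,\dpm B'_{k+1}$, and $n-k>0$ since $k<n$, so $\dpm B'_{k+1}=-\tfrac{1}{n-k}\,\dl B'_{k+1}\in \dl\CA^{k+1}(M)$, while it is primitive because $\dpm$ preserves primitivity. Thus the identity-on-representatives map descends to cohomology. It is surjective because any class in $H^k_{\dl}(M)\cap\CB^k(M)$ has, by definition, a primitive $\dl$-closed representative, which by the numerator identification is automatically $\dpm$-closed and so represents a class in $PH^k_\dpm(M)$ mapping onto it.

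The computation is short and no genuine obstacle is anticipated; the only place to be slightly careful is the bookkeeping of the $H$-eigenvalues in the two invocations of $\dl=-H\dpm$ (on $k$-forms versus on $(k+1)$-forms), in particular confirming that the constants $n-k+1$ and $n-k$ are nonzero throughout the range $0\leq k<n$, which they are.
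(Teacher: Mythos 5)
Your argument is correct and is essentially the paper's own: the paper justifies this proposition by the remark that $\ker\dpm=\ker\dl\cap\CB$ while $\im\dpm\subseteq\im\dl\cap\CB$ on primitive forms, which is exactly the numerator/denominator comparison you carry out (with the constants $n-k+1$ and $n-k$ made explicit via $\dl=-H\dpm$), combined with Proposition~\ref{dppmiso} for the equality.
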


\

\

\section{Example: A symplectic nilmanifold}

We can explicitly calculate and compare the different primitive cohomologies on a six-dimensional compact symplectic nilmanifold.  Let $M=M^6$  be the nilmanifold of type $(0,0,0,12,14,15+23+24)\,$.  This means that there exists a basis of one-forms $\ea,\eb,\ldots,\ef$ on $M$ with the following alegbra:
\begin{align}
d\ea & = 0~,  \qquad\qquad\qquad  d\ed = \ea \w \eb~, \notag\\
d\eb & = 0~,  \qquad\qquad\qquad d\eee  = \ea \w \ed~,  \label{cotvect}\\
d\ec & = 0~,  \qquad\qquad\qquad d\ef  =  \ea\w\eee + \eb\w \ec + \eb\w \ed ~.\notag
\end{align}
This nilmanifold has the Betti numbers $(b_1, b_2, b_3) = (3,5,6)$ \cite{Salamon}.  For our calculation, let us take the symplectic form to be
\be\label{exsymp}
\om = \ea \w \ef + \eb \w \eee - \ec \w \ed ~.
\ee
It can be easily checked that $\om$ of \eqref{exsymp} is both $d$-closed and non-degenerate, as required.  
In Table \ref{extable}, we give the basis elements for the cohomologies $H_{d}(M)\,$, $H_{\dl}(M)$, $PH_\dppm(M)$, and $PH_{d+\dl}(M)\,$.
\begin{table}
{\renewcommand{\arraystretch}{1.25} 
\begin{tabular}{c |c | l | l | l }
 & $k=0$ &~~$k=1$ & ~~~~~~~~~ $k=2$ & ~~~~~~~~~$k=3$ \\
\hline
$H^k_{d}$ &  $1$ & $\ea,\eb,\ec$&$ \om,e_{13}, (e_{23}-e_{24}),  $ &  $\om \w \eb, \om \w \ec, (e_{315}+e_{415}), e_{425},$ \\
& &  &$(e_{15}-e_{23}),(e_{26}-e_{45}) $& $(e_{534}+ e_{623}), (e_{516}+e_{534}-2\,e_{263}+e_{624})$ \\
\hline
$H^k_{\dl}$& $1$ &$\ed,\eee,\ef$& $\om, e_{46}, (e_{15}-e_{23}),$ &$\om \w \eb, \om \w \ec, (e_{315}+e_{415}), e_{416},$ \\ 
& & &$(e_{26}-e_{45}),(e_{35}+e_{45})$  & $(e_{516}+ e_{623}), (e_{516}+e_{534}-2\,e_{263}+e_{624})$ \\
\hline
$PH^k_{\dpp}$& $1$ & $\ea,\eb,\ec$& $e_{13},\! (e_{23}\!-\!e_{24}),\! (e_{15}\!-\!e_{23}),$ &\\
& & &$(e_{26}-e_{45}),(e_{35}-e_{45})$  &  \\
\hline
$PH^k_{\dpm}$& $1$ & $\ed,\eee,\ef$& $e_{24}, e_{46}, (e_{15}-e_{23}),$ &\\
& & &$(e_{26}-e_{45}),(e_{35}+e_{45})$  &  \\
\hline
$PH^k_{d+\dl}$ & $1$ & $ \ea,\eb,\ec$  &$  e_{12},e_{13},e_{14}, e_{24},  $& $e_{315}, e_{415}, (e_{125}+e_{134}), (e_{126}-e_{234}), $\\
& &  &$(e_{15}-e_{23}), (e_{26}-e_{45}),$ &   $(e_{316}-e_{325}+2\,e_{416}-2\,e_{425}),$\\
& &  &$(e_{15}+e_{23}+e_{24})$ & $(e_{516}+e_{534}-2\,e_{263}+e_{624}) $\\
\end{tabular}}

\ 
 \caption{Bases for $H_{d}$, $H_{\dl}$, $PH_{\dpp}$, $PH_{\dpm}$, and $PH_{d+\dl}$
of the six dimensional nilmanifold in terms of exterior products of the one-forms $e_i$ \eqref{cotvect} and symplectic form $\om$ \eqref{exsymp}.\label{extable}}
\
 \end{table}

Clearly, the $\dpp\dpm$-lemma generally does not hold for this nilmanifold.   Take for instance, $e_{12}$.  It is primitive, $d$-closed, and $\dpp$-exact, i.e. $e_{12} = \dpp e_4\,$.  Moreover, it is also $\dpm$-exact, since $e_{12}=\dpm(e_{416}-e_{425})\,$.  However, it is not $\dpp\dpm$-exact. 

Notice for $k=2$, $\dim PH^2_{\dpp}(M) = \dim [H^2_{d} \cap \CB^2(M)]+1$.  The difference is due to the presence of the two-form $(e_{35}-e_{45})$, which is $\dpp$-closed but not $d$-closed.  Explicitly, we have 
$$ d(e_{35}-e_{45}) = e_{134} - e_{125} = - \om \w e_1 ~.$$
Hence, we see that the map $\varphi: H^1(M)\to H^3(M)$ given by $[\om] \w$ is not injective for this nilmanifold with $\om$ of \eqref{exsymp}.  
Similarly, $\dim PH^2_{\dpm}(M) = \dim[H^2_{\dl} \cap \CB^2(M)] +1\,$.  This is due to the fact that $e_{24}\in \im \dl$ but not in the image of $\dpm$.  Specifically, we have
$$ \dl [(e_{625}+e_{634}) + \om \w \ef] = 2\, e_{24}~,$$  
where the presence of the non-primitive term $\dl (\om \w \ef) = \dpp \ef = e_{15}+ e_{23}+ e_{24}$ is essential.  Effectively, we have a primitive $d$-closed two-form $B_2=e_{15}+ e_{23}+ e_{24} = \dpp \ef \neq \dpm B_3\,.$ 

Now, we could have chosen a different symplectic form.  For instance, consider the same nilmanifold but with the symplectic form given by
\be
\om' = \ea \w \ec + \eb \w \ef - \ed \w \eee~.
\ee

In this case, it is easy to show that the map $\varphi: H^1(M) \to H^3(M)$ now using $[\om'] \w$ is injective.  Furthermore, any closed primitive two-form, if $\dpp$-exact, is also $\dpm$-exact.  In this case, we have $PH^2_\dpp(M,{\om'}) = H^2_{d}\cap \CB^2(M,\om')$ and $PH^2_\dpm(M,{\om'}) = H^2_{\dl}\cap \CB^2(M,\om')\,$.  And moreover, we have 
$$\dim PH^2_{\dppm}(M,\om) = \dim PH^2_{\dppm}(M,\om') +1~,$$ 
which shows that $PH^2_{\dppm}(M)$ can vary as the de Rham class of the symplectic form is varied.

\

\vskip 1cm
Department of Mathematics, University of California, Irvine, CA 92697 USA\\
{\it Email address:}~{\tt lstseng@math.uci.edu}
\vskip .5 cm
\noindent 
{Department of Mathematics, Harvard University\\
Cambridge, MA 02138 USA}\\
{\it Email address:}~{\tt yau@math.harvard.edu}

\end{document}